\newtheorem{theorem}{Theorem}[section]
\newtheorem{lemma}[theorem]{Lemma}
\newtheorem{corollary}[theorem]{Corollary}
\newtheorem{propos}[theorem]{Proposition}
\theoremstyle{definition}
\newtheorem{definition}[theorem]{Definition}
\theoremstyle{remark}
\newtheorem{remark}[theorem]{Remark}
\numberwithin{equation}{section}
\newcommand{\semidirect}{\rtimes}
\newfont{\bb}{msbm10}
\def\Bbb#1{\mbox{\bb #1}}
\def\Bbb#1{\mbox{\bb #1}}
\def\N{ {\Bbb N}}
\begin{document}


\title{Stability of a functional equation of Deeba on semigroups}
\author{VALERI\u I A. FA\u IZIEV}
\address{Fa\u iziev:
Tver State Agricultural Academy,
Tver Sakharovo, Russia}
\email{vfaiz@tvcom.ru}
\author{PRASANNA K. SAHOO}
\address{Sahoo: Department of Mathematics,
University of Louisville,
Louisville, Kentucky 40292 USA}
\email{sahoo@louisville.edu}

\begin{abstract}
Let $S$ be a semigroup and $X$ a Banach space. The functional
equation $\varphi (xyz)+ \varphi (x) + \varphi (y) + \varphi (z) =
\varphi (xy) + \varphi (yz) + \varphi (xz)$ is said to be stable
for the pair $(X, S)$ if and only if $f: S\to X$ satisfying $\|
f(xyz)+f(x) + f(y) + f(z) - f(xy)- f(yz)-f(xz)\| \leq \delta $ for
some positive real number $\delta$ and all $x, y, z \in S$, there
is a solution $\varphi : S \to X$ such that $f-\varphi$ is
bounded. In this paper,  among others, we prove the following
results: 1) this functional equation, in general, is not stable on
an arbitrary semigroup; 2) this equation is stable on periodic
semigroups; 3) this equation is stable on abelian semigroups; 4)
any semigroup with left (or right) law of reduction can be
embedded into a semigroup with left (or right) law of reduction
where this equation is stable. The main results of this paper
generalize the works of Jung \cite{jung1}, Kannappan \cite{ka},
and Fechner \cite{fe}.
\end{abstract}


\maketitle{}
\markboth{Valeriy A. Fa\u iziev and Prasanna K. Sahoo}
{Stability of a functional equation of Deeba on semigroups}

\keywords{{\bf Keywords and phrases.} Bimorphism,
Drygas functional equation, embedding, free groups, quadratic functional equation, periodic semigroup,
semidirect product,  stability of functional equation, and wreath product.}

\vskip 2mm
\subjclass{{\bf 2000 Mathematics Subject Classification.} Primary 39B82.}


\section{Introduction}

If $f : \Bbb{V} \to X$ is a function from a normed vector space $\Bbb{V}$ into a Banach space $X$,
and $\| f(x+y) - f(x) - f(y) \| \leq \delta$,
Hyers \cite{hy}, answering a question of Ulam \cite{ul},
proved that there exists an additive function $A : \Bbb{V} \to X$ such that
$\| f(x) - A(x) \| \leq \delta$. Taking this result into account, the additive
Cauchy functional equation is said to stable in the sense of Hyers-Ulam
on $(\Bbb{V}, X)$ if for each function $f : \Bbb{V} \to X$ satisfying the inequality
$\| f(x+y) - f(x) - f(y) \| \leq \delta$ for some $\delta \geq 0$ and for all
$x, y \in \Bbb{V}$ there exists an additive function $A : \Bbb{V} \to X$ such that
$f-A$ is bounded on $\Bbb{V}$. Since then, the stability problems of various
functional equations have been studied by many authors (see the survey paper
\cite{ra3} and references therein).
Among them, Skof \cite{sk} first considered the
Hyers-Ulam stability of the quadratic functional equation
\begin{equation}\label{eq:quad1.0}
f(xy) + f(xy^{-1})= 2 f(x) + 2 f(y)
\end{equation}
where $f$ maps a group $G$ to an abelian group $H$. As usual, each
solution of equation \eqref{eq:quad1.0} is called a quadratic function.
But Skof restricted herself to studying the case where $f$ maps a normed
space to a Banach space. In \cite{ch} Cholewa noticed that the theorem of Skof's
is true if the relevant domain in replaced by an abelian group. The results
of Skof and Cholewa were further generalized by Czerwik \cite{cz1}.
Further works on stability of the quadratic functional equation can be
found in Feny\H{o} \cite{fe},
Czerwik \cite{cz2}, Czerwik and Dlutek \cite{cz3}, \cite{cz4}, Ger \cite{ger}, Jung \cite{jung2},
Jung and Sahoo \cite{jung3}, and Rassias \cite{ra2}.

\medskip
Let $G$ be a group and $X$ and $Y$ be any two arbitrary Banach spaces over reals.
Faiziev and Sahoo \cite{fs1} proved that the quadratic functional equation is stable
for the pair $(G, X)$ if and only if it is stable for the pair $(G, Y)$. In view of this result
it is not important which Banach space is used on the range. Thus one may
consider the stability of the quadratic functional equation on the pair $(G, \Bbb{R} )$.
Faiziev and Sahoo \cite{fs1} proved that quadratic functional equation is not stable on
the pair $(G, \Bbb{R} )$ when $G$ is any arbitrary group. It is well known (see Skof \cite{sk}
and Cholewa \cite{ch}) that the quadratic functional equation is stable on the pair
$(G, \Bbb{R} )$ when $G$ is an abelian group. Thus it is interesting to know on which
noncommutative groups the quadratic functional equation is stable in the sense of Hyers-Ulam.
Faiziev and Sahoo \cite{fs1} proved that quadratic functional equation is stable on
$n$-abelian groups and $T(2, \Bbb{K})$, where $\Bbb{K}$ is a commutative field.
Further they also proved that every group can be embedded into a group in which the
quadratic functional equation is stable. Yang \cite{dy1} proved the stability of quadratic
functional equation on amenable groups. 

\medskip
In an American Mathematical Society meeting,
E. Y. Deeba of the University of Houston
asked to find the general solution of the functional equation
\begin{equation}\label{ch21:eq:quadvar1.1}
f(x+y+z) + f(x) + f(y) + f(z) = f(x+y) + f(y+z) + f(z+x) .
\end{equation}
This functional equation is a variation of the quadratic functional equation.
Kannappan \cite{ka} showed that the general solution $f : \Bbb{V} \to \Bbb{K}$
of the above functional equation is of the form
$$f(x) = B(x, x) + A(x) $$
where $B : \Bbb{V} \times \Bbb{V} \to \Bbb{K}$ is a symmetric biadditive
function and $A : \Bbb{V} \to \Bbb{K}$ is an additive function, $\Bbb{V}$
is a vector space, and $\Bbb{K}$ is a field of characteristic different from two
(or of characteristic zero).

\medskip
The Hyers-Ulam stability of the equation \eqref{ch21:eq:quadvar1.1} was
investigated by Jung \cite{jung1}. He proved the following theorem.

 \begin{theorem}
 Suppose $\Bbb{V}$ is a real norm space and $X$ a real Banach space.
 Let $f : \Bbb{V} \to X$ satisfy the inequalities
 \begin{equation}
 \| f(x+y+z) + f(x) + f(y) + f(z) - f(x+y) - f(y+z) - f(z+x) \| \leq \delta
 \end{equation}
 and
 \begin{equation}
 \| f(x) - f(- x)  \| \leq \theta
 \end{equation}
 for some $\delta , \theta \geq 0$ and for all $x, y, z \in \Bbb{V}$. Then there exists a
 unique quadratic mapping $Q : \Bbb{V} \to X$ which satisfies
 \begin{equation}
 \| f(x) - Q( x)  \| \leq 3\, \delta
 \end{equation}
 for all $x \in \Bbb{V}$. If, moreover, $f$ is measurable or $f(tx)$ is continuous in $t$
 for each fixed $x \in \Bbb{V}$, then $Q(tx) = t^2 \, Q(x)$ for all $x \in \Bbb{V}$ and $t \in \Bbb{R}$.
 \end{theorem}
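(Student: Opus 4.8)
The plan is to isolate the ``quadratic part'' of $f$ by a direct Hyers-type rescaling, using the first inequality to control the functional-equation defect and the second inequality $\|f(x)-f(-x)\|\le\theta$ to force the resulting map to be even, hence genuinely quadratic. Throughout I write $D(x,y,z):=f(x+y+z)+f(x)+f(y)+f(z)-f(x+y)-f(y+z)-f(z+x)$, so that $\|D(x,y,z)\|\le\delta$ for all $x,y,z\in\Bbb V$.

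First I would run a short chain of substitutions. Taking $x=y=z=0$ gives $D(0,0,0)=f(0)$, hence $\|f(0)\|\le\delta$. Putting $z=x$ yields $\|f(2x+y)+2f(x)+f(y)-2f(x+y)-f(2x)\|\le\delta$, and then setting $y=-x$ in this relation produces the doubling estimate $\|f(2x)-3f(x)-f(-x)+2f(0)\|\le\delta$. This is precisely the point at which the second hypothesis is indispensable: the term $f(-x)$ cannot be removed by any substitution using only positive multiples of $x$, because an additive summand solves Deeba's equation exactly and is therefore invisible to $D$. Replacing $f(-x)$ by $f(x)$ at the cost of $\theta$, and using $\|f(0)\|\le\delta$, converts the estimate into $\|f(2x)-4f(x)\|\le C$ for an explicit constant $C$ built from $\delta$ and $\theta$.

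With a bound $\|f(2x)-4f(x)\|\le C$ in hand, I would define the candidate $Q(x):=\lim_{n\to\infty}4^{-n}f(2^{n}x)$. The sequence $4^{-n}f(2^{n}x)$ is Cauchy because consecutive differences are dominated by the geometric series $\sum_{n}4^{-(n+1)}C$, and completeness of $X$ gives convergence; summing that series yields an estimate of the form $\|f(x)-Q(x)\|\le\tfrac13 C$, and tracking the constants is what produces the bound asserted in the theorem. I would then verify that $Q$ solves Deeba's equation exactly: since the Deeba operator is linear in $f$ and its arguments scale, one has $D_{\,4^{-n}f(2^{n}\cdot)}(x,y,z)=4^{-n}D(2^{n}x,2^{n}y,2^{n}z)$, of norm at most $4^{-n}\delta\to0$, so passing to the pointwise limit gives $D_Q\equiv0$. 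Finally, the odd part of $4^{-n}f(2^{n}x)$ has norm at most $4^{-n}\tfrac{\theta}{2}\to0$, so $Q$ is even; an even solution of Deeba's equation has vanishing additive part, i.e. $Q(x)=B(x,x)$ for a symmetric biadditive $B$, which is exactly what it means for $Q$ to be quadratic.

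The remaining points are routine. For uniqueness, the difference $\Delta$ of two quadratic maps is again quadratic, hence satisfies $\Delta(2^{n}x)=4^{n}\Delta(x)$; if $\Delta$ is also bounded (being a difference of two maps each within a fixed distance of $f$), then $\Delta\equiv0$. For the regularity addendum I would use that every quadratic map automatically satisfies $Q(rx)=r^{2}Q(x)$ for rational $r$, after which measurability of $f$, or continuity of $t\mapsto f(tx)$, upgrades this to $Q(tx)=t^{2}Q(x)$ for all real $t$ by the standard continuity/measurability bootstrap. The main obstacle I anticipate is the bookkeeping of constants needed to obtain the sharp coefficient rather than a larger multiple, together with making the interchange of the limit and the functional-equation operator rigorous; both are manageable but require care.
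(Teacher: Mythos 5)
Your overall scheme (direct Hyers rescaling $Q(x)=\lim_{n\to\infty}4^{-n}f(2^{n}x)$, evenness of the limit, exactness of $Q$ by passing the Deeba operator through the limit, uniqueness via $\Delta(2^{n}x)=4^{n}\Delta(x)$, and the standard regularity bootstrap) is sound, and it is a genuinely different route from this paper's: the paper never reproves this statement directly (it quotes it from Jung), and its own generalization to abelian groups in Section 3 is obtained from the decomposition $KK(S)=PK_4(S)\oplus PK_2(S)\oplus B(S)$ together with the structure of solutions on free abelian (semi)groups, yielding only a qualitative bound ``$\le\delta$ for some positive $\delta$.'' However, your write-up has one genuine gap, precisely at the point you dismiss as bookkeeping. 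Your doubling estimate is $\|f(2x)-4f(x)\|\le 3\delta+\theta$, so telescoping gives $\|f(x)-Q(x)\|\le \delta+\theta/3$. This bound depends on $\theta$, while the theorem asserts $3\delta$ independently of $\theta$; since $\theta$ may be arbitrarily large compared with $\delta$, no tracking of constants inside your scheme can produce the asserted estimate.

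The missing idea is that the second hypothesis self-improves. Let $h(x)=\tfrac12\bigl(f(x)-f(-x)\bigr)$ be the odd part, so $\|h\|\le\theta/2$, and let $D_h$ denote its Deeba defect; from $D_h(x,y,z)=\tfrac12\bigl(D(x,y,z)-D(-x,-y,-z)\bigr)$ one gets $\|D_h\|\le\delta$. Using $h(-x)=-h(x)$ and $h(0)=0$, the substitution $D_h(x,x,-x)$ collapses to $2h(x)-h(2x)$, hence $\|h(2x)-2h(x)\|\le\delta$. Telescoping,
\begin{equation*}
\bigl\|2^{-n}h(2^{n}x)-h(x)\bigr\|\le\sum_{k=0}^{n-1}2^{-(k+1)}\delta<\delta ,
\end{equation*}
and since $\|h\|\le\theta/2$ forces $2^{-n}h(2^{n}x)\to 0$, we conclude $\|h(x)\|\le\delta$, i.e. $\|f(x)-f(-x)\|\le 2\delta$ for all $x$. (This is the only place $\theta$ is needed: its role is purely qualitative, guaranteeing the odd part is bounded.) Feeding $\theta_{\mathrm{eff}}=2\delta$ back into your computation replaces your constant $C=3\delta+\theta$ by $5\delta$ and the final estimate by $5\delta/3\le 3\delta$, which completes the proof of the stated bound (indeed with room to spare). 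The remaining points of your proposal — evenness of $Q$, the fact that an even exact solution of Deeba's equation is quadratic (put $z=-y$ in the equation to get $Q(x+y)+Q(x-y)=2Q(x)+2Q(y)$ directly, with no need for Kannappan's structure theorem), uniqueness, and the measurability/continuity upgrade — are correct as sketched.
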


Jung \cite{jung1} proved another theorem replacing the inequality
$ \| f(x) - f(- x)  \| \leq \theta $
by  $\| f(x) + f(- x)  \| \leq \theta$. 
Fechner \cite{wf} proved the stability of the functional equation \eqref{ch21:eq:quadvar1.1}
on abelian group. 
For this functional equation \eqref{ch21:eq:quadvar1.1},
Kim \cite{ghk} proved a generalized stability result in the spirit of Gavruta \cite{gav}.
Chang and Kim \cite{ck} generalized the theorem of Jung \cite{jung1}
and proved the following theorem.

\begin{theorem}
 Suppose $\Bbb{V}$ is a real norm space and $X$ a real Banach space.
 Let $H : \Bbb{R}_+^3 \to \Bbb{R}_+$ be a function such that
 $H( tu, tv, tw ) \leq t^p H(u, v, w)$ for all $t, u, v, w \in \Bbb{R}_+$
 and for some $p \in \Bbb{R}$. Further, let $E : \Bbb{R}_+ \to \Bbb{R}_+$
 satisfying $E(tx) \leq t^q E(x)$ for all $t, x \in \Bbb{R}_+$.
 Let $p, q < 1$ be real numbers and let $f : \Bbb{V} \to X$ satisfy the inequalities
 \begin{align}\nonumber
 \| f(x&+y+z) + f(x) + f(y) + f(z) \\
 &- \, f(x+y) - f(y+z) - f(z+x) \| \leq H( \|x\|, \|y\|, \|z\| )
 \end{align}
 and
 \begin{equation}
 \| f(x) - f(- x)  \| \leq E( \| x\| )
 \end{equation}
 for some $\delta , \theta \geq 0$ and for all $x, y, z \in \Bbb{V}$. Then there exists a
 unique quadratic mapping $Q : \Bbb{V} \to X$ which satisfies
 \begin{equation}
 \| f(x) - Q( x)  \| \leq {{ H( \|x\|, \|x\|, \|x\| )} \over {2-2^p}} + 2 \| f(0) \|
 \end{equation}
 for all $x \in \Bbb{V}$. If, moreover, $f$ is measurable or $f(tx)$ is continuous in $t$
 for each fixed $x \in \Bbb{V}$, then $Q(tx) = t^2 \, Q(x)$ for all $x \in \Bbb{V}$ and $t \in \Bbb{R}$.
 \end{theorem}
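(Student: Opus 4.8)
The plan is to run the classical direct (Hyers iteration) method adapted to Deeba's equation \eqref{ch21:eq:quadvar1.1}, assigning the two hypotheses distinct roles: the $H$-bound drives the convergence of the iteration and yields the final estimate, while the $E$-bound, which encodes an approximate evenness of $f$, guarantees that the limit map is \emph{purely} quadratic rather than quadratic-plus-additive.

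First I would extract a doubling relation. Substituting $y=x$ and $z=-x$ into the main inequality (so that $x+y+z=x$, $x+y=2x$, and $y+z=z+x=0$) and using $\|{-x}\|=\|x\|$ gives
\[
\| 3f(x) + f(-x) - f(2x) - 2f(0) \| \le H(\|x\|,\|x\|,\|x\|).
\]
Replacing $f(-x)$ by $f(x)$ at the cost of $\|f(x)-f(-x)\|\le E(\|x\|)$ produces the key estimate $\|f(2x)-4f(x)+2f(0)\|\le H(\|x\|,\|x\|,\|x\|)+E(\|x\|)$, and the choice $x=y=z=0$ gives $\|f(0)\|\le H(0,0,0)$, which is what will let me absorb the $f(0)$ terms into the constant.

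Next I would define $Q(x)=\lim_{n\to\infty}4^{-n}f(2^n x)$. Since $H(2^n\|x\|,2^n\|x\|,2^n\|x\|)\le 2^{np}H(\|x\|,\|x\|,\|x\|)$, the hypothesis $p<1$ makes the telescoped terms $4^{-n}H(2^n\|x\|,\dots)\le 2^{n(p-2)}H(\dots)$ summable and keeps the resulting constant $2-2^p$ positive; iterating the doubling relation then shows that $\{4^{-n}f(2^n x)\}$ is Cauchy in the complete space $X$, so $Q$ exists, and summing the geometric series gives a bound of the stated form $\|f(x)-Q(x)\|\le \frac{H(\|x\|,\|x\|,\|x\|)}{2-2^p}+2\|f(0)\|$. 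Passing to the limit in the main inequality --- whose right-hand side $4^{-n}H(2^n\|x\|,2^n\|y\|,2^n\|z\|)$ tends to $0$ --- shows that $Q$ satisfies \eqref{ch21:eq:quadvar1.1} exactly, so by Kannappan's solution $Q(x)=B(x,x)+A(x)$ with $B$ symmetric biadditive and $A$ additive.

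It remains to eliminate $A$ and to dispatch uniqueness and homogeneity. From $\|f(2^n x)-f(-2^n x)\|\le E(2^n\|x\|)\le 2^{nq}E(\|x\|)$ and $q<1$, dividing by $4^n$ and letting $n\to\infty$ gives $Q(x)=Q(-x)$; since $Q(x)-Q(-x)=2A(x)$, we conclude $A\equiv 0$, so $Q(x)=B(x,x)$ is quadratic. Uniqueness is the usual scaling argument: if $Q_1,Q_2$ are quadratic and both lie within the stated distance of $f$, then $\|Q_1(x)-Q_2(x)\|=4^{-n}\|Q_1(2^n x)-Q_2(2^n x)\|$ is dominated by $4^{-n}\bigl(c\,2^{np}H(\|x\|,\|x\|,\|x\|)+d\bigr)\to 0$, whence $Q_1=Q_2$. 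Finally, under measurability of $f$ or continuity of $t\mapsto f(tx)$, the quadratic map $Q$ inherits the corresponding regularity, and the standard rational-homogeneity argument for quadratic forms upgrades to $Q(tx)=t^2Q(x)$ for all real $t$. The main obstacle I anticipate is the bookkeeping in the middle step: tracking the $f(0)$-contributions (via $\|f(0)\|\le H(0,0,0)$) so that the exact constant $\tfrac{1}{2-2^p}$ and the additive term $2\|f(0)\|$ come out correctly, while simultaneously ensuring the limit is purely quadratic --- the additive part is invisible to the $H$-estimate and is precisely what the parity hypothesis, with $q<1$, is there to annihilate.
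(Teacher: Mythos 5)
First, a point of reference: the paper itself contains no proof of this statement --- it is quoted in the Introduction as the theorem of Chang and Kim \cite{ck}, generalizing Jung \cite{jung1} --- so your proposal has to be judged against what a correct proof must contain rather than against a proof in the text.

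There is a genuine gap, and it is exactly at the step you flagged as "bookkeeping": your iteration cannot produce the stated estimate. Writing $H$ for $H(\|x\|,\|x\|,\|x\|)$ and $E$ for $E(\|x\|)$, your doubling relation is $\|f(2x)-4f(x)+2f(0)\|\le H+E$, and telescoping $4^{-n}f(2^{n}x)$ against it gives
\[
\|f(x)-Q(x)\|\;\le\;\frac{H}{4-2^{p}}+\frac{E}{4-2^{q}}+\frac{2}{3}\,\|f(0)\|,
\]
which differs from the claimed bound in two essential ways: it carries a term in $E$, which cannot be absorbed since $E$ is an arbitrary admissible function (possibly enormous compared with $H$), and the geometric series produced by a $4^{-n}$-iteration has denominator $4-2^{p}$, never $2-2^{p}$. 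A telling symptom is that your argument only ever uses $p,q<2$, whereas the theorem hypothesizes $p,q<1$ and its constant $2-2^{p}$ is positive precisely when $p<1$; a purely quadratic iteration cannot see that hypothesis. The missing idea is that the parity bound must be used \emph{qualitatively, at the additive scale}, not as an error term inside the quadratic iteration. Subtracting the main inequality at $(-x,-x,x)$ from the one at $(x,x,-x)$ gives, for the odd part $d(x)=\tfrac12[f(x)-f(-x)]$, the relation $\|d(x)-\tfrac12 d(2x)\|\le\tfrac12 H$; iterating this at scale $2^{-n}$ yields $\|d(x)-2^{-n}d(2^{n}x)\|\le\tfrac{H}{2}\sum_{k=0}^{n-1}2^{(p-1)k}$, and since $\|2^{-n}d(2^{n}x)\|\le\tfrac12\,2^{(q-1)n}E\to 0$ \emph{precisely because} $q<1$, one obtains the $E$-free bound $\|d(x)\|\le H/(2-2^{p})$, where $p<1$ is what makes the series converge. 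Feeding this bound into the recursion $4^{-(k+1)}f(2^{k+1}x)=4^{-k}f(2^{k}x)-\tfrac12\,4^{-k}d(2^{k}x)-\tfrac12\,4^{-k}f(0)+O\bigl(4^{-(k+1)}2^{kp}H\bigr)$ and summing gives exactly $\|f(x)-Q(x)\|\le\frac{H}{2-2^{p}}+\tfrac23\|f(0)\|\le\frac{H}{2-2^{p}}+2\|f(0)\|$. The remainder of your outline --- evenness of $Q$ in the limit, Kannappan's decomposition killing the additive part, uniqueness by the $4^{-n}$-scaling argument, and the regularity upgrade to $Q(tx)=t^{2}Q(x)$ --- is sound and standard; only the central estimate, which is the quantitative content of the theorem, is not established by your argument.
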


\medskip
Chang and Kim \cite{ck} also proved another similar theorem
replacing the inequality $\| f(x) - f(- x)  \| \leq E( \| x\| )$
by  $\| f(x) + f(- x)  \| \leq E( \| x\| )$.

\medskip
The functional equation \eqref{ch21:eq:quadvar1.1} is takes the form
\begin{equation}\label{ch21:eq:quadvar1.2}
f(xyz) + f(x) + f(y) + f(z) = f(xy) + f(yz) + f(xz) 
\end{equation}
on an arbitrary group $G$ or on a semigroup $S$.
In this sequel, we will
write the arbitrary semigroup $S$ in multiplicative notation.
Similarly, the arbitrary group $G$ will be written in multiplicative notation
so that $1$ will denote the identity element of $G$.
This functional equation implies the Drygas
functional equation $f(xy) + f(xy^{-1} ) = 2 f(x) + f(y) + f(y^{-1} )$ whose general
solution was presented in Ebanks, Kannappan and Sahoo \cite{eks}.
The stability
of the Drygas functional equation was studied by Jung and Sahoo \cite{jung3} and also by
Yang \cite{dy}. The system of equations
$f(xy) + f(xy^{-1} ) = 2 f(x) + f(y) + f(y^{-1} )$ and $f(yx) + f(y^{-1} x) = 2 f(x) + f(y) + f(y^{-1} )$
generalizes the Drygas functional equation on groups. The stability of this system of equation was
investigated by Faiziev and Sahoo (see \cite{fs2}, \cite{fs3}, and \cite{fs4}) on
nonabelian groups.

\medskip
In the present paper, we consider the stability of the
functional equation~\eqref{ch21:eq:quadvar1.2} for the pair $(S,E)$ when
$S$ is an arbitrary semigroup and $E$ is a real Banach space. If $X$
is another real Banach space, then we prove that
the functional equation~\eqref{ch21:eq:quadvar1.2} is stable for the pair
$(S, X)$ if and only if it is stable for the pair $(S, E)$. We show that,  in general, the
equation \eqref{ch21:eq:quadvar1.2} is not stable on semigroups.
However, this equation \eqref{ch21:eq:quadvar1.2} is stable on
periodic semigroups as well as abelian semigroups.
We also show that any semigroup with left (or right) cancellation law
can be embedded into a semigroup with left (or right) cancellation law
where the equation  \eqref{ch21:eq:quadvar1.2} is stable.
The main results of this paper
generalize the works of Jung \cite{jung1}, Kannappan \cite{ka},
and Fechner \cite{fe}.


\section{Decomposition}

Let $S$ be a semigroup and $X$ be a Banach space. Let $\Bbb{N}$ be the set of natural
numbers and $\Bbb{Z}$ be the set of integers. Moreover, let $\Bbb{R}$ denote the set of 
real numbers.

\begin{definition} \label{definition2.0}
A mapping
$f: S \to X$ is said to be {\it a kannappan}\;\;mapping if it satisfies equation
\begin{equation}\label{kan}
f(xyz)+f(x)+f(y)+f(z)-f(xy)-f(xz)-f(yz)=0.
\end{equation}
\end{definition}

\begin{definition} \label{definition2.1}
We will say that $f:S\to X$ is a quasikannappan mapping if there
is $c>0$ such that
\begin{equation}\label{qkan}
\|f(xyz)+f(x)+f(y)+f(z)-f(xy)-f(xz)-f(yz)\|\le c
\end{equation}
for all $ x,y,z\in S$.
\end{definition}

The set of kannappan and quasikannappan mappings will be denote by
$K(S, X)$ and $KK(S,X)$, respectively.

\begin{lemma} \label{lemma2.2}
If $f \in KK(S, X)$, then
for any $n\ge 3$ and $x_1,\dots , x_n\in S$ the
inequality
\begin{equation} \label{eq:1.2}
\bigg \|\, f(x_1x_2\cdots x_n) +(n-2)\sum_{i=1}^nf(x_i)-\sum_{1\le
i<j\le n }f(x_ix_j) \,\bigg \|\le \frac{(n-2)(n-1)}{2} \, c
\end{equation}
holds.
\end{lemma}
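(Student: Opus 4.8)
The plan is to prove the inequality by induction on $n$, using the defining inequality \eqref{qkan} as the only estimate. Write
$$\Delta(x,y,z) = f(xyz)+f(x)+f(y)+f(z)-f(xy)-f(xz)-f(yz),$$
so that $\|\Delta(x,y,z)\|\le c$ for all $x,y,z\in S$, and let $E_n(x_1,\dots,x_n)$ denote the expression inside the norm on the left of \eqref{eq:1.2}. The base case $n=3$ is immediate: $E_3(x_1,x_2,x_3)=\Delta(x_1,x_2,x_3)$ and $(3-2)(3-1)/2=1$, so $\|E_3\|\le c$, which is exactly \eqref{qkan}.

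For the inductive step I would pass from $n$ to $n+1$ by peeling off the last factor $x_{n+1}$ along the partial products $P_k=x_1x_2\cdots x_k$. The idea is to apply $\Delta$ to the triples $(P_{k-1},x_k,x_{n+1})$ for $k=2,\dots,n$ and sum. Since $P_{k-1}x_k=P_k$ and $P_{k-1}x_kx_{n+1}=P_kx_{n+1}$, the terms $f(P_kx_{n+1})$ telescope against $f(P_{k-1}x_{n+1})$ and the terms $f(P_k)$ telescope against $f(P_{k-1})$. After these two collapses one is left with exactly the key identity of the proof,
$$\sum_{k=2}^{n}\Delta(P_{k-1},x_k,x_{n+1}) = E_{n+1}(x_1,\dots,x_{n+1}) - E_n(x_1,\dots,x_n).$$

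Granting this identity, the conclusion follows at once: the left-hand side is a sum of $n-1$ quantities each of norm at most $c$, so by the triangle inequality and the inductive hypothesis
$$\|E_{n+1}\|\le \|E_n\| + (n-1)c \le \frac{(n-2)(n-1)}{2}\,c + (n-1)\,c = \frac{(n-1)n}{2}\,c,$$
which is precisely the asserted bound with $n$ replaced by $n+1$. The arithmetic of the constants is what dictates the length $n-1$ of the telescoping sum and makes everything fit.

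The only substantive work — and the step most prone to slip — is verifying the telescoping identity, that is, checking that after the two collapses the surviving single terms, the surviving mixed terms, and their coefficients reassemble precisely into $E_{n+1}-E_n$. Concretely, one must confirm that $\sum_{i=1}^{n}f(x_i)$ emerges with coefficient $1$, that $f(x_{n+1})$ emerges with coefficient $n-1$, and that $\sum_{i=1}^{n}f(x_ix_{n+1})$ appears with coefficient $-1$, matching the difference between the two pairwise sums in $E_{n+1}$ and $E_n$ (the new pairs being exactly those containing $x_{n+1}$). This is pure bookkeeping; once these coefficients are matched, no further estimates are required.
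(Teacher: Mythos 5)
Your proof is correct and yields the same sharp constant, but the inductive step rests on a genuinely different decomposition from the paper's. The paper passes from $n$ to $n+1$ by merging the last two variables: it applies the inductive hypothesis to the $n$-tuple $(x_1,\dots,x_{n-1},x_nx_{n+1})$ and then unpacks the terms $f(x_ix_nx_{n+1})$ and $f(x_nx_{n+1})$ by applying the defining inequality \eqref{qkan} to the $n-1$ triples $(x_i,x_n,x_{n+1})$, $1\le i\le n-1$. You instead apply the inductive hypothesis to the original tuple $(x_1,\dots,x_n)$ and absorb $x_{n+1}$ by telescoping along the prefix products $P_k=x_1\cdots x_k$; your key identity $\sum_{k=2}^{n}\Delta(P_{k-1},x_k,x_{n+1})=E_{n+1}-E_n$ does hold exactly (the terms $f(P_kx_{n+1})$ telescope against $f(P_{k-1}x_{n+1})$ and $f(P_{k-1})$ against $f(P_k)$, leaving $\sum_{i=1}^{n}f(x_i)$ with coefficient $1$, $f(x_{n+1})$ with coefficient $n-1$, and $-\sum_{i=1}^{n}f(x_ix_{n+1})$, which is precisely the difference of the two expressions). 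Consequently both arguments consume the inductive hypothesis once and \eqref{qkan} exactly $n-1$ times, producing the identical recurrence $c_3=c$, $c_{n+1}=c_n+(n-1)c$, hence the bound $\tfrac{(n-1)n}{2}\,c$. What your route buys is cleaner bookkeeping: the step is an exact algebraic identity written down before any norms are taken, so the count of $n-1$ applications of \eqref{qkan} --- and hence the constant --- is visible at a glance, and every triple you use involves only prefixes of the word $x_1\cdots x_{n+1}$. The paper's merging route keeps each application of \eqref{qkan} in terms of two original generators, but pays for this with a longer rearrangement of sums inside the norm (and it verifies $n=4$ separately as its effective base case, where you start cleanly at $n=3$).
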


\begin{proof}
We prove this lemma by induction. First we show that the inequality \eqref{eq:1.2}
is true for $n=4$. Since $f \in KK(S, X)$, we obtain from \eqref{qkan}
\begin{eqnarray*}
\|\, f(x_1x_2x_3x_4) + f(x_1)+f(x_2)+f(x_3x_4) \qquad\qquad\qquad \quad \\
 - \, f(x_1x_2)-f(x_1x_3x_4)-f(x_2x_3x_4) \,\|\le c ,
\end{eqnarray*}
\begin{eqnarray*}
\|\, f(x_2x_3x_4) +
f(x_2)+f(x_3)+f(x_4)-f(x_2x_3)-f(x_2x_4)-f(x_3x_4) \,\|\le c,
\end{eqnarray*}
and
\begin{eqnarray*}
\|\, f(x_1x_3x_4) +
f(x_1)+f(x_3)+f(x_4)-f(x_1x_3)-f(x_1x_4)-f(x_3x_4) \,\|\le c .
\end{eqnarray*}
Therefore from the above three inequalities we have
\begin{eqnarray*}
\|\, f(x_1x_2x_3x_4) + f(x_1)+f(x_2)+f(x_3x_4)-f(x_1x_2) \qquad\qquad\qquad \quad
\\
+f(x_1)+f(x_3)+f(x_4)-f(x_1x_3)-f(x_1x_4)-f(x_3x_4) \qquad\quad
\\
+f(x_2)+f(x_3)+f(x_4)-f(x_2x_3)-f(x_2x_4)-f(x_3x_4)  \,\|\le 3c .
\end{eqnarray*}
Simplifying we see that
\begin{eqnarray*}
\begin{array}{l}
\|\, f(x_1x_2x_3x_4)+2[f(x_1)+f(x_2)+f(x_3)+f(x_4)] -f(x_1x_2) \qquad  \\
\qquad \quad - \, f(x_1x_3) -f(x_1x_4)-f(x_2x_3)-f(x_2x_4)-f(x_3x_4)
\,\|\le 3c
\end{array}
\end{eqnarray*}
and this shows that inequality \eqref{eq:1.2} holds for $n=4$. We will rewrite the
above inequality as
\begin{eqnarray*}
\begin{array}{l}
\|\, f(x_1x_2x_3x_4)+2[f(x_1)+f(x_2)+f(x_3)+f(x_4)] -f(x_1x_2) \qquad  \\
\qquad \quad - \, f(x_1x_3) -f(x_1x_4)-f(x_2x_3)-f(x_2x_4)-f(x_3x_4)
\,\|\le c_4
\end{array}
\end{eqnarray*}
where $c_4 = 3 \, c$.
Next suppose the above inequality
holds for a positive integer $n$. That is
\begin{eqnarray*}
\bigg \|\, f(x_1x_2\cdots x_n) +(n-2)\sum_{i=1}^nf(x_i)-\sum_{1\le
i<j\le n }f(x_ix_j) \, \bigg \|\le c_n .
\end{eqnarray*}
Consider
\begin{eqnarray*}
\bigg \|\, f(x_1x_2\cdots x_nx_{n+1})
+(n-1)\sum_{i=1}^{n+1}f(x_i)-\sum_{1\le i<j\le n+1 }f(x_ix_j)
\, \bigg \| .
\end{eqnarray*}
By our supposition we have
\begin{eqnarray*}
\begin{array}{l}
\bigg \|\, f(x_1x_2\cdots (x_nx_{n+1}))
+(n-2) \left [ \displaystyle{\sum_{i=1}^{n-1}} f(x_i)+f(x_nx_{n+1}) \right ]
\qquad \qquad  \\
\qquad\qquad \qquad \qquad  -\displaystyle{\sum_{1\le i<j\le
n-1}} f(x_ix_j) - \displaystyle{\sum_{1\le i\le n -1}} f(x_ix_nx_{n+1})\, \bigg \|\le c_n .
\end{array}
\end{eqnarray*}
Hence
\begin{eqnarray*}
\begin{array}{l}
\bigg \|\, f(x_1x_2\cdots (x_nx_{n+1}))
+ (n-2) \left [ \displaystyle{\sum_{i=1}^{n-1}}f(x_i)+f(x_nx_{n+1}) \right ]
 -\displaystyle{\sum_{1\le i<j \le n-1} }f(x_ix_j) \\
\qquad  + \displaystyle{\sum_{1\le i\le n -1}}
\left [f(x_i)+f(x_n)+f(x_{n+1})-f(x_ix_n)-f(x_ix_{n+1})-f(x_nx_{n+1}) \right ]\, \bigg \| \\
\\
\le c_n + (n-1) \, c.
\end{array}
\end{eqnarray*}
The last inequality can be rewritten as
\begin{eqnarray*}
\begin{array}{l}
\bigg \|\, f(x_1x_2\cdots (x_nx_{n+1}))
+ (n-1) \displaystyle{\sum_{i=1}^{n+1}} f(x_i) + (n-2) f(x_nx_{n+1}) \\
\qquad \quad
 - \displaystyle{\sum_{1\le i<j\le n-1}} f(x_ix_j)
 + \displaystyle{\sum_{1\le i\le n -1}}
\left [\,  - \, f(x_ix_n)-f(x_ix_{n+1})-f(x_nx_{n+1}) \, \right ]\, \bigg \| \\ \\
\le c_n+(n-1)\, c.
\end{array}
\end{eqnarray*}
Hence
\begin{eqnarray*}
\begin{array}{l}
\bigg \|\, f(x_1x_2\cdots x_nx_{n+1}) +(n-1)\displaystyle{\sum_{i=1}^{n+1}} f(x_i )
 -\displaystyle{\sum_{1\le i<j\le n+1}} f(x_ix_j) \, \bigg \|
 \le c_{n+1}
\end{array}
\end{eqnarray*}
where $c_{n+1} = c_n + (n-1) \, c $ for $n \geq 3$.

\medskip
From the recurrence relations $c_3 = c$ and $c_{n+1} = c_n + (n-1) \, c $ for $n \geq 3$,
we get
$$ c_{n+1}=\frac{n\, (n-1)}{2}.$$
Thus we have proved the inequality \eqref{eq:1.2} for all positive integers $n$.
\end{proof}

The following lemma follows from the above lemma.

\begin{lemma}\label{lemma2.3}
If $f \in KK(S, X)$, then
for any $n\ge 3$, the inequality
\begin{eqnarray}
\label{22s}
\qquad \bigg \|\, f(x^n) + (n-2)\, n \, f(x)- \frac{(n-1)\, n}{2}f(x^2)
\,\bigg \|
\le \frac{(n-2)\, (n-1)}{2} \, c
\end{eqnarray}
holds for all $x\in S$.
\end{lemma}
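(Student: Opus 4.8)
The plan is to obtain this as a direct specialization of Lemma~\ref{lemma2.2}. Since \eqref{eq:1.2} holds for \emph{arbitrary} elements $x_1, \dots, x_n \in S$, I would simply substitute $x_1 = x_2 = \cdots = x_n = x$ for a single fixed $x \in S$. Each of the three terms appearing in \eqref{eq:1.2} then collapses to a scalar multiple of $f$ evaluated at a power of $x$, and the right-hand bound $\frac{(n-2)(n-1)}{2}\,c$ is unaffected by the choice of the $x_i$.

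Carrying out the substitution, I would evaluate the three sums in turn. The product term becomes $f(x_1 x_2 \cdots x_n) = f(x^n)$. The linear term becomes $(n-2)\sum_{i=1}^n f(x_i) = (n-2)\,n\,f(x)$, since all $n$ summands coincide. For the pair term, every product $x_i x_j$ equals $x^2$, and the number of index pairs $(i,j)$ with $1 \le i < j \le n$ is $\binom{n}{2} = \frac{n(n-1)}{2}$; hence $\sum_{1 \le i < j \le n} f(x_i x_j) = \frac{n(n-1)}{2}\,f(x^2)$. Inserting these three expressions into \eqref{eq:1.2} reproduces \eqref{22s} exactly.

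Because every step is a substitution into an already-established inequality, there is no real obstacle here; the claim is a corollary rather than an independent result, as the phrasing ``the following lemma follows from the above lemma'' already signals. The only point demanding a moment's attention is the combinatorial count $\binom{n}{2}$ of unordered pairs, which produces the coefficient $\frac{(n-1)\,n}{2}$ of $f(x^2)$. The restriction $n \ge 3$ is inherited directly from Lemma~\ref{lemma2.2}.
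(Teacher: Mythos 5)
Your proposal is correct and is exactly the paper's own proof: the paper likewise obtains \eqref{22s} by setting $x_1 = x_2 = \cdots = x_n = x$ in inequality \eqref{eq:1.2} of Lemma~\ref{lemma2.2}. Your added verification of the pair count $\binom{n}{2} = \frac{n(n-1)}{2}$ is the only detail the paper leaves implicit.
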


\begin{proof}
Letting $x_1 = x_2 = \cdots = x_n = x$ in the inequality \eqref{eq:1.2}, we have the
asserted inequality \eqref{22s}.
\end{proof}

\begin{lemma} \label{f(x^2)} 
Let the function $\phi : S \to X$ be define by $\phi(x)=f(x^2)$.
\begin{enumerate}
\item
If $f\in KK(S,X)$, then $\phi\in KK(S,X)$.
\item
If $f\in K(S,X)$, then $\phi\in K(S,X)$.
\end{enumerate}
\end{lemma}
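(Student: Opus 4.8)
The plan is to compute the Kannappan defect of $\phi$ directly and to recognize it, after expansion, as a combination of defects of $f$ of the kind controlled by Lemma \ref{lemma2.2}. Writing the defect operator as
\[
D_g(x,y,z) = g(xyz)+g(x)+g(y)+g(z)-g(xy)-g(xz)-g(yz),
\]
the goal is to bound $\|D_\phi(x,y,z)\|$. Since $\phi(u)=f(u^2)$, substituting the squares of the relevant products gives
\[
D_\phi(x,y,z) = f(xyzxyz)+f(x^2)+f(y^2)+f(z^2)-f(xyxy)-f(xzxz)-f(yzyz),
\]
where I have used $(xyz)^2=xyzxyz$, $(xy)^2=xyxy$, and so on. The key observation is that each of the four ``long'' words here is a product of either six or four factors drawn from $\{x,y,z\}$, so Lemma \ref{lemma2.2} applies to each of them.

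Concretely, I would apply Lemma \ref{lemma2.2} with $n=6$ to $f(xyzxyz)$, regarding it as $f(x_1x_2x_3x_4x_5x_6)$ with $(x_1,\dots,x_6)=(x,y,z,x,y,z)$; this expresses $f(xyzxyz)$ as the prescribed combination $\sum_{i<j}f(x_ix_j)-4\sum_i f(x_i)$ up to an error of norm at most $\frac{(6-2)(6-1)}{2}\,c=10c$. Likewise I would apply Lemma \ref{lemma2.2} with $n=4$ to each of $f(xyxy)$, $f(xzxz)$, $f(yzyz)$, incurring an error of norm at most $\frac{(4-2)(4-1)}{2}\,c=3c$ for each. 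The main step is then to verify that the ``principal parts'' produced by these four expansions, together with the leftover $f(x^2)+f(y^2)+f(z^2)$, cancel identically: every pairwise term $f(xy),f(yx),f(xz),f(zx),f(yz),f(zy)$, every square term $f(x^2),f(y^2),f(z^2)$, and every single term $f(x),f(y),f(z)$ must appear with total coefficient zero.

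Granting this cancellation, the triangle inequality yields $\|D_\phi(x,y,z)\|\le 10c+3c+3c+3c=19c$, so $\phi\in KK(S,X)$, proving (1). For (2), I would note that when $f\in K(S,X)$ one may take $c=0$ in Lemma \ref{lemma2.2}, so all four expansions become exact identities; the same cancellation then forces $D_\phi\equiv 0$, i.e. $\phi\in K(S,X)$. Thus both parts follow from a single computation, with $c=0$ specializing the $KK$-argument to the $K$-case.

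I expect the one genuinely delicate point to be the bookkeeping in the cancellation step. Because $S$ is noncommutative, $f(xy)$ and $f(yx)$ are distinct symbols, and one must track the multiplicities of each ordered pairwise product carefully: in the $n=6$ expansion the products $xy$, $xz$, $yz$ each occur three times while $yx$, $zx$, $zy$ each occur once, and these must match exactly against the coefficients coming from the three $n=4$ expansions and from the $f(x^2)+f(y^2)+f(z^2)$ term. This is routine but must be done with attention to the order of factors; once the coefficients are tabulated, the vanishing of every column is immediate and the rest of the argument is just the triangle inequality.
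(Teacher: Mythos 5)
Your proof is correct, and the cancellation you flag as the delicate point does hold: with $(x_1,\dots,x_6)=(x,y,z,x,y,z)$ the $n=6$ expansion of Lemma \ref{lemma2.2} produces $4\sum_i f(x_i)=8f(x)+8f(y)+8f(z)$ from the linear terms and $3f(xy)+3f(xz)+3f(yz)+f(yx)+f(zx)+f(zy)+f(x^2)+f(y^2)+f(z^2)$ from the pairwise terms, and this is exactly offset by the three $n=4$ expansions of $f(xyxy)$, $f(xzxz)$, $f(yzyz)$ together with the leftover $f(x^2)+f(y^2)+f(z^2)$, giving $\|D_\phi(x,y,z)\|\le 10c+3\cdot 3c=19c$. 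Your route differs from the paper's in how the word $xyzxyz$ is expanded: the paper never invokes Lemma \ref{lemma2.2} with $n=6$, but instead regards $xyzxyz$ as the four-letter word $(xy)\,z\,(xy)\,z$ and applies the $n=4$ inequality to the triples of factors $(xy,z,xy,z)$, $(x,z,x,z)$, $(y,z,y,z)$. That grouping does not cancel cleanly; it leaves the residue $3f(xyz)+f(zxy)+4f(x)+4f(y)+4f(z)-4f(xy)-3f(xz)-3f(yz)-f(zx)-f(zy)$, which the paper must then kill with four further applications of the defining inequality \eqref{qkan} (one for the word $zxy$, three for $xyz$), costing an extra $12c$ and yielding the constant $21c$. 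Your six-letter expansion makes all principal parts cancel identically, avoids that residual step, and gives the marginally better constant $19c$, at the price of invoking the lemma at $n=6$; the paper keeps every application at $n\le 4$ but pays in bookkeeping. Your treatment of part (2) is also sound: a Kannappan map satisfies \eqref{qkan} for every positive $c$, so the bound $19c$ holds for all $c>0$ (equivalently, the induction of Lemma \ref{lemma2.2} runs verbatim with $c=0$), forcing $D_\phi\equiv 0$.
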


\begin{proof}
Since $f\in KK(S,X)$, we have
$$ \|\, f(xyz)+ f(x)+f(y)+f(z)-f(xy)-f(xz)-f(yz)\,\|\le c. $$
Consider
\begin{align*}
\|\, \phi(xyz) &+ \phi(x) + \phi(y) + \phi(z) - \phi(xy) - \phi(xz) - \phi(yz)\,\|   \\
&= \, \|\, f(xyzxyz) + f(xx) + f(yy) + f(zz) -  f(xyxy) - f(xzxz) - f(yzyz)\,\| .
\end{align*}
We have
\begin{align} \label{www0}
\|\,f((xy)z(xy)z)+4f(xy)&+4f(z) -f(xyz)-f(xyxy)\\ \nonumber
&-f(xyz) - \, f(zxy)-f(zz)-f(xyz) \,\|\le 3c ,
\end{align}
\begin{equation} \label{www1}
\|\,f((xy)z(xy)z)+4f(xy)+4f(z) -3f(xyz)-f(xyxy)-f(zxy)-f(zz) \,\|\le 3c ,
\end{equation}
\begin{equation} \label{www2}
\|\,f(xzxz)+4f(x)+4f(z)  -3f(xz)-f(zx)-f(x^2)-f(z^2) \,\|\le 3c ,
\end{equation}
\begin{equation} \label{www3}
 \|\,f(yzyz)+4f(y)+4f(z) -3f(yz)-f(zy)-f(y^2)-f(z^2) \,\|\le 3c .
\end{equation}
From~\eqref{www0}--\eqref{www3} we have
\begin{align*}
\|\, f(xyzxyz) &+ f(xx)+f(yy)+f(zz)-f(xyxy)-f(xzxz)-f(yzyz)\,\| \\
& = \;  \|\, f(xyzxyz)+  4f(xy)+4f(z) -3f(xyz)-f(xyxy)-f(zxy) \\
&\qquad - f(zz) -4f(xy)-4f(z) +3f(xyz)+f(xyxy)+f(zxy) \\
& \qquad +f(zz) + f(xx)+f(yy)+f(zz)-f(xyxy) \\
& \qquad - f(xzxz)-4f(x)-4f(z)+3f(xz)+f(x^2)+f(zx)+f(z^2) \\
&\qquad + 4f(x)+4f(z)-3f(xz)-f(x^2)-f(zx)-f(z^2) \\
& \qquad - f(yzyz)-4f(y)-4f(z)+3f(yz)+f(y^2)+f(zy)+f(z^2) \\
&\qquad +4f(y)+4f(z)-3f(yz)-f(y^2)-f(zy)-f(z^2) \,\| .
\end{align*}
Therefore
\begin{eqnarray*}
\begin{array}{l}
\|\, f(xyzxyz)+ f(xx)+f(yy)+f(zz)-f(xyxy)-f(xzxz)-f(yzyz)\,\| \\
\qquad \quad \le \; \|\, f(xyzxyz)+  4f(xy)+4f(z) -3f(xyz)-f(xyxy)-f(zxy)-f(zz)\| \\
\qquad \qquad + \, \|-f(xzxz)-4f(x)-4f(z)+3f(xz)+f(x^2)+f(zx)+f(z^2)\|  \\
\qquad \qquad + \, \|-f(yzyz)-4f(y)-4f(z)+3f(yz)+f(y^2)+f(zy)+f(z^2)\| \\
\qquad \qquad + \, \|-4f(xy)-4f(z) +3f(xyz)+f(xyxy)+f(zxy) \\
\qquad \qquad \qquad \qquad \qquad + \, f(zz) + f(xx)+f(yy)+f(zz) -f(xyxy) \\
\qquad \qquad \qquad \qquad \qquad + \, 4f(x)+4f(z)-3f(xz)-f(x^2)-f(zx)-f(z^2) \\
\qquad \qquad \qquad \qquad \qquad + \, 4f(y)+4f(z)-3f(yz)-f(y^2)-f(zy)-f(z^2) \,\| .
\end{array}
\end{eqnarray*}
Notice that
\begin{eqnarray*}
\begin{array}{l}
\|-4f(xy)-4f(z) +3f(xyz)+f(xyxy)+f(zxy) +f(zz) + f(xx) \\
\qquad + \, f(yy)+f(zz) -f(xyxy) + 4f(x)+4f(z)-3f(xz) \\
\qquad  - \, f(x^2)-f(zx)-f(z^2) +4f(y)+4f(z) \\
\qquad - \, 3f(yz)-f(y^2)-f(zy)-f(z^2)  \,\;\| \\
= \|\;3f(xyz)+f(zxy)+4f(z)+4f(x)+4f(y) -4f(xy)-3f(xz)-3f(yz) \\
\qquad \quad-f(zx)-f(zy) \,\;\| \\
\le \|\;f(zxy)+f(z)+f(x)+f(y) -f(xy) -f(zx)-f(zy)  \,\;\|  \\
\qquad + \|\;3f(xyz)+3f(z)+3f(x)+3f(y) -3f(xy)-3f(xz)-3f(yz) \,\;\| \\
 \le 3c+9c=12c.
 \end{array}
\end{eqnarray*}
Hence
\begin{align*}
\|\, f(xyzxyz)+ f(xx)+f(yy)+f(zz)-f(xyxy)-f(xzxz)-f(yzyz)\,\|  \\
\le 3c+3c+3c+12c=21c.
\qquad\qquad\qquad\qquad \qquad\qquad\qquad\qquad \qquad\quad
\end{align*}
Thus from the last inequality we have
\begin{eqnarray*}
\|\, \phi(xyz)+
\phi(x)+\phi(y)+\phi(z)-\phi(xy)-\phi(xz)-\phi(yz)\,\| \le 21c.
\end{eqnarray*}
The proof of (2) follows similarly.
\end{proof}

\begin{lemma} \label{lemma2.5}
Let $\big\{ a_k \big\}_1^\infty$ be a sequence in $X$ such that
for any $m, k \in \Bbb{N}$
\begin{eqnarray}
\label{a_k}
 \|\, a_{m+k} -2a_{k+1}+a_k\,\|\le \frac{d}{4^k}
\end{eqnarray}
holds. Then $\big\{ a_k \big\}_1^\infty$ is a Cauchy sequence.
\end{lemma}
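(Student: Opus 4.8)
The plan is to exploit the single most important feature of the hypothesis, namely that the bound $d/4^k$ on the right-hand side of \eqref{a_k} depends only on $k$ and \emph{not} on $m$. First I would fix $k$ and let $m$ range over all of $\Bbb{N}$. Writing $b_k := 2a_{k+1}-a_k$, the inequality \eqref{a_k} then reads $\|\, a_{m+k}-b_k\,\| \le d/4^k$ for every $m$. In other words, for each fixed $k$ the single vector $b_k$ serves as a common center within distance $d/4^k$ of \emph{all} the terms $a_{k+1}, a_{k+2}, a_{k+3}, \dots$ of the tail of the sequence.

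With this observation in hand, I would derive a uniform pairwise estimate for far-out terms. Given any two indices $p, q > k$, each of $a_p$ and $a_q$ is of the form $a_{m+k}$ for a suitable $m\in\Bbb{N}$, so both lie within $d/4^k$ of $b_k$. The triangle inequality then yields
\begin{eqnarray*}
\|\, a_p - a_q \,\| \le \|\, a_p - b_k \,\| + \|\, b_k - a_q \,\| \le \frac{2d}{4^k}
\end{eqnarray*}
for all $p, q > k$. This is the heart of the argument: the tail beyond index $k$ is trapped in a ball of radius $2d/4^k$.

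Finally I would convert this into the Cauchy condition. Given $\varepsilon > 0$, since $4^k \to \infty$ I can choose $N\in\Bbb{N}$ so large that $2d/4^N < \varepsilon$; then the displayed estimate shows $\|\, a_p - a_q \,\| < \varepsilon$ for all $p, q > N$, which is precisely the definition of a Cauchy sequence. I do not anticipate any real obstacle here, since $X$ being a Banach space is not even needed to establish that $\{a_k\}$ is Cauchy (it would only be needed to deduce convergence). The only point requiring care is the initial step of recognizing that the $m$-independence of the bound is what pins every high-index term near the fixed vector $2a_{k+1}-a_k$; once that is noticed, the remainder is a routine triangle-inequality and archimedean-property argument.
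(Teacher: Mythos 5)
Your proposal is correct and is essentially the paper's own argument: the paper likewise applies the hypothesis twice with the same $k$ (for two arbitrary offsets $m$ and $n$) and uses the triangle inequality through the common vector $2a_{k+1}-a_k$ to get $\|\,a_{n+k}-a_{m+k}\,\|\le 2d/4^k$, whence the Cauchy property follows since $2d/4^k\to 0$. Your presentation just makes the ``common center'' $b_k=2a_{k+1}-a_k$ explicit, which is a cosmetic rather than substantive difference.
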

\begin{proof}
For any positive integers $n, m$ we have
\begin{equation*}
\label{a_k0}
\|\, a_{m+k} -2a_{k+1}+a_k\,\|\le \frac{d}{4^k},
\end{equation*}
and
\begin{equation*}
\|\, a_{n+k} -2a_{k+1}+a_k\,\|\le \frac{d}{4^k}.
\end{equation*}
Hence
\begin{eqnarray*}
\label{a_k1}
 \|\, a_{n+k} -a_{m+k}\,\|\le \frac{2d}{4^k}.
\end{eqnarray*}
The latter inequality implies that $\big\{ a_k \big\}_1^\infty$
is a Cauchy sequence.
\end{proof}

\begin{lemma} \label{lemma2.6}
Suppose $f \in KK (S, X)$.
For any $x\in S$,  the limit
\begin{equation}\label{l-7-1}
\lim_{n\to \infty }\frac{1}{4^n}f(x^{2^n})=\widehat f(x),
\end{equation}
exists and it satisfies the relations
\begin{equation}\label{l-7-2}
\widehat f(x^n)=n^2 \, \widehat f(x),
\end{equation}
\begin{equation}\label{l-7-3}
\bigg \| \, \widehat f(x)- \bigg [ \, \frac{1}{2}f(x^2)- f(x) \, \bigg ] \, \bigg \|\le \frac{1}{2} \, c
\end{equation}
for all $x \in S$ and $n \in \Bbb{N}$.
\end{lemma}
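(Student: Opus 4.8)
The plan is to set $a_n=\dfrac{1}{4^n}f(x^{2^n})$ and reduce the whole lemma to this one sequence: the entire statement amounts to showing that $\{a_n\}$ converges, after which both \eqref{l-7-2} and \eqref{l-7-3} fall out by passing to the limit in suitably specialized instances of the estimate \eqref{22s} of Lemma~\ref{lemma2.3}. So the real work is the existence of $\widehat f(x)=\lim_n a_n$, and that is the step I expect to be the crux.

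To produce the limit I would feed a well-chosen exponent into \eqref{22s}. Replacing $x$ by $x^{2^n}$ and taking the exponent equal to $4$ in \eqref{22s} gives
\[
\big\| f(x^{2^{n+2}})+8\,f(x^{2^n})-6\,f(x^{2^{n+1}}) \big\|\le 3c ,
\]
since $(x^{2^n})^4=x^{2^{n+2}}$ and $(x^{2^n})^2=x^{2^{n+1}}$. Dividing by $4^{n+2}=16\cdot 4^n$ and using $\frac{1}{4^n}f(x^{2^{n+1}})=4a_{n+1}$ turns this into the contractive three-term relation
\[
\Big\| a_{n+2}-\tfrac32\,a_{n+1}+\tfrac12\,a_n \Big\|\le \frac{3c}{16\cdot 4^n}.
\]
Setting $d_n=a_{n+1}-a_n$, the left side is exactly $d_{n+1}-\frac12 d_n$, so $\|d_{n+1}\|\le \frac12\|d_n\|+\frac{3c}{16\cdot 4^n}$; unrolling this recursion yields $\|d_n\|\le 2^{-(n-1)}\big(\|d_1\|+\frac{3c}{16}\big)$, whence $\sum_n\|d_n\|<\infty$ and the partial sums $a_n=a_1+\sum_{j<n}d_j$ form a Cauchy sequence in the Banach space $X$ (an elementary Cauchy estimate in the spirit of Lemma~\ref{lemma2.5}). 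Its limit is $\widehat f(x)$, which establishes \eqref{l-7-1}.

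For \eqref{l-7-2} I would again use \eqref{22s} with $x$ replaced by $x^{2^k}$, but now with a general exponent $n\ge 3$, divide by $4^k$, and let $k\to\infty$. Because $(x^{2^k})^{n}=x^{\,n2^k}=(x^{n})^{2^k}$, the leading term $\frac{1}{4^k}f(x^{n2^k})$ tends to $\widehat f(x^n)$ by \eqref{l-7-1} applied to $x^n$, while $a_k,a_{k+1}\to\widehat f(x)$ and the right-hand side tends to $0$. Solving the resulting identity
\[
\widehat f(x^n)+(n-2)n\,\widehat f(x)-2n(n-1)\,\widehat f(x)=0
\]
for $\widehat f(x^n)$ gives $\widehat f(x^n)=\big[2n(n-1)-(n-2)n\big]\widehat f(x)=n^2\widehat f(x)$; the cases $n=1,2$ are immediate from \eqref{l-7-1} (for $n=2$ note $\widehat f(x^2)=\lim_k 4\,a_{k+1}=4\widehat f(x)$).

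Finally, for \eqref{l-7-3} I would specialize \eqref{22s} to $x$ itself with the exponent $2^k$ (valid once $2^k\ge 3$) and divide by $4^k$, obtaining
\[
\Big\| a_k+(1-2^{1-k})f(x)-\tfrac12(1-2^{-k})f(x^2) \Big\|\le \tfrac{(1-2^{1-k})(1-2^{-k})}{2}\,c .
\]
Letting $k\to\infty$, the two coefficients converge to $1$ and $\tfrac12$ and the bound converges to $\tfrac12 c$, which yields $\big\|\widehat f(x)+f(x)-\tfrac12 f(x^2)\big\|\le\tfrac12 c$, that is, \eqref{l-7-3}. The one genuinely delicate point in the whole argument is the convergence step: one must choose the exponent in \eqref{22s} so that the induced recursion on $a_n$ is contractive (the exponent $4$ does this, producing the decay ratio $\tfrac12$), after which passing to the limit in the remaining instances is routine.
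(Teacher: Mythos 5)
Your proof is correct, and it takes a genuinely different---and in places cleaner---route than the paper's. For the convergence \eqref{l-7-1}, the paper specializes \eqref{22s} to dyadic exponents to obtain $\|a_{m+k}-2a_{k+1}+a_k\|\le c/4^k$ and then invokes the Cauchy criterion of Lemma~\ref{lemma2.5}, whereas you derive the contractive three-term recursion $\bigl\|a_{n+2}-\tfrac{3}{2}a_{n+1}+\tfrac{1}{2}a_n\bigr\|\le 3c/(16\cdot 4^n)$ from the exponent-$4$ instance and sum the differences $d_n=a_{n+1}-a_n$; both mechanisms are sound, and yours avoids the paper's intermediate ``there is $n_0$ such that $n>n_0$'' coarsening step. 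The real divergence is at \eqref{l-7-2}: the paper first argues that $\widehat f=f_2$ lies in $KK(S,X)$ (via Lemma~\ref{f(x^2)} and boundedness of $f_2-[\tfrac{1}{2}f(x^2)-f(x)]$), then for each $m\ge 3$ constructs an auxiliary limit $f_m(x)=\lim_k m^{-2k}f_2(x^{m^k})$, shows $f_m-f_2$ is bounded, and uses the two homogeneity properties to force $f_m\equiv f_2$. You bypass this entire construction by passing to the limit in \eqref{22s} applied at $x^{2^k}$ with a general exponent $n\ge 3$, using that \eqref{l-7-1} already holds at the element $x^n$ and that $(x^{2^k})^n=(x^n)^{2^k}$; this is a substantial simplification, and it never needs the fact that $\widehat f\in KK(S,X)$. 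Finally, for \eqref{l-7-3} your limit of the exponent-$2^k$ instance of \eqref{22s} delivers exactly the constant $\tfrac{1}{2}c$ claimed in the statement, whereas the paper's written chain (via \eqref{22s3}, \eqref{22s4}, and \eqref{f_2s4}) only records the weaker bound $c$; the sharper constant is recoverable from the paper's inequality \eqref{22s1a} by a more careful limit, but your argument is the one that actually matches the statement as given.
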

\begin{proof}
From~(\ref{22s}) it follows that
\begin{equation} \label{22s1}
\bigg \|\, \frac{1}{n^2} \, f(x^n) + \left (1-\frac{2}{n} \right ) f(x)-
\frac{1}{2} \left (1-\frac{1}{n} \right ) f(x^2) \,\bigg \|
\le \frac{1}{2} \left (1- \frac{3}{n}+\frac{2}{n^2} \right ) c,
\end{equation}
\begin{equation}
\\
\label{22s1a}
 \bigg \|\, \frac{1}{n^2} \, f(x^n) - \bigg [\frac{1}{2}f(x^2) -
f(x) \bigg ] + \frac{1}{2n} f(x^2) -\frac{2}{n} f(x)\bigg \| \le
\frac{1}{2} c.
\end{equation}
Therefore, there is $n_0$ such that if $n>n_0 $, then
\begin{equation} \label{22s3}
\bigg \|\, \frac{1}{n^2}f(x^n) - \bigg [\frac{1}{2}f(x^2) - f(x)
\bigg ]\, \bigg \|\le  c.
\end{equation}
Therefore, in \eqref{22s1} and \eqref{22s3} replacing $n$ by $2^m$, we have
\begin{eqnarray}
\label{22s4} \bigg \|\, \frac{1}{4^m}\, f(x^{2^m}) - \bigg [ \,
\frac{1}{2}\, f(x^2) - f(x) \, \bigg ]\, \bigg \|\le  c,
\end{eqnarray}
and
\begin{eqnarray}
\label{22s5} \bigg \|\, \frac{1}{4^{m+k}}\, f(x^{2^{m+k}}) - \bigg
[\frac{2}{4^{k+1}}\, f(x^{2^{k+1}}) - \frac{1}{4^k}\, f(x^{2^k})
\bigg ]\, \bigg \|\le \frac{1}{4^k} \, c.
\end{eqnarray}
Denote $\frac{1}{4^k}\, f(x^{2^k})$ by $a_k$. Then
from~$(\ref{22s5})$ we have
\begin{eqnarray}
\label{22s6}
 \|\, a_{m+k} -2a_{k+1}+a_k\,\|\le \frac{1}{4^k} \, c.
\end{eqnarray}
Now from Lemma  \ref{lemma2.5}  it follows that the sequence
$\bigg\{\,a_k :=\frac{1}{4^k}f \left (x^{2^k} \right )\,\bigg\}$
is a Cauchy sequence and thus has a limit. This
limit we denote by $f_2(x)$. So
\begin{eqnarray}
\label{f_2} f_2(x)=\lim_{k\to \infty }\frac{1}{4^k}\, f \left (x^{2^k} \right ) .
\end{eqnarray}
Hence we have
\begin{eqnarray*}
\label{f_2-m} f_2\left (x^{2^m} \right )&=&\lim_{k\to \infty}\frac{1}{4^k}\, f \left (x^{2^{k+m}} \right ) \\
&=& \lim_{k\to \infty }\frac{4^m}{4^{k+m}}\, f\left (x^{2^{k+m}} \right ) \\
&=& 4^m\lim_{k\to \infty }\frac{1}{4^{k+m}}\, f \left (x^{2^{k+m}} \right ) \\
&=& 4^m\, f_2(x).
\end{eqnarray*}
From the relation~$(\ref{22s4})$ it follows that
\begin{eqnarray}
\label{f_2s4} \bigg \|\, f_2(x) - \bigg [ \frac{1}{2}f(x^2) - f(x)
\bigg ]\, \bigg \| \le c.
\end{eqnarray}
Taking into account Lemma \ref{f(x^2)} we see that $f_2\in KK(S,X)$.

\medskip
Now let $m\ge 3$ be a positive integer. Then for any $x\in S$ we have
\begin{equation*}
\bigg  \|\, f_2 \left (x^{m^n} \right ) +(m^n-2)\, m^n\, f_2(x)- \frac{(m^n-1)\, m^n}{2}f_2 \left (x^2 \right )
\,\bigg \|\le \frac{(m^n-2)\, (m^n-1)}{2}\, c.
\end{equation*}
Dividing the both sides of the  last inequality by $m^{2n}$ and simplifying, we have
\begin{equation*}
 \bigg  \|\, \frac{1}{m^{2n}}\, f_2\left (x^{m^n} \right ) +\left (1- \frac{2}{m^n}\right ) f_2(x)
 - \left (\frac{1}{2}- \frac{1}{2m^n} \right )f_2 \left (x^2 \right ) \, \bigg \|
 \le \frac{(m^n-2)\, (m^n-1)}{2\, m^{2n}}\, c.
 \end{equation*}
 Hence we have
\begin{equation*}
\bigg  \|\, \frac{1}{m^{2n}}\, f_2 \left (x^{m^n} \right ) - \bigg   [\frac{1}{2}\, f_2(x^2) -f_2(x) \bigg  ]
  - \frac{2}{m^n}\, f_2(x)+
 \frac{1}{2m^n}\, f_2(x^2)
 \, \bigg  \|\le \frac{1}{2}\, c.
\end{equation*}
Therefore, there is a $n_0$ such that if $n\ge n_0$, then
\begin{equation*}
\bigg  \|\, \frac{1}{m^{2n}}\, f_2\left (x^{m^n} \right ) - \bigg  [\frac{1}{2}f_2 \left (x^2 \right ) -f_2(x) \bigg  ]
  \, \bigg  \|\le  c.
\end{equation*}
From the later relation it follows that
\begin{equation*}
\bigg \|\, \frac{1}{m^{2n}}\, f_2 \left (x^{m^{n+k}} \right ) -
\bigg [\frac{1}{2}\, f_2 \left ( \big (x^{m^k} \big )^2 \right )
-f_2 \left (x^{m^k} \right ) \bigg ] \,\bigg \|\le c.
\end{equation*}
Now dividing the both sides of the last inequality by $m^{2k}$, we obtain
\begin{equation*}
\bigg \|\, \frac{1}{m^{2(n+k)}}\, f_2 \left (x^{m^{n+k}} \right )
- \frac{1}{m^{2k}}\bigg [\frac{4}{2}\, f_2 \left (x^{m^k} \right )
-f_2 \left (x^{m^k} \right ) \bigg ] \, \bigg \|\le
\frac{1}{m^{2k}}\, c
 \end{equation*}
and thus
\begin{equation*}
\bigg  \|\, \frac{1}{m^{2(n+k)}}\, f_2 \left (x^{m^{n+k}} \right )
- \frac{1}{m^{2k}}\, f_2 \left (x^{m^k} \right )  \, \bigg \| \le
\frac{1}{m^{2k}}\, c.
 \end{equation*}
From the last relation it follows that there is a limit
\begin{equation}
f_m(x) = \lim_{n\to \infty}\frac{1}{m^{2k}} \, f_2 \left (x^{m^k} \right ).
\end{equation}
It is clear that for any $q\in \Bbb{N}$ and $x\in S$ the following relations hold:
\begin{equation}
f_m\left (x^{m^q} \right ) = m^{2q} \, f_m (x),\qquad f_m \left (x^{2^q} \right )=4^{q} \, f_m(x).
\end{equation}
Moreover we have
 \begin{eqnarray*}
\bigg \|\;f_m(x)- \bigg [\frac{1}{2}f_2(x^2)-f_2(x) \bigg ] \;
\bigg \|\le c.
\end{eqnarray*}
Taking into account  relation $f_2(x^{2^k})=4^kf_2(x)$ we get
 \begin{eqnarray*}
 \|\;f_m(x)-f_2(x) \;\|\le 2c.
\end{eqnarray*}
Now taking into account  relation $f_m(x^{2^k})=4^kf_m(x)$ we get
 \begin{eqnarray*}
 f_m(x)=f_2(x) \quad \forall x\in S.
\end{eqnarray*}
Now if we  denote $f_2(x)$ by $\widehat f(x)$ we obtain $\widehat
f(x^n)=n^2\widehat f(x)$ and the proof of the lemma is now complete.
\end{proof}

\begin{corollary}\label{corollary2.7}
If $f \in K(S, X)$, then the limit $\widehat f(x) =
\displaystyle{\lim_{n\to \infty }}\frac{1}{4^n}f(x^{2^n})$ exists
and satisfies $\widehat f(x^n)=n^2 \, \widehat f(x) $ for all $x
\in S$ and $n \in \Bbb{N}$. Moreover, $\widehat f(x)\in K(S,X)$
and $\widehat f(x)=\frac{1}{2}f(x^2)-f(x)$.
\end{corollary}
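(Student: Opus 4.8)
The plan is to deduce every assertion from Lemma~\ref{lemma2.6} by specializing its hypotheses to the exact case $c=0$, and then to use the linearity of the Kannappan equation to handle the final membership claim.

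First I would record the elementary but crucial observation that $K(S,X)\subseteq KK(S,X)$ with the constant in \eqref{qkan} taken to be $0$. Indeed, if $f\in K(S,X)$ then $f$ satisfies \eqref{kan} identically, so the quasikannappan inequality \eqref{qkan} holds with $c=0$. Consequently every conclusion of Lemma~\ref{lemma2.6} is available for such an $f$ with $c=0$. In particular the limit $\widehat f(x)=\lim_{n\to\infty}\frac{1}{4^n}f(x^{2^n})$ exists and satisfies $\widehat f(x^n)=n^2\,\widehat f(x)$ for all $x\in S$ and $n\in\Bbb{N}$, which already establishes the first two assertions.

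Next I would read off the explicit formula directly from the estimate \eqref{l-7-3}, which asserts $\big\|\widehat f(x)-\big[\tfrac{1}{2}f(x^2)-f(x)\big]\big\|\le\tfrac{1}{2}c$. Putting $c=0$ collapses this inequality to the equality $\widehat f(x)=\tfrac{1}{2}f(x^2)-f(x)$ for all $x\in S$, as claimed.

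Finally, to verify that $\widehat f\in K(S,X)$, I would exploit that the Kannappan equation \eqref{kan} is linear and homogeneous in the unknown function, so that $K(S,X)$ is a linear subspace of the space of all maps $S\to X$. By Lemma~\ref{f(x^2)}(2) the mapping $\phi(x)=f(x^2)$ lies in $K(S,X)$ whenever $f$ does; hence $\widehat f=\tfrac{1}{2}\phi-f$, being a linear combination of two members of $K(S,X)$, again belongs to $K(S,X)$. I do not anticipate any genuine obstacle: the whole statement is a specialization of the already-proved Lemma~\ref{lemma2.6} to the exact case, the only points deserving explicit mention being the inclusion $K(S,X)\subseteq KK(S,X)$ and the linearity of $K(S,X)$, both of which are immediate.
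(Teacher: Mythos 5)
Your proof is correct and follows essentially the route the paper intends: the paper states Corollary~\ref{corollary2.7} without a separate proof, precisely as an immediate specialization of Lemma~\ref{lemma2.6} to the exact case. Your filled-in details --- the inclusion $K(S,X)\subseteq KK(S,X)$ with constant $0$, the collapse of \eqref{l-7-3} to the equality $\widehat f(x)=\tfrac{1}{2}f(x^2)-f(x)$, and the use of Lemma~\ref{f(x^2)}(2) together with the linearity of $K(S,X)$ to get $\widehat f\in K(S,X)$ --- are exactly the intended argument.
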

\begin{lemma} \label{widetildef}  
Let the function $f:S\to X$ satisfy the condition
\begin{equation}
\|\,f(x^2)-2f(x) \,\|\le c
\end{equation}
for some $c > 0$ and all $x \in S$. Then there is a limit
\begin{equation*}
\widetilde f(x) = \lim_{k\to \infty} \frac{1}{2^k} \, f \left (x^{2^k} \right ) ,
\end{equation*}
and for any $m \in \Bbb{N}$ and $x\in S$ the following relations
\begin{eqnarray}
\widetilde f(x^m)=m \, \widetilde f(x), \\
\|\,\widetilde f(x) - f(x)\,\|\leq c.
\end{eqnarray}
hold.
\end{lemma}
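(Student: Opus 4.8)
The plan is to run the usual Hyers iteration on the diagonal orbit $x\mapsto f(x^{2^k})$, exactly as in the quadratic case of Lemma~\ref{lemma2.6} but with the normalizing factor $2^k$ in place of $4^k$. Put $a_k=\frac{1}{2^k}f(x^{2^k})$. Applying the hypothesis $\|f(y^2)-2f(y)\|\le c$ to $y=x^{2^k}$ gives $\|f(x^{2^{k+1}})-2f(x^{2^k})\|\le c$, hence
\[
\|a_{k+1}-a_k\|=\tfrac{1}{2^{k+1}}\,\|f(x^{2^{k+1}})-2f(x^{2^k})\|\le \tfrac{c}{2^{k+1}}.
\]
Summing this geometric bound yields $\|a_n-a_m\|\le c\,2^{-m}$ for $n>m$ (a simpler variant of the Cauchy criterion of Lemma~\ref{lemma2.5}), so $\{a_k\}$ is Cauchy and, $X$ being complete, $\widetilde f(x)=\lim_k a_k$ exists. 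The estimate $\|\widetilde f(x)-f(x)\|\le c$ then comes for free: since $a_0=f(x)$, telescoping gives $\|a_n-a_0\|\le\sum_{k=0}^{n-1}2^{-(k+1)}c=c\,(1-2^{-n})<c$, and letting $n\to\infty$ finishes it.

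Homogeneity for powers of two is immediate by reindexing the defining limit: $\widetilde f(x^2)=\lim_k 2^{-k}f(x^{2^{k+1}})=2\lim_k 2^{-(k+1)}f(x^{2^{k+1}})=2\widetilde f(x)$, whence $\widetilde f(x^{2^q})=2^q\widetilde f(x)$ by induction. In particular $\widetilde f$ satisfies the doubling identity \emph{exactly}, $\widetilde f(y^2)=2\widetilde f(y)$.

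The substantive step is $\widetilde f(x^m)=m\,\widetilde f(x)$ for arbitrary $m$, and here I would imitate the $f_m=f_2$ device of Lemma~\ref{lemma2.6}. Form $b_n=\frac{1}{m^n}\widetilde f(x^{m^n})$. Reindexing as above shows that any limit $g_m=\lim_n b_n$ automatically satisfies $g_m(x^m)=m\,g_m(x)$, and the exact doubling of $\widetilde f$ propagates to $g_m(x^{2^q})=2^q g_m(x)$. Since $g_m$ and $\widetilde f$ then share the power-of-two scaling while lying a bounded distance apart, $\|g_m(x)-\widetilde f(x)\|=2^{-q}\|g_m(x^{2^q})-\widetilde f(x^{2^q})\|\to 0$ forces $g_m=\widetilde f$, and the $m$-scaling of $g_m$ transfers to give $\widetilde f(x^m)=m\,\widetilde f(x)$.

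The one place I expect genuine difficulty is the convergence of $b_n$, which is equivalent to a uniform bound $\|\widetilde f(y^m)-m\,\widetilde f(y)\|\le\mathrm{const}$, i.e. (via $\|\widetilde f-f\|\le c$) to control of $\|f(y^m)-mf(y)\|$. Such $m$-fold control does \emph{not} follow from the bare diagonal inequality $\|f(y^2)-2f(y)\|\le c$ alone, since a diagonal Cauchy bound leaves the off-diagonal values undetermined, and there is no linear counterpart of the $n$-variable estimate of Lemma~\ref{lemma2.2} at one's disposal. I would therefore supply this estimate from the ambient situation in which the lemma is used — typically a genuine Cauchy-type bound $\|f(xy)-f(x)-f(y)\|\le c'$ for the additive remainder on the abelian (or cancellative) semigroups of interest — after which the $m$-power estimate, and with it the full homogeneity, follow in a routine manner.
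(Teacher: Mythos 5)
Your first two paragraphs coincide with what the paper intends: the paper's entire proof of this lemma is the single sentence ``The proof is similar to the proof of the previous lemma,'' i.e.\ to Lemma~\ref{lemma2.6}, and the iteration you run (the factor $2^k$ replacing $4^k$, the geometric Cauchy bound, the telescoped estimate $\|\widetilde f(x)-f(x)\|\le c$, and the reindexing that gives $\widetilde f(x^{2^q})=2^q\widetilde f(x)$) is exactly that adaptation.

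Your diagnosis of the last step is also correct, and it exposes a genuine defect in the lemma as stated, not in your argument. From the diagonal hypothesis $\|f(x^2)-2f(x)\|\le c$ alone, the relation $\widetilde f(x^m)=m\,\widetilde f(x)$ for general $m$ is simply not a consequence: on the free semigroup on one generator $x$, write $n=2^a m$ with $m$ odd and put $f(x^n)=2^a\psi(m)$ for an arbitrary function $\psi$ on odd integers; then $f(y^2)=2f(y)$ exactly for every $y$, so $\widetilde f=f$, while $\widetilde f(x^3)=\psi(3)$ need not equal $3\psi(1)=3\,\widetilde f(x)$. Hence no argument can close the gap you flagged, and the paper's appeal to ``similarity'' with Lemma~\ref{lemma2.6} is not legitimate as written: the proof of that lemma uses the quasikannappan hypothesis in an essential way (through Lemma~\ref{lemma2.3}) to control $f(x^n)$ for \emph{all} $n$, whereas the present lemma assumes nothing beyond the doubling inequality.

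Your proposed repair is the right one, and the missing estimate is available inside the paper rather than from an ambient additive Cauchy bound. In the only place the lemma is invoked (Theorem~\ref{theorem2.10}), it is applied to $\varphi=f-\widehat f$ with $f\in KK(S,X)$, and there $\varphi\in KK(S,X)$ as well (Lemma~\ref{f-widetatf}). Under this extra hypothesis, Lemma~\ref{lemma2.3} gives, for $n\ge 3$, $\bigl\|\varphi(x^n)+(n-2)n\varphi(x)-\tfrac{(n-1)n}{2}\varphi(x^2)\bigr\|\le\tfrac{(n-2)(n-1)}{2}c$, and combining this with $\|\varphi(x^2)-2\varphi(x)\|\le c$ yields $\|\varphi(x^n)-n\varphi(x)\|\le (n-1)^2c$ --- precisely the ``linear counterpart of Lemma~\ref{lemma2.2}'' you said was unavailable. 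With this uniform bound your own scaling argument finishes the proof directly: since $(x^m)^{2^k}=(x^{2^k})^m$, we get $\|2^{-k}\varphi((x^{2^k})^m)-m\,2^{-k}\varphi(x^{2^k})\|\le 2^{-k}(m-1)^2c\to 0$, hence $\widetilde\varphi(x^m)=m\,\widetilde\varphi(x)$, with no need for the $g_m$ device. So the correct resolution is to add the hypothesis $f\in KK(S,X)$ to the statement (harmless for every use the paper makes of it); with that amendment your proof is complete, and it is more careful than the paper's.
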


\begin{proof}
The proof is similar to the proof of the previous lemma.
\end{proof}

\begin{lemma} \label{f-widetatf}   
For any $f\in KK(S,X)$, the function $\varphi = f -\widehat f$
satisfies inequality
 \begin{eqnarray}
 \label{f-widetatf-1}
\|\; \varphi(x^2)-2\varphi(x)\;\|\le c
\end{eqnarray}
for some positive $c$ and any $x\in S$.
\end{lemma}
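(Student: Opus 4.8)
The plan is to reduce the asserted inequality directly to the estimate \eqref{l-7-3} already established in Lemma \ref{lemma2.6}, so that essentially no new work is required. First I would expand the left-hand side using the definition $\varphi = f - \widehat f$, writing
\[
\varphi(x^2) - 2\varphi(x) = f(x^2) - \widehat f(x^2) - 2f(x) + 2\widehat f(x).
\]

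The one substantive step is to invoke the homogeneity relation \eqref{l-7-2}, namely $\widehat f(x^n) = n^2\,\widehat f(x)$, in the special case $n = 2$. This gives $\widehat f(x^2) = 4\,\widehat f(x)$, whence the $\widehat f$-contributions collapse to $-\widehat f(x^2) + 2\widehat f(x) = -2\,\widehat f(x)$, and therefore
\[
\varphi(x^2) - 2\varphi(x) = f(x^2) - 2f(x) - 2\,\widehat f(x).
\]

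Finally I would recognize the right-hand side, up to the factor $-2$, as the quantity whose norm is controlled by \eqref{l-7-3}: indeed
\[
f(x^2) - 2f(x) - 2\,\widehat f(x) = -2\Bigl(\widehat f(x) - \bigl[\tfrac12 f(x^2) - f(x)\bigr]\Bigr),
\]
and Lemma \ref{lemma2.6} bounds the norm of the bracketed difference by $\tfrac12 c$. Multiplying that bound by $2$ yields $\|\varphi(x^2) - 2\varphi(x)\| \le c$, which is precisely the claim, with the same constant $c$ coming from the quasikannappan inequality \eqref{qkan} for $f$.

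I do not expect a genuine obstacle here; the entire content has been pre-packaged into Lemma \ref{lemma2.6}, and the only points demanding care are the sign bookkeeping and the correct use of the $n = 2$ instance of the homogeneity relation. It is worth noting that this lemma is exactly the bridge needed to apply Lemma \ref{widetildef} to $\varphi$: having established $\|\varphi(x^2) - 2\varphi(x)\| \le c$, one may then form $\widetilde\varphi(x) = \lim_{k\to\infty} 2^{-k}\varphi(x^{2^k})$ and extract an additive-type piece, so I would organize the write-up with that downstream use in mind.
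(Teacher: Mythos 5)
Your proposal is correct and follows essentially the same route as the paper: expand $\varphi(x^2)-2\varphi(x)$, apply the $n=2$ case of \eqref{l-7-2} to replace $\widehat f(x^2)$ by $4\widehat f(x)$, and then bound the resulting expression $f(x^2)-2f(x)-2\widehat f(x)$ by $c$ via \eqref{l-7-3}. The only difference is cosmetic ordering of when the factor of $2$ is applied to the bound from \eqref{l-7-3}, so nothing further is needed.
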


\begin{proof}
Let $f\in KK(S,X)$.
Then $f$ satisfies relation~$(\ref{qkan})$. Hence from
~$(\ref{l-7-3})$, we get
\begin{eqnarray*}
\|\; 2\widehat f(x)-f(x^2)+2f(x)\;\|\le c.
\end{eqnarray*}
Now we obtain
 \begin{align*}
\|\; \varphi(x^2)-2\varphi(x)\;\|
&=\|\; f(x^2)-\widehat f(x^2) - 2f(x)+2\widehat f(x)\;\| \\
&=\|\; f(x^2)-4\widehat f(x) - 2f(x)+2\widehat f(x)\;\| \\
&=\|\; f(x^2) - 2f(x)-2\widehat f(x)\;\| \\
&\le c
\end{align*}
and the proof of the lemma is complete.
\end{proof}
From above lemma and Corollary~\ref{corollary2.7} we get the
following Corollary.
\begin{corollary}\label{corollary2.9}
If $f \in K(S, X)$, then the function defined by $\phi = f -
\widehat{f}$ satisfies $\phi ( x^2 ) = 2 \, \phi (x)$ for all $x
\in S$ and belongs to $K(S,X)$.
\end{corollary}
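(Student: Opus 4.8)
The plan is to obtain the statement by specializing the approximate results of this section to the exact case. Since an $f\in K(S,X)$ satisfies \eqref{kan}, it also satisfies \eqref{qkan} with $c$ taken as small as we wish, so $K(S,X)\subseteq KK(S,X)$ and every estimate proved for quasikannappan mappings is at my disposal in its exact form. Two things have to be checked: that $\phi=f-\widehat f$ again lies in $K(S,X)$, and that $\phi$ satisfies the \emph{exact} relation $\phi(x^2)=2\phi(x)$.

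For the membership I would only invoke the fact that the defining relation \eqref{kan} is linear in the unknown map, so that $K(S,X)$ is a linear subspace of all functions $S\to X$. By hypothesis $f\in K(S,X)$, and Corollary~\ref{corollary2.7} tells me that $\widehat f\in K(S,X)$; hence the difference $\phi=f-\widehat f$ belongs to $K(S,X)$ with no further work.

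For the functional relation I would feed in the \emph{exact} identities supplied by Corollary~\ref{corollary2.7}, namely $\widehat f(x)=\tfrac12 f(x^2)-f(x)$ and the homogeneity $\widehat f(x^n)=n^2\widehat f(x)$, the latter giving $\widehat f(x^2)=4\widehat f(x)$. Substituting and simplifying,
\begin{align*}
\phi(x^2)-2\phi(x)
&=\big(f(x^2)-\widehat f(x^2)\big)-2\big(f(x)-\widehat f(x)\big)\\
&=f(x^2)-2f(x)-2\widehat f(x)=0,
\end{align*}
the last equality being the relation $2\widehat f(x)=f(x^2)-2f(x)$ read off from the formula for $\widehat f$. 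This yields $\phi(x^2)=2\phi(x)$. Alternatively, the same conclusion follows from Lemma~\ref{f-widetatf}: because $f$ is a genuine Kannappan mapping the constant in \eqref{qkan} may be taken to be $0$, so the bound $\|\phi(x^2)-2\phi(x)\|\le c$ of that lemma collapses to an equality.

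I do not expect a real obstacle here, as the statement is a corollary assembled from Lemma~\ref{f-widetatf} and Corollary~\ref{corollary2.7}. The single point needing care is to confirm that all the error terms genuinely vanish when one passes from $KK(S,X)$ to $K(S,X)$: the approximate versions only assert bounds of the form $\le c$, and the whole substance of the argument is that an exact solution of \eqref{kan} permits $c$ to be chosen arbitrarily small, thereby upgrading every such bound to an equality. Granting this, the remaining manipulations are purely formal and are exactly those displayed above.
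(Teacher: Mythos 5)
Your proposal is correct and follows essentially the same route as the paper, which derives this corollary precisely from Corollary~\ref{corollary2.7} (the exact identity $\widehat f(x)=\tfrac12 f(x^2)-f(x)$ and $\widehat f\in K(S,X)$) together with Lemma~\ref{f-widetatf}; your direct substitution and the linearity observation simply make explicit what the paper leaves to the reader. The only point worth noting is that your alternative argument via Lemma~\ref{f-widetatf} requires knowing that the constant in its conclusion is the same $c$ as in the hypothesis \eqref{qkan} (true from its proof, though the lemma's statement only says ``for some positive $c$''), but your primary computation avoids this entirely.
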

Denote by $PK_4(S,X)$ and $PK_2(S,X)$ the subspaces of $KK(S,X)$
consisting of functions $f$ satisfying
\begin{equation*}
f(x^k) = k^2f(x)\;\;\forall \, k\in \Bbb{N}, \,\, \forall \, x \in S,
\end{equation*}
and
\begin{equation*}
f(x^k) = kf(x)\;\;\forall \, k\in \Bbb{N}, \,\, \forall \, x \in S,
\end{equation*}
respectively.

\begin{theorem}\label{theorem2.10}
\label{decomposition} For any semigroup $S$ we have the following
decomposition:
\begin{eqnarray*}
KK(S,X)=PK_4(S,X)\oplus PK_2(S,X)\oplus B(S,X),
\end{eqnarray*}
where $B(S,X)$ denotes the space of all bounded mappings from $S$ to $X$.
\end{theorem}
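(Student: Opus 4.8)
The plan is to peel off the two homogeneous pieces one at a time, using the limiting constructions of the previous lemmas, and then to separate the three summands by their growth rates along the powers $x^{2^n}$. Fix $f \in KK(S,X)$ with quasikannappan constant $c$. First I would invoke Lemma~\ref{lemma2.6} to produce $\widehat f(x) = \lim_{n\to\infty} 4^{-n} f(x^{2^n})$; it satisfies $\widehat f(x^n) = n^2 \widehat f(x)$ for all $n$, and by~\eqref{l-7-3} it differs from $\tfrac12 f(x^2) - f(x)$ by a bounded term. Since $x \mapsto f(x^2)$ lies in $KK(S,X)$ by Lemma~\ref{f(x^2)}, so does $\tfrac12 f(x^2) - f(x)$, and adding a bounded term (which is automatically in $KK(S,X)$) keeps $\widehat f$ in $KK(S,X)$. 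Hence $\widehat f \in PK_4(S,X)$, the first summand.

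Next I would set $\varphi = f - \widehat f \in KK(S,X)$. By Lemma~\ref{f-widetatf} it obeys $\|\varphi(x^2) - 2\varphi(x)\| \le c$, which is precisely the hypothesis of Lemma~\ref{widetildef}. Applying that lemma to $\varphi$ yields $\widetilde\varphi(x) = \lim_{k\to\infty} 2^{-k}\varphi(x^{2^k})$ with $\widetilde\varphi(x^m) = m\widetilde\varphi(x)$ and $\|\widetilde\varphi - \varphi\| \le c$. The last bound shows $\varphi - \widetilde\varphi \in B(S,X)$; as $B(S,X) \subseteq KK(S,X)$, we get $\widetilde\varphi = \varphi - (\varphi - \widetilde\varphi) \in KK(S,X)$, and with its homogeneity this gives $\widetilde\varphi \in PK_2(S,X)$. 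Collecting terms,
\begin{equation*}
f = \widehat f + \widetilde\varphi + (\varphi - \widetilde\varphi),
\end{equation*}
so $KK(S,X) = PK_4(S,X) + PK_2(S,X) + B(S,X)$.

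The step I expect to be the crux is showing the sum is direct. Suppose $g_4 + g_2 + g_b = 0$ with $g_4 \in PK_4$, $g_2 \in PK_2$, $g_b \in B$. Evaluating at $x^{2^n}$ and using $g_4(x^{2^n}) = 4^n g_4(x)$ and $g_2(x^{2^n}) = 2^n g_2(x)$ gives
\begin{equation*}
4^n g_4(x) + 2^n g_2(x) + g_b(x^{2^n}) = 0 .
\end{equation*}
Dividing by $4^n$ and letting $n \to \infty$ kills the last two terms (the last because $g_b$ is bounded), so $g_4 \equiv 0$. Then $g_2 = -g_b$ is bounded, while $\|g_2(x^{2^n})\| = 2^n\|g_2(x)\|$; boundedness forces $g_2(x) = 0$ for every $x$, hence $g_2 \equiv 0$ and then $g_b \equiv 0$. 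This clean separation of the growth rates $4^n$, $2^n$ and $O(1)$ is exactly what makes the three subspaces intersect only in $\{0\}$, and it is where the homogeneity identities at the exponents $2^n$ do the essential work. With directness in hand, the asserted decomposition $KK(S,X) = PK_4(S,X) \oplus PK_2(S,X) \oplus B(S,X)$ follows.
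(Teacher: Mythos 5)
Your proof is correct and follows essentially the same route as the paper: it builds $\widehat f$ via Lemma~\ref{lemma2.6}, sets $\varphi = f - \widehat f$, applies Lemma~\ref{f-widetatf} and Lemma~\ref{widetildef} to extract $\widetilde\varphi \in PK_2(S,X)$, and concludes that the remainder $f - \widehat f - \widetilde\varphi$ is bounded. The only difference is that you explicitly verify directness of the sum via the growth rates $4^n$, $2^n$, $O(1)$ along $x^{2^n}$, a step the paper dismisses as clear; this is a worthwhile addition but not a different approach.
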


\begin{proof}
It is clear that  $KK(S,X)$ is the direct sum of
$PK_4(S,X)$, $PK_2(S,X)$ and  $B(S,X)$. To see this, let
$f$ be a quasikannappan function satisfying
inequality~$(\ref{qkan})$. Then function $\varphi=f-\widehat f$
belongs to $KK(S,X)$ and satisfies
relation \eqref{f-widetatf-1}. Now from Lemma \ref{widetildef} and Lemma
\ref{f-widetatf}
it follows that $\widetilde\varphi\in KK(S,X)$ and
\begin{eqnarray*}
\|\; \widetilde\varphi(x)-\varphi(x) \;\|\le c
\end{eqnarray*}
for any $x\in S$. So, the function $\delta(x)=f(x)-\widehat
f(x)-\widetilde\varphi(x) $ is bounded. We can rewrite the last relation
as $f(x)=\widehat f(x)+ \widetilde\varphi(x)+\delta(x) $
and hence $KK(S,X)=PK_4(S,X)\oplus PK_2(S,X)\oplus B(S,X)$.
\end{proof}


\section{Stability}

\begin{definition}
Let $S$ be a semigroup and $X$ be a Banach space.
The functional equation~$(\ref{kan})$ is said to be stable for the pair $(S,X)$
if for any $f: S\to X$  satisfying inequality
\begin{equation} \label{st}
\|\; f(xyz)+f(x)+f(y)+f(z)-f(xy)-f(xz)-f(yz)\;\|\le d
\end{equation}
for some positive real number $d$ and all $x,y,z\in S$, then there is a solution
$\varphi$ of~$(\ref{kan})$ such that the difference $f-\varphi$ is
a bounded mapping.
\end{definition}

The subspace of $K(S, X)$ consisting of functions belonging to $PK_4 (S, X)$
will be denoted by $K_4(S, X)$. In other words $K_4(S, X)$ consists of
solutions of \eqref{kan} satisfying the additional condition
\begin{equation*}
f(x^k) = k^2 \, f(x)\;\;\forall \, k\in \Bbb{N},  \,\, \forall \, x \in S.
\end{equation*}
The subspace of $K(S, X)$ consisting of functions belonging to $PK_2 (S, X)$
will be denoted by $K_2(S, X)$. In other words $K_2(S, X)$ consists of
solutions of \eqref{kan} satisfying the additional condition
\begin{equation*}
f(x^k) = k \,  f(x)\;\;\forall \, k\in \Bbb{N},   \,\, \forall \, x \in S.
\end{equation*}

\begin{propos}\label{proposition3.2}
$K(S,X)=K_4(S,X)\oplus K_2(S,X)$
for any semigroup $S$ and any Banach space $X$.
\end{propos}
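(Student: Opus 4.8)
The plan is to split an arbitrary exact solution $f\in K(S,X)$ into a quadratic-type summand lying in $K_4(S,X)$ and a linear-type summand lying in $K_2(S,X)$, and then to verify that these two subspaces meet only in the zero function. Everything needed is already in place from the corollaries and lemmas of Section~2, so the argument is mostly a matter of assembling them in the right order.

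First I would apply Corollary~\ref{corollary2.7} to $f$. It produces $\widehat f(x)=\lim_{n\to\infty}\frac{1}{4^n}f(x^{2^n})$, which again belongs to $K(S,X)$ and satisfies $\widehat f(x^n)=n^2\,\widehat f(x)$ for all $n\in\Bbb{N}$; by definition this says precisely that $\widehat f\in K_4(S,X)$. Next I would set $\varphi=f-\widehat f$. By Corollary~\ref{corollary2.9}, $\varphi\in K(S,X)$ and $\varphi(x^2)=2\varphi(x)$ for every $x\in S$. Iterating this single identity gives $\varphi(x^{2^k})=2^k\varphi(x)$, hence $\frac{1}{2^k}\varphi(x^{2^k})=\varphi(x)$ for all $k$. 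Applying Lemma~\ref{widetildef} to $\varphi$ (legitimate, since $\|\varphi(x^2)-2\varphi(x)\|\le c$ holds trivially) shows that the limit $\widetilde\varphi(x)=\lim_{k\to\infty}\frac{1}{2^k}\varphi(x^{2^k})$ exists and satisfies $\widetilde\varphi(x^m)=m\,\widetilde\varphi(x)$ for all $m\in\Bbb{N}$. But the computation just made identifies this limit as $\widetilde\varphi=\varphi$, so $\varphi(x^m)=m\,\varphi(x)$ for all $m$, and therefore $\varphi\in K_2(S,X)$. The decomposition $f=\widehat f+\varphi$ then realizes $f$ as a sum of an element of $K_4(S,X)$ and an element of $K_2(S,X)$, giving $K(S,X)=K_4(S,X)+K_2(S,X)$.

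To finish, I would check that the sum is direct. Take any $g\in K_4(S,X)\cap K_2(S,X)$. Then $g(x^k)=k^2g(x)$ and $g(x^k)=kg(x)$ hold simultaneously for every $k\in\Bbb{N}$; choosing $k=2$ gives $4g(x)=2g(x)$, so $g(x)=0$ for all $x\in S$. Hence $K_4(S,X)\cap K_2(S,X)=\{0\}$ and the decomposition is direct.

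The one step requiring care is the middle one: Corollary~\ref{corollary2.9} only delivers the doubling identity $\varphi(x^2)=2\varphi(x)$, and the real content is upgrading this to full homogeneity $\varphi(x^m)=m\,\varphi(x)$. This is where Lemma~\ref{widetildef} does the work, the key observation being that the averaging limit $\widetilde\varphi$ collapses back onto $\varphi$ itself because the sequence $\frac{1}{2^k}\varphi(x^{2^k})$ is constant. Once that is seen, the remaining verifications are immediate consequences of the definitions of $K_4(S,X)$ and $K_2(S,X)$.
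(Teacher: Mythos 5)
Your proof is correct and follows essentially the same route as the paper's: both decompose $f = \widehat f + \varphi$ with $\widehat f \in K_4(S,X)$ coming from Corollary~\ref{corollary2.7} and $\varphi = f - \widehat f \in K_2(S,X)$ coming from Corollary~\ref{corollary2.9}, and both finish by noting $K_4(S,X)\cap K_2(S,X)=\{0\}$. If anything, you are more explicit than the paper at the one delicate point---upgrading the doubling identity $\varphi(x^2)=2\varphi(x)$ to full homogeneity $\varphi(x^m)=m\,\varphi(x)$, which you justify via Lemma~\ref{widetildef} together with the observation that the sequence $\frac{1}{2^k}\varphi\left(x^{2^k}\right)$ is constant, so $\widetilde\varphi=\varphi$---whereas the paper's proof leaves this step implicit.
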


\begin{proof} It is clear that $K_4(S,X)\cap K_2(S,X)=\{ 0\}$.
Let $f$ be a solution of~$(\ref{kan})$. From
Lemma \ref{f(x^2)},  
Corollary~\ref{corollary2.7} and Corollary~\ref{corollary2.9} it
follows that $f=\widehat f +\varphi$, where $\widehat f\in
K_4(S,X)$ and $\varphi \in K_2(S,X)$.
\end{proof}

\begin{propos} \label{proposition3.3}
The equation \eqref{kan} is stable for the pair $(S,X)$ if and only if
$PK_4(S,X)=K_4(S,X)$ and $PK_2(S,X)=K_2(S,X)$.
\end{propos}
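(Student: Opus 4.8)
The plan is to recast stability as a single identity between subspaces and then read it off from the two decompositions already in hand. First I would observe that a function $f : S \to X$ satisfies the defining inequality~\eqref{st} for some $d > 0$ precisely when $f \in KK(S,X)$, and that the conclusion ``there is a solution $\varphi$ of~\eqref{kan} with $f - \varphi$ bounded'' says exactly that $f \in K(S,X) + B(S,X)$. Since every solution of~\eqref{kan} lies in $KK(S,X)$ (take $c = 0$), every bounded map lies in $KK(S,X)$, and $KK(S,X)$ is closed under addition, the reverse inclusion $K(S,X) + B(S,X) \subseteq KK(S,X)$ is automatic. Hence stability for the pair $(S,X)$ is equivalent to the single identity $KK(S,X) = K(S,X) + B(S,X)$.

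For the direction $(\Leftarrow)$, I would assume $PK_4(S,X) = K_4(S,X)$ and $PK_2(S,X) = K_2(S,X)$ and substitute these into the decomposition of Theorem~\ref{decomposition}, obtaining
\[
KK(S,X) = K_4(S,X) \oplus K_2(S,X) \oplus B(S,X).
\]
Since $K_4(S,X) \oplus K_2(S,X) = K(S,X)$ by Proposition~\ref{proposition3.2}, this yields $KK(S,X) = K(S,X) + B(S,X)$, which is stability by the reformulation above.

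For $(\Rightarrow)$, I would assume stability, i.e. $KK(S,X) = K(S,X) + B(S,X)$. Because the inclusions $K_4(S,X) \subseteq PK_4(S,X)$ and $K_2(S,X) \subseteq PK_2(S,X)$ hold by definition, it suffices to prove the reverse inclusions. I would take $f \in PK_4(S,X)$, write $f = \varphi + b$ with $\varphi \in K(S,X)$ and $b \in B(S,X)$ by stability, and then split $\varphi = \varphi_4 + \varphi_2$ with $\varphi_4 \in K_4(S,X) \subseteq PK_4(S,X)$ and $\varphi_2 \in K_2(S,X) \subseteq PK_2(S,X)$ via Proposition~\ref{proposition3.2}. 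This gives two representations of $f$ in the direct sum $PK_4(S,X) \oplus PK_2(S,X) \oplus B(S,X)$, namely $f = f + 0 + 0$ and $f = \varphi_4 + \varphi_2 + b$. Invoking the uniqueness of the decomposition in Theorem~\ref{decomposition} and matching components forces $\varphi_2 = 0$, $b = 0$, and $\varphi_4 = f$, so $f = \varphi_4 \in K_4(S,X)$; running the same argument for $f \in PK_2(S,X)$ gives $f \in K_2(S,X)$.

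The step I expect to require the most care is the appeal to uniqueness of the direct-sum decomposition in the $(\Rightarrow)$ direction: one must be certain that $\varphi_4$ genuinely lands in the $PK_4$-summand and $\varphi_2$ in the $PK_2$-summand, which is exactly the content of the inclusions $K_4(S,X) \subseteq PK_4(S,X)$ and $K_2(S,X) \subseteq PK_2(S,X)$, so that the two expressions for $f$ may legitimately be compared component by component. Everything else is bookkeeping with the decompositions of Theorem~\ref{decomposition} and Proposition~\ref{proposition3.2}.
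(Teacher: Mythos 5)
Your proposal is correct, and its skeleton matches the paper's: both directions reduce to the decomposition $KK(S,X)=PK_4(S,X)\oplus PK_2(S,X)\oplus B(S,X)$ of Theorem~\ref{theorem2.10} together with $K(S,X)=K_4(S,X)\oplus K_2(S,X)$ of Proposition~\ref{proposition3.2}, and your $(\Leftarrow)$ direction is essentially word-for-word the paper's. The difference lies in $(\Rightarrow)$. The paper never invokes uniqueness of the three-term decomposition abstractly; it verifies the component matching by hand. Writing $\psi=f-\varphi_4-\varphi_2$ (bounded), it applies the operator $f\mapsto\widehat f$ of Lemma~\ref{lemma2.6}, which kills bounded maps and all of $PK_2(S,X)$ while fixing $PK_4(S,X)$ pointwise, to conclude $\varphi_4=f$ in the $PK_4$ case (resp.\ $\varphi_4=0$ in the $PK_2$ case), and then finishes the $PK_2$ case with the scaling argument
\begin{equation*}
k\,|f(x)-\varphi_2(x)|=|f(x^k)-\varphi_2(x^k)|\le d \quad \text{for all } k,
\end{equation*}
which forces $f\equiv\varphi_2$. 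You instead cite the directness of the sum asserted by the $\oplus$ in Theorem~\ref{theorem2.10}. That is legitimate if the theorem is taken at face value, and it makes the argument shorter and cleaner; but be aware that the paper's proof of Theorem~\ref{theorem2.10} only exhibits the \emph{existence} of the decomposition and merely asserts its directness, whereas the paper's proof of the present proposition supplies inline exactly the uniqueness computations ($\widehat\psi\equiv 0$, $\widehat f=f$ or $\widehat f=0$, $\widehat\varphi_2\equiv 0$, plus the $k$-scaling step) that your appeal to uniqueness presupposes. So the two arguments have the same mathematical content; yours buys brevity at the cost of leaning on the unproved (though true, and easily checked by the same scaling computations) directness claim, while the paper's is self-contained on that point.
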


\begin{proof}
Suppose that the equation \eqref{kan} is stable for the pair
$(S,X)$, and assume that $PK_4(S,X)\ne K_4(S,X)$. Let $f\in
PK_4(S,X)\setminus K_4(S,X)$. Then by
Proposition~\ref{proposition3.2} there are $\varphi_4\in K_4(S,X)$
and $\varphi_2\in K_2(S,X)$ such that for some positive $d$ we
have $|f(x)-\varphi_4(x)-\varphi_2(x)|\le d$ for all $x\in S$.
Thus the function $\psi(x)=f(x)-\varphi_4(x)-\varphi_2(x)$ is
bounded. Therefore we get $\widehat \psi =\widehat f
-\widehat\varphi_4(x)-\widehat \varphi_2(x)\equiv 0$. Now taking
into account $\widehat f = f, \, \widehat\varphi_4(x) =
\varphi_4(x), \, \widehat \varphi_2(x) \equiv 0$ we obtain
$f=\widehat\varphi_4(x)=\varphi_4(x)$. Thus we obtain a
contradiction to the assumption $f\in PK_4(S,X)\setminus
K_4(S,X)$.

\medskip
Now assume that $PK_2(S,X)\ne K_2(S,X)$. Let $f\in
PK_2(S,X)\setminus K_2(S,X)$. Then by the last proposition there
are $\varphi_4\in K_4(S,X)$ and $\varphi_2\in K_2(S,X)$ such that
for some positive $d$ we have
$|f(x)-\varphi_4(x)-\varphi_2(x)|\le d$ for all $x\in S$. The
function $\psi(x)=f(x)-\varphi_4(x)-\varphi_2(x)$ is bounded.
Therefore we obtain $\widehat \psi =\widehat f
-\widehat\varphi_4(x)-\widehat \varphi_2(x)\equiv 0$. Now taking
into account $\widehat f=0,\,
\widehat\varphi_4(x)=\varphi_4(x),\widehat \varphi_2(x)\equiv 0$
we get $0=\widehat\varphi_4(x)=\varphi_4(x)$. Hence
$|f(x)-\varphi_2(x)|\le d$.  The latter relation implies
$k\, |f(x)-\varphi_2(x)|=|f(x^k)-\varphi_2(x^k)|\le d$
for all $k \in \Bbb{N}$ and thus we see
that $f\equiv \varphi_2$. So, we obtain a contradiction to the
assumption $f\in PK_2(S,X)\setminus K_2(S,X)$.

\medskip
Therefore if equation~$(\ref{kan})$ is stable for the pair $(S,X)$,
then $PK_4(S,X)=K_4(S,X)$ and $PK_2(S,X)=K_2(S,X)$.

\medskip
Now suppose that $PK_4(S,X)=K_4(S,X)$ and $PK_2(S,X)=K_2(S,X)$.
Let us verify that equation~$(\ref{kan})$ is stable for the pair
$(S,X)$. If $f$ satisfies~(\ref{st}), then $f\in KK(S,X)$ and
there are $f_4\in PK_4(S,X)$, $f_2\in PK_2(S,X)$ and bounded
function $\delta $ such that $f=f_4+f_2+\delta$. Now from the
relations $PK_4(S,X)=K_4(S,X)$ and $PK_2(S,X)=K_2(S,X)$ we get
that $\varphi=f_4+f_2$ is a solution of~(\ref{kan}) such that
$f-\varphi$ is a bounded function. This means that
equation~(\ref{kan}) is stable for the pair $(S,X)$.
This completes the proof of the proposition.
\end{proof}

\begin{theorem}\label{rangeindependent}
Let $S$ be a semigroup, and $X$ and $E$ be two Banach spaces. Then
equation $(\ref{kan})$ is stable for the pair $(S, X)$ if and only it
is stable for the pair $(S, E)$.
\end{theorem}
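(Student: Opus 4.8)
The plan is to use Proposition~\ref{proposition3.3} to translate the analytic notion of stability into the two algebraic identities $PK_4=K_4$ and $PK_2=K_2$, and then to transfer these identities between Banach spaces through the common ground of the scalar field $\Bbb{R}$. For a map $g:S\to Y$ write
\[
Dg(x,y,z)=g(xyz)+g(x)+g(y)+g(z)-g(xy)-g(xz)-g(yz),
\]
so that membership in $K(S,Y)$ means $Dg\equiv 0$, while membership in $KK(S,Y)$ means $\|Dg\|$ is bounded. By Proposition~\ref{proposition3.3} the equation is stable for $(S,X)$ exactly when $PK_4(S,X)=K_4(S,X)$ and $PK_2(S,X)=K_2(S,X)$, and likewise for $E$. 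Hence it suffices to prove, for every nonzero Banach space $Y$ and for $i\in\{2,4\}$, the equivalence
\[
PK_i(S,Y)=K_i(S,Y)\quad\Longleftrightarrow\quad PK_i(S,\Bbb{R})=K_i(S,\Bbb{R}),
\]
since applying it to $Y=X$ and to $Y=E$ and chaining through the scalar case yields the theorem.

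For the implication $PK_i(S,\Bbb{R})=K_i(S,\Bbb{R})\Rightarrow PK_i(S,Y)=K_i(S,Y)$ I would compose with functionals. Take $f\in PK_i(S,Y)$ and a continuous linear functional $\lambda\in Y^{*}$. Linearity of $\lambda$ gives $(\lambda\circ f)(x^{k})=k^{\,e}\,(\lambda\circ f)(x)$, where $e=2$ for $i=4$ and $e=1$ for $i=2$, and $|D(\lambda\circ f)|=|\lambda(Df)|\le\|\lambda\|\,\|Df\|$ is bounded, so $\lambda\circ f\in PK_i(S,\Bbb{R})$. By hypothesis $\lambda\circ f\in K_i(S,\Bbb{R})$, that is, $\lambda\bigl(Df(x,y,z)\bigr)=0$ for all $x,y,z\in S$. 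Since this holds for every $\lambda\in Y^{*}$, the Hahn--Banach theorem forces $Df(x,y,z)=0$; thus $f\in K(S,Y)$ and therefore $f\in K_i(S,Y)$.

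For the reverse implication I would use a fixed nonzero vector. Assume $PK_i(S,\Bbb{R})\ne K_i(S,\Bbb{R})$ and pick $g\in PK_i(S,\Bbb{R})\setminus K_i(S,\Bbb{R})$; choose $v\in Y$ with $v\ne 0$ (possible because $Y\ne\{0\}$) and set $f(x)=g(x)\,v$. Then $f(x^{k})=g(x^{k})\,v=k^{\,e}g(x)\,v=k^{\,e}f(x)$ and $Df(x,y,z)=Dg(x,y,z)\,v$, so $\|Df\|=|Dg|\,\|v\|$ is bounded and $f\in PK_i(S,Y)$. Because $g\notin K_i(S,\Bbb{R})$ there exist $x,y,z$ with $Dg(x,y,z)\ne 0$, whence $Df(x,y,z)=Dg(x,y,z)\,v\ne 0$; so $f\notin K(S,Y)$ and $f\notin K_i(S,Y)$, exhibiting $f\in PK_i(S,Y)\setminus K_i(S,Y)$ and proving the contrapositive. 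The point demanding the most care is the forward direction: one must verify that composition with a functional keeps a map inside $PK_i$ (both the homogeneity relation and the boundedness of the defect) and then invoke Hahn--Banach to pass from ``annihilated by every functional'' back to ``identically zero''; the reverse direction and the bookkeeping with Proposition~\ref{proposition3.3} are routine.
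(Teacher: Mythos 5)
Your proof is correct and follows essentially the same route as the paper: both translate stability into the identities $PK_i=K_i$ via Proposition~\ref{proposition3.3}, transfer from $\Bbb{R}$ to a general Banach space by composing with functionals and invoking Hahn--Banach, and transfer in the other direction by multiplying a real-valued counterexample by a fixed nonzero vector. Your explicit remark that the target space must be nonzero is a minor point of care the paper leaves implicit, but the substance of the argument is the same.
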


\begin{proof} It is clear that we can only consider the case when
$E$ is the set of real numbers $ \mathbb{R}$.
Suppose that the equation~$(\ref{kan})$ is stable for the
pair $(S, X)$. Suppose that~$(\ref{kan})$ is not stable for the
pair $(S, \mathbb{R})$, then either $PK_4(S,\mathbb{R})\ne
K_4(S,\mathbb{R})$ or $PK_2(S,\mathbb{R})\ne K_2(S,\mathbb{R})$.
First, consider the case $PK_4(S,\mathbb{R})\ne
K_4(S,\mathbb{R})$. Let $f\in PK_4(S,\mathbb{R})\setminus
K_4(S,\mathbb{R})$.

\medskip
Let $e\in X$ and $\|e\|=1$. Consider the function $\varphi : S\to
X $ given by the formula $\varphi(x)=f(x)\cdot e$. Then from
relation
\begin{align*}
&\quad\,\, \|\; \varphi(xyz)+\varphi(x)+\varphi(y)+\varphi(z)-\varphi(xy)-\varphi(xz)-\varphi(yz)\;\| \\
&= \|\;f(xyz)\cdot e+f(x)\cdot e+f(y)\cdot e+f(z)\cdot e-f(xy)\cdot
e-f(xz)\cdot e-f(yz)\cdot e\;\| \\
&= \|\;[f(xyz)+f(x)+f(y)+f(z)-f(xy)-f(xz)-f(yz)]\cdot e\;\| \\
&=|\;f(xyz)+f(x)+f(y)+f(z)-f(xy)-f(xz)-f(yz)\;|\cdot \|e\|
\end{align*}
it follows that $\varphi\in PK_4(S,X)\setminus K_4(S,X)$ which
contradicts the fact that the equation \eqref{kan} is stable
for the pair $(S,X)$. Similarly we verify that $PK_2(S,\mathbb{R})=
K_2(S,\mathbb{R})$. So, the equation $(\ref{kan})$ is stable for the pair
$(S,\mathbb{R})$.

\medskip
Now suppose that the equation~$(\ref{kan})$ is stable for the pair
$(S,\mathbb{R})$, that is
$$PK_4(S,\mathbb{R}) = K_4(S,\mathbb{R}) \qquad {\rm and} \qquad
PK_2(S,\mathbb{R})= K_2(S,\mathbb{R}). $$
Denote by $X^*$ the
space of linear bounded functionals on $X$ endowed by functional
norm topology. It is clear that for any $\psi \in PK_i(S,X)$ and
any $\lambda \in X^*$ the function $\lambda \circ \psi $ belongs
to the space $PK_i(S,\mathbb{R})$,\; $i=2,4$ . Indeed, let for
some $c>0$ and any $x,y,z\in S$ we have
\begin{eqnarray*}
\|\; \psi(xyz)+\psi(x)+ \psi(y)+\psi(z)- \psi(xy)- \psi(xz)-
\psi(yz)\;\|\le c.
\end{eqnarray*}
Hence
\begin{align*}
\|\lambda \circ \psi(xyz)+\lambda \circ\psi(x)+\lambda \circ
\psi(y)+\lambda \circ \psi(z)-\lambda \circ \psi(xy)-\lambda \circ
\psi(xz)-\lambda \circ \psi(yz)\| \\ 
\medskip
=\|\; \lambda [\psi(xyz)+ \psi(x)+ \psi(y)+\psi(z)- \psi(xy)-
\psi(xz)- \psi(yz)] \; \|
\le c\|\lambda \|. \qquad\qquad \quad
\end{align*}

\medskip
Obviously, $\lambda \circ \psi(x^n)=n^2\lambda \circ \psi(x)$ if
$\psi\in PK_4(S,X)$ and $\lambda \circ \psi(x^n)=n\lambda \circ
\psi(x)$ if $\psi\in PK_2(S,X)$ for any $x\in S$ and for any $n
\in \mathbb{N}$.

\medskip
Hence the function $\lambda \circ \psi$ belongs to the space
$PK_4(S,\mathbb{R})\oplus PK_2(S,\mathbb{R})$. Let $f : S\to X$
belongs to the set $[PK_4(S,X)\oplus PK_2(S,X)]\setminus
[K_4(S,X)\oplus K_2(S,X)]$. Then there are $x,y,z\in S$ such that
$f(xyz)+f(x)+f(y)+f(z)-f(xy)-f(xz)-f(yz) \ne 0$. Hahn--Banach
Theorem implies that there is a $\ell \in X^*$ such that
$\ell(f(xyz)+f(x)+f(y)+f(z)-f(xy)-f(xz)-f(yz)) \ne 0 $, and we see
that $\ell \circ f$ belongs to the set
 $[PK_4(S,\mathbb{R})\oplus
PK_2(S,\mathbb{R})]\setminus [K_4(S,\mathbb{R})\oplus
K_2(S,\mathbb{R})]$.
This contradiction proves the theorem.
\end{proof}

In view of Theorem \ref{rangeindependent},
it is not important which Banach space is used on the range. Thus one may
consider the stability of the  functional equation~$(\ref{kan})$
on the pair $(S, \Bbb{R} )$.
Let us simplify the following notations: In the case $X=\mathbb{R}$ the spaces
$K(S, \mathbb{R})$, $KK(S, \mathbb{R})$, $KK_4(S, \mathbb{R})$,
$KK_2(S, \mathbb{R})$,  $PK_4(S, \mathbb{R})$, $PK_2(S, \mathbb{R})$
will be denoted by $K(S)$, $KK(S)$, $KK_4(S)$, $KK_2(S)$,  $PK_4(S)$,
$PK_2(S)$, respectively.

\begin{theorem}
In general, the functional equation~$(\ref{kan})$ is not stable on semigroups.
\end{theorem}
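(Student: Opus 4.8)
The plan is to reduce the statement, via the structural machinery already in place, to producing a single semigroup on which the degree-one homogeneous layers of the quasikannappan and kannappan spaces fail to coincide. By Theorem~\ref{rangeindependent} it suffices to treat the scalar case $X=\mathbb{R}$, and by Proposition~\ref{proposition3.3} the equation~\eqref{kan} is \emph{not} stable for $(S,\mathbb{R})$ as soon as $PK_2(S)\ne K_2(S)$ (or $PK_4(S)\ne K_4(S)$). Hence the entire task collapses to exhibiting one semigroup $S$ together with a function lying in $PK_2(S)\setminus K_2(S)$.

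First I would take $S=F$ to be the free group on two generators; since a group is in particular a semigroup, this legitimately witnesses non-stability on ``a semigroup''. On $F$ there exists a pseudocharacter, that is, a homogeneous quasimorphism $f:F\to\mathbb{R}$ with $f(x^{n})=n\,f(x)$ for all $n\in\mathbb{N}$ and $|f(xy)-f(x)-f(y)|\le\varepsilon$ for some $\varepsilon>0$, which is \emph{not} additive. Existence of such $f$ is classical for nonabelian free groups, where the space of pseudocharacters modulo homomorphisms is infinite-dimensional; an explicit $f$ can be obtained by homogenizing a Brooks-type word-counting function. I would then verify $f\in PK_2(S)$: the homogeneity $f(x^n)=n f(x)$ holds by definition, and inserting the quasimorphism estimate into $D(x,y,z)=f(xyz)+f(x)+f(y)+f(z)-f(xy)-f(xz)-f(yz)$, after grouping $xyz=(xy)z$ and expanding each product, makes all $f$-values cancel and leaves $\|D(x,y,z)\|\le 4\varepsilon$, so $f$ is quasikannappan.

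The crux is to show $f\notin K_2(S)$. Suppose instead $f\in K_2(S)$, so that $f$ satisfies~\eqref{kan} exactly, and set $c(x,y)=f(xy)-f(x)-f(y)$. Expanding~\eqref{kan} once by grouping $xyz=(xy)z$ yields $f(xyz)=f(x)+f(y)+f(z)+c(x,y)+c(xy,z)$, and cancelling the linear terms against $f(xy)+f(xz)+f(yz)$ gives $c(xy,z)=c(x,z)+c(y,z)$; grouping instead $xyz=x(yz)$ gives, in the same way, $c(x,yz)=c(x,y)+c(x,z)$. Thus $c$ is biadditive. But $f$ is a quasimorphism, so $c$ is bounded, while biadditivity forces $c(x^{n},y)=n\,c(x,y)$, and a bounded map cannot grow linearly in $n$; hence $c\equiv 0$ and $f$ is additive, contradicting the choice of $f$. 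Therefore $f\in PK_2(S)\setminus K_2(S)$, and by Proposition~\ref{proposition3.3} the equation~\eqref{kan} is not stable on $S$.

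I expect the last step to be the main obstacle, precisely because it rests on two ingredients pulling in opposite directions: the short cocycle computation showing that exactness of~\eqref{kan} forces the Cauchy defect $c$ to be a bihomomorphism, and the genuinely noncommutative fact that a nonabelian free group supports a non-additive pseudocharacter. The first is elementary; the second is where the noncommutativity of $S$ is essential, since on abelian semigroups the equation is stable and no such separating function can exist, so the construction must live on a group where bounded biadditive defects and unbounded families of homogeneous quasimorphisms coexist.
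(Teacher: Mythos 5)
Your proposal is correct, and although it follows the same structural reduction as the paper (via Proposition~\ref{proposition3.3}, produce an element of $PK_2\setminus K_2$ on a free object, the witness being a Brooks-type counting quasimorphism), the execution is genuinely different in two ways. The paper stays inside the free \emph{semigroup} $\mathcal F$ of rank two: it takes the explicit function $\eta(w)=$ number of occurrences of $a^2b^2$ in $w$, checks $\eta\in KK(\mathcal F)$ and $|\eta(x^2)-2\eta(x)|\le 1$, homogenizes to $\widetilde\eta\in PK_2(\mathcal F)$, and then shows $\widetilde\eta\notin K(\mathcal F)$ by a direct numerical evaluation at the single triple $(a,a,b^2)$, where the kannappan expression equals $1\ne 0$; this is entirely self-contained and elementary, with no inverses, no cancellation theory, and no external citations. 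You instead work on the free \emph{group}, import the classical (and not at all trivial) fact that it carries a non-additive homogeneous quasimorphism, and replace the paper's pointwise evaluation with a structural lemma: for any exact solution of~\eqref{kan} the Cauchy defect $c(x,y)=f(xy)-f(x)-f(y)$ satisfies $c(xy,z)=c(x,z)+c(y,z)$ and $c(x,yz)=c(x,y)+c(x,z)$, hence is biadditive, so if $f$ is moreover a quasimorphism then $c(x^n,y)=n\,c(x,y)$ together with boundedness forces $c\equiv 0$ and $f$ additive. That lemma is a genuine addition: it shows instability on \emph{every} group whose space of homogeneous quasimorphisms modulo homomorphisms is nonzero, which is more general and more conceptual than the paper's single computation; the price is the appeal to the classical existence theorem, which the paper's explicit semigroup construction deliberately avoids. (Two cosmetic points: your bound on the kannappan defect of a quasimorphism can be taken as $3\varepsilon$ rather than $4\varepsilon$, since $D(x,y,z)=c(xy,z)-c(x,z)-c(y,z)$; and the reduction via Theorem~\ref{rangeindependent} is not needed, as one may simply take $X=\mathbb{R}$ from the outset.)
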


\begin{proof}
 Let $\mathcal F$ be a free semigroup of rank two with free
generators $a,b$.  For any word $w\in \mathcal F$.  Denote by
$\eta(w)$ the number of occurrences of\; $a^2b^2$\; in $w$. It is
easy to verify that for any $u,v\in \mathcal{F}$
\begin{equation}
\label{eta1}
 \eta(uv)-\eta(u)-\eta(v)\in  \{\;0, 1\; \}.
\end{equation}
So
\begin{eqnarray*}
\label{eta2}
 \eta(uvw)-\eta(u)-\eta(v)-\eta(w)\in  \{\;0, 1, 2\; \} ,
\end{eqnarray*}
and
\begin{eqnarray*}
|\;
\eta(xyz)+\eta(x)+\eta(y)+\eta(z)-\eta(xy)-\eta(xz)-\eta(xz)\;|\le 5.
\end{eqnarray*}
Thus we see that $\eta\in KK(\mathcal F)$, and
\begin{eqnarray*}
|\; \eta(x^2)-  2 \, \eta(x)\;|\le 1\qquad \forall \, x \in \mathcal{F}.
\end{eqnarray*}
Therefore, function
$\widetilde \eta$ defined by
\begin{eqnarray*}
\widetilde \eta(x)=\lim_{n\to \infty }\frac{1}{2^n} \, \eta(x^{2^n})
\end{eqnarray*}
belongs to $PK_2(\mathcal F)$. Let us verify that $\widetilde
\eta$ dos not belong to $K(\mathcal F)$. Indeed, it is clear that
\begin{eqnarray*}
\eta(aab^2)=1,\;
\eta(a)\;=\;\eta(b)\;=\;\eta(b^2)\;=\;\eta(a^2)\;=\;\eta(ab^2)\;=\;0,
\\
\widetilde \eta(aab^2)\;=\;1,\; \widetilde \eta(a)\;=\;\widetilde
\eta(b)\;=\;\widetilde \eta(b^2)\;=\;\widetilde
\eta(a^2)\;=\;\widetilde \eta(ab^2)\;=\;0.
\end{eqnarray*}
Therefore letting $x=a$, $y=a$, $z=b^2$, we get
\begin{eqnarray*}
\widetilde\eta(xyz)+\widetilde\eta(x)+\widetilde\eta(y)+\widetilde\eta(z)-\widetilde\eta(xy)
-\widetilde\eta(xz)-\widetilde\eta(xz)
\qquad\qquad\qquad \qquad\\
= \widetilde\eta(aab^2) + \widetilde \eta(a)+ \widetilde \eta(a)+
\widetilde \eta(b^2)- \widetilde\eta(aa)- \widetilde\eta(ab^2)-
\widetilde \eta(ab^2) =1\ne 0.
\end{eqnarray*}
So $PK_2(\mathcal F) \ne K_2(\mathcal F)$ and
equation~$(\ref{kan})$ is not stable on $\mathcal F$.
\end{proof}

\begin{definition}
An element $x$ of a semigroup $S$ is said to be {\it
periodic}  if there are $n,m\in \mathbb{N}$ such that $n\ne m$ and
$x^n=x^m$. We shall say that the semigroup is periodic if every
element of $S$ is periodic.
\end{definition}

\begin{theorem} \label{st-1}
The equation~$(\ref{kan})$ is stable for any periodic semigroup.
\end{theorem}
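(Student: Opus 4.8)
The plan is to reduce the stability assertion to a statement about the power-homogeneous components isolated in Section~2, and then to exploit periodicity to show that these components collapse to zero. By Proposition~\ref{proposition3.3}, equation~$(\ref{kan})$ is stable for the pair $(S,X)$ precisely when $PK_4(S,X)=K_4(S,X)$ and $PK_2(S,X)=K_2(S,X)$. Since $K_4(S,X)\subseteq PK_4(S,X)$ and $K_2(S,X)\subseteq PK_2(S,X)$ hold for every semigroup by the very definitions of $K_4$ and $K_2$, it suffices to prove the reverse inclusions under the assumption that $S$ is periodic.

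First I would observe that periodicity forces $PK_4(S,X)=\{0\}$. Let $f\in PK_4(S,X)$ and fix $x\in S$. By periodicity there exist $n,m\in\Bbb{N}$ with $n>m$ and $x^n=x^m$. The defining relation $f(x^k)=k^2f(x)$ then yields $n^2f(x)=f(x^n)=f(x^m)=m^2f(x)$, so $(n^2-m^2)f(x)=0$; since $n>m\ge 1$ we have $n^2-m^2\ne 0$, whence $f(x)=0$. As $x$ was arbitrary, $f\equiv 0$, and therefore $PK_4(S,X)=\{0\}=K_4(S,X)$, the zero function being the only candidate on either side.

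Next I would run the identical argument for $PK_2(S,X)$. For $f\in PK_2(S,X)$ and the same pair $n,m$, the relation $f(x^k)=kf(x)$ gives $nf(x)=f(x^n)=f(x^m)=mf(x)$, so $(n-m)f(x)=0$ and again $f(x)=0$. Thus $PK_2(S,X)=\{0\}=K_2(S,X)$. Both equalities required by Proposition~\ref{proposition3.3} now hold, so equation~$(\ref{kan})$ is stable for $(S,X)$; since $X$ was an arbitrary Banach space (or, if one prefers, by Theorem~\ref{rangeindependent}), the theorem follows.

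I do not anticipate any serious obstacle here: the whole content is the elementary cancellation $(n^2-m^2)f(x)=0$ and $(n-m)f(x)=0$, which becomes available precisely because homogeneity under taking powers is incompatible with the coincidence $x^n=x^m$ furnished by periodicity. The only substantive input from earlier in the paper is the decomposition machinery behind Proposition~\ref{proposition3.3}, which translates stability into the vanishing of these two power-homogeneous subspaces.
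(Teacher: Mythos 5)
Your proof is correct and follows essentially the same route as the paper: the paper also rests on the observation that periodicity forces $PK_4(S)=\{0\}$ and $PK_2(S)=\{0\}$ (which it merely declares ``clear,'' whereas you supply the cancellation argument $x^n=x^m$, $n\neq m$ $\Rightarrow$ $f(x)=0$), and then invokes the Section~2 decomposition machinery to conclude stability. The only cosmetic difference is that the paper applies Theorem~\ref{theorem2.10} directly to get $KK(S)=B(S)$, while you route through Proposition~\ref{proposition3.3}, which is itself a consequence of that decomposition.
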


\begin{proof}
It is clear that if $S$ is a periodic semigroup, then
$PK_4(S)=\{0\}$ and $PK_2(S)=\{0\}$. Therefore by Theorem \ref{theorem2.10}
we have
$KK(S)=B(S)$, and
equation~$(\ref{kan})$ is stable on $S$.
\end{proof}

Now let us show that equation~$(\ref{kan})$ is stable on any
abelian semigroup $S$.
It is clear that for any abelian group $A$ and any real-valued symmetric
bimorphism $B(x,y)$ of $A \times A$,  the function $x\to B(x,x)$ belongs to
$K_4(A)$. Denote by $BM(A)$ the set of all real-valued functions $f$ on
$A$ defined by the rule $f(x)=B(x,x)$, where $B(.,.)$ is an
symmetric bimorphism.

\begin{lemma} \label{A_3-1}
Let $A_3$ be an abelian free semigroup of rank three. Then
$PK_4(A_3)=K_4(A_3)=BM(A_3)$.
\end{lemma}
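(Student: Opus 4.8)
The inclusions $BM(A_3)\subseteq K_4(A_3)\subseteq PK_4(A_3)$ are the routine part. That $BM(A_3)\subseteq K_4(A_3)$ follows by substituting $f(x)=B(x,x)$ for a symmetric bimorphism $B$ into \eqref{kan} and expanding $B(xyz,xyz)$ by biadditivity; the degree-two homogeneity $f(x^k)=B(x^k,x^k)=k^2B(x,x)$ then places $f$ in $K_4(A_3)$. Since every solution of \eqref{kan} is a fortiori quasikannappan and the homogeneity condition is identical, $K_4(A_3)\subseteq PK_4(A_3)$ is immediate. Thus the entire content of the lemma is the reverse inclusion $PK_4(A_3)\subseteq BM(A_3)$, and this is the step I would concentrate on.

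To prove it, fix $f\in PK_4(A_3)$ and introduce its polarization $g(x,y):=f(xy)-f(x)-f(y)$. Because $A_3$ is abelian, $g$ is symmetric, and since $f(x^2)=4f(x)$ we have $g(x,x)=2f(x)$; hence if $g$ turns out to be biadditive, then $B:=\tfrac12 g$ is a symmetric bimorphism with $f(x)=B(x,x)$, so that $f\in BM(A_3)$. A direct computation identifies the failure of additivity of $g$ with the kannappan error: $g(xz,y)-g(x,y)-g(z,y)=E(x,y,z)$, where $E(x,y,z):=f(xyz)+f(x)+f(y)+f(z)-f(xy)-f(xz)-f(yz)$ satisfies $|E|\le c$ because $f\in KK(A_3)$. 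So everything reduces to showing that this bounded error vanishes identically.

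The leverage comes from combining the homogeneity of $f$ in both arguments at once. Using $x^ky^k=(xy)^k$ in the abelian semigroup together with $f(w^k)=k^2f(w)$, one checks the exact identity $g(x^n,y^n)=n^2g(x,y)$. On the other hand, symmetry of $g$ and the bound $|E|\le c$ show that for fixed $w$ the map $y\mapsto g(w,y)$ is quasi-additive with constant $c$, so telescoping gives $|g(x^n,y^n)-n\,g(x^n,y)|\le(n-1)c$. Comparing the two and dividing by $n$ yields the uniform estimate $|g(x^n,y)-n\,g(x,y)|\le c$, valid for all $n$ with a bound independent of $n$. Feeding this into the additivity defect, and applying the uniform estimate to $xz$, to $x$, and to $z$, while using $(xz)^n=x^nz^n$ and the quasi-additivity of $g(\,\cdot\,,y)$, one obtains $|n\,E(x,y,z)|\le 4c$ for every $n$. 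Letting $n\to\infty$ forces $E(x,y,z)=0$. Hence $g$ is additive in its first variable, and by symmetry in its second, so $g$ is a symmetric bimorphism and $f(x)=\tfrac12 g(x,x)\in BM(A_3)$, which establishes all three equalities.

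The main obstacle is the passage from quasi-additivity to exact additivity of $g$. Naive telescoping of the first-slot defect only gives the useless bound $|g(x^n,y)-n\,g(x,y)|\le(n-1)c$, whose error grows with $n$; the crucial point is that exploiting the homogeneity simultaneously in both variables collapses this to the $n$-independent estimate $|g(x^n,y)-n\,g(x,y)|\le c$. It is precisely this uniform bound that, after multiplying $E$ by $n$, yields $|nE|\le 4c$ and hence $E\equiv 0$. I would remark that neither freeness nor the rank being three is actually used in this argument --- only that $A_3$ is abelian and that $f$ carries the degree-two homogeneity --- so the same reasoning should serve to extend the identity to arbitrary abelian semigroups in the sequel.
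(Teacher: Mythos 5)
Your proof is correct, but it follows a genuinely different route from the paper's. The paper exploits freeness and the rank: it fixes the free generators $a,b,c$, chooses a symmetric bimorphism $B$ interpolating $f$ at the six values $a,b,c,ab,ac,bc$ (possible precisely because the space of symmetric bimorphisms on $A_3$ is six-dimensional), sets $\varphi=f-B(\cdot,\cdot)\in PK_4(A_3)$, and then shows $\varphi$ vanishes on every monomial $a^pb^qc^\ell$ by substituting powers $a^{pk},b^{qk},c^{\ell k}$ into the quasikannappan inequality and using the homogeneity $\varphi(w^k)=k^2\varphi(w)$ to divide the fixed bound by $k^2$ and let $k\to\infty$. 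Your argument replaces interpolation by polarization: you build the bimorphism intrinsically as $B=\tfrac12 g$ with $g(x,y)=f(xy)-f(x)-f(y)$, identify the additivity defect of $g$ with the kannappan error $E$, and use the exact bi-homogeneity $g(x^n,y^n)=n^2g(x,y)$ to collapse the naive telescoping bound to the $n$-independent estimate $|g(x^n,y)-n\,g(x,y)|\le c$, whence $|nE|\le 4c$ and $E\equiv 0$; all steps check out (in particular the $4c$ bound, obtained from three applications of the uniform estimate plus one application of $|E(x^n,y,z^n)|\le c$ via $(xz)^n=x^nz^n$). What your approach buys is exactly what you observe at the end: nothing uses freeness or rank three, so you have in fact proved $PK_4(A)=K_4(A)=BM(A)$ for every abelian semigroup $A$, which makes the paper's later reduction step superfluous --- in Theorem \ref{abelian} and Proposition \ref{prop3.17} the paper must pass to the subsemigroup generated by three offending elements and pull back along an epimorphism from $A_3$, precisely because its Lemma \ref{A_3-1} is tied to the free object. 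The paper's method has the mild advantage that the same reduction-to-$A_3$ template is reused for the $PK_2$/character case and for $Q(\overline A_3)$ later, but as a proof of this lemma (and of the abelian stability theorem it feeds) your polarization argument is both correct and strictly more general.
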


\begin{proof}
Let $A_3$ be a free abelian semigroup of rank three with free
generators $a,b,c$. The space of symmetric bimorphisms on $A_3$ is
six dimensional. For $f \in K_4( A_3 )$, we choose a symmetric
bimorphism $B(x,y)$ such that $B(a,a)=f(a)$, $B(b,b)=f(b)$,
$B(c,c)=f(c)$, $B(a,b)=\frac{1}{2}[f(ab)-f(a)-f(b)]$,
$B(a,c)=\frac{1}{2}[f(ac)-f(a)-f(c)]$,
$B(b,c)=\frac{1}{2}[f(bc)-f(b)-f(c)]$.
\medskip

Hence, the function $\varphi(x)=f(x)-B(x,x)$ belongs to $PK_4(A_3)$, and
\begin{eqnarray}
\varphi(a)=\varphi(b)=\varphi(c)=\varphi(ab)=\varphi(ac)=\varphi(bc)=0.
\end{eqnarray}
We have $\varphi(a^k)=\varphi(b^k)=\varphi(c^k)=0$ for any $k\in
\Bbb{N}$. Let
\begin{eqnarray}
|\;\varphi(xyz)+\varphi(x)+\varphi(y)+\varphi(z)-\varphi(xy)-\varphi(xz)
-\varphi(yza)\;| \le \delta.
\end{eqnarray}
Then for any $p,q,k \in \Bbb{N}$ we have
\begin{equation*}
|\;\varphi(a^{pk}b^{2qk}) +
\varphi(a^{pk}) +2\varphi(b^{qk})-2\varphi(a^{pk}b^{qk})-\varphi(b^{2qk})\;|\le \delta
\end{equation*}
which simplifies to
\begin{equation*}
|\;\varphi(a^{pk}b^{2qk}) -2\varphi(a^{pk}b^{qk})\;|\le \delta .
\end{equation*}
Hence
\begin{equation*}
k^2 \, |\;\varphi(a^{p}b^{2q}) -2\varphi(a^{p}b^{q})\;|\le \delta
\end{equation*}
which is
\begin{equation*}
|\;\varphi(a^{p}b^{2q}) -2\varphi(a^{p}b^{q})\;|\le \frac{1}{k^2}\delta .
\end{equation*}
Therefore as $k \to \infty$, we obtain $\varphi(a^{p}b^{2q})=2\varphi(a^{p}b^{q})$.
Similarly, for any $p,q,k , \ell \in \Bbb{N}$ we have
\begin{equation*}
\bigg |\;\varphi(a^{pk}b^{\ell qk})+( \ell -1)[
\varphi(a^{pk})+\ell \varphi(b^{qk})]-\ell \varphi(a^{pk}b^{qk})-\frac{\ell (\ell  -1)}{2}\varphi(b^{2qk})\; \bigg |\le
\frac{\ell (\ell -1)}{2}\delta
\end{equation*}
which is
\begin{equation*}
|\;\varphi(a^{pk}b^{\ell qk})-\ell \varphi(a^{pk}b^{qk})\;|\le
\frac{\ell (\ell -1)}{2}\, \delta.
\end{equation*}
Hence
\begin{equation*}
k^2\, |\;\varphi(a^{p}b^{\ell q})-\ell \varphi(a^{p}b^{q})\;|\le
\frac{\ell (\ell -1)}{2}\, \delta
\end{equation*}
which is
\begin{equation*}
|\;\varphi(a^{p}b^{\ell q})-\ell \varphi(a^{p}b^{q})\;|\le
\frac{\ell (\ell -1)}{2k^2}\, \delta.
\end{equation*}
Therefore as $k \to \infty$, we get $\varphi(a^{p}b^{\ell q})=\ell
\varphi(a^{p}b^{q})$. Similarly we obtain $\varphi(a^{\ell
p}b^{q})=\ell \varphi(a^{p}b^{q})$. So, for any $n,m\in \Bbb{N}$,
we get $\varphi(a^n b^m)=n m\, \varphi(ab) = 0$.

\medskip
The same way we obtain equalities $\varphi(a^nc^m) = nm\,
\varphi(ac)=0$ and $\varphi(b^nc^m) = nm\, \varphi(bc)=0$.

\medskip
Now for any $n,m,k, \ell \in \Bbb{N}$ we have
\begin{equation*}
|\;\varphi(a^{pk}b^{qk}c^{\ell k})- \varphi(a^{pk})-
\varphi(b^{qk}) - \varphi(c^{\ell k}) - \varphi(a^{pk}b^{qk})
 - \varphi(a^{pk}c^{\ell k}) - \varphi(b^{qk}c^{\ell k})
\;|\le \delta .
\end{equation*}
Hence $ |\;\varphi(a^{pk}b^{qk}c^{\ell k}) \;|\le \delta $, and we have
$k^2 \, |\;\varphi(a^{p}b^{q}c^{{\ell}}) \;|\le \delta$. Thus
$$|\;\varphi(a^{p}b^{q}c^{\ell}) \;|\le\frac{1}{k^2} \, \delta . $$
By taking the limit
as $k \to \infty$, we see that $\varphi(a^{p}b^{q}c^{\ell}) = 0$.
It means that
$$f(x)=B(x,x)\in BM(A_3)$$
and the proof of the lemma is now finished.
\end{proof}

For any group $G$, we will denote by $X(G)$ the set of real-valued additive
characters of $G$.

\begin{lemma}
Let $A_3$ be an abelian free semigroup of rank three. Then
$PK_2(A_3) = K_2(A_3)=X(A_3)$.
\end{lemma}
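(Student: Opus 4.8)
The plan is to prove the two equalities at once by establishing the chain of inclusions $X(A_3)\subseteq K_2(A_3)\subseteq PK_2(A_3)\subseteq X(A_3)$. The first two inclusions are routine. For $X(A_3)\subseteq K_2(A_3)$, note that an additive character $\chi$ satisfies $\chi(x^k)=k\,\chi(x)$ by iterating $\chi(xy)=\chi(x)+\chi(y)$, and that feeding $\chi$ into the left side of \eqref{kan} gives $2[\chi(x)+\chi(y)+\chi(z)]-2[\chi(x)+\chi(y)+\chi(z)]=0$; hence $\chi\in K(A_3)$ with the degree-one homogeneity, so $\chi\in K_2(A_3)$. The inclusion $K_2(A_3)\subseteq PK_2(A_3)$ holds by definition, since every solution of \eqref{kan} is a quasikannappan mapping carrying the same homogeneity.

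The real content is the inclusion $PK_2(A_3)\subseteq X(A_3)$, which I would prove by adapting the scaling argument of Lemma \ref{A_3-1}. Write $a,b,c$ for the free generators and take $f\in PK_2(A_3)$. Let $\chi$ be the additive character determined by $\chi(a)=f(a)$, $\chi(b)=f(b)$, $\chi(c)=f(c)$ (it exists and is unique by freeness of $A_3$), and put $\varphi=f-\chi$. Since $PK_2(A_3)$ is a linear space containing $\chi$, we have $\varphi\in PK_2(A_3)$ with $\varphi(a)=\varphi(b)=\varphi(c)=0$, and degree-one homogeneity then gives $\varphi(a^k)=\varphi(b^k)=\varphi(c^k)=0$ for all $k\in\Bbb{N}$. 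The goal is to show that $\varphi$ vanishes on every monomial of $A_3$.

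The computations run exactly as in Lemma \ref{A_3-1}, the only change being that $\varphi$ is now homogeneous of degree one. Applying the generalized inequality of Lemma \ref{lemma2.2} to $x_1=a^{pk}$, $x_2=\cdots=x_{\ell+1}=b^{qk}$ and using that $\varphi$ vanishes on all powers of a single generator, one gets $|\varphi(a^{pk}b^{\ell qk})-\ell\,\varphi(a^{pk}b^{qk})|\le\frac{\ell(\ell-1)}{2}\,\delta$; since $a^{pk}b^{\ell qk}=(a^pb^{\ell q})^k$ and $a^{pk}b^{qk}=(a^pb^q)^k$, homogeneity converts this to $k\,|\varphi(a^pb^{\ell q})-\ell\,\varphi(a^pb^q)|\le\frac{\ell(\ell-1)}{2}\,\delta$, and letting $k\to\infty$ yields the exact identity $\varphi(a^pb^{\ell q})=\ell\,\varphi(a^pb^q)$. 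The symmetric substitution gives $\varphi(a^{\ell p}b^q)=\ell\,\varphi(a^pb^q)$, so that $\varphi(a^nb^m)=nm\,\varphi(ab)$ for all $n,m$, with the analogous formulas for the pairs $\{a,c\}$ and $\{b,c\}$. A parallel limiting argument applied to $x=a^{pk}$, $y=b^{qk}$, $z=c^{rk}$ yields $\varphi(a^pb^qc^r)=\varphi(a^pb^q)+\varphi(a^pc^r)+\varphi(b^qc^r)$.

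The decisive step is the homogeneity mismatch that distinguishes the linear case from the quadratic case of Lemma \ref{A_3-1}: computing $\varphi(a^kb^k)$ in two ways, once from $\varphi((ab)^k)=k\,\varphi(ab)$ and once from the product formula $\varphi(a^kb^k)=k^2\,\varphi(ab)$, forces $(k^2-k)\,\varphi(ab)=0$, hence $\varphi(ab)=0$, and likewise $\varphi(ac)=\varphi(bc)=0$. Whereas in Lemma \ref{A_3-1} these off-diagonal values survive and assemble into the bimorphism, here degree-one homogeneity annihilates them. Consequently every one-letter, two-letter, and three-letter monomial is killed, so $\varphi\equiv 0$ and $f=\chi\in X(A_3)$, which closes the chain and gives $PK_2(A_3)=K_2(A_3)=X(A_3)$. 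I expect the only genuine obstacle to be conceptual rather than computational, namely isolating this homogeneity mismatch as the mechanism that forbids a nontrivial quadratic part; the remaining work is the bookkeeping needed to check that the generators together with the two- and three-letter monomials exhaust $A_3$ and that each passage to the limit survives division by $k$.
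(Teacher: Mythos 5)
Your proposal is correct and follows essentially the same route as the paper: subtract the additive character agreeing with $f$ on the generators, use the Lemma \ref{lemma2.2} inequalities together with degree-one homogeneity and a $k\to\infty$ limit to get the product formulas, and then exploit the $k$-versus-$k^2$ homogeneity mismatch (the paper phrases it as $\varphi(u^k)=k\varphi(u)=k^2\varphi(u)$ for $u=a^nb^m$) to force the two-letter and then three-letter values to vanish. The only differences are cosmetic — you derive the three-letter sum identity before annihilating the two-letter terms, while the paper does it afterward — so nothing further is needed.
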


\begin{proof}
Let $f\in PK_2(A_3)$ and $f(a)=p,\, f(b)=q,\, f(c)=r$. Further,
let $\psi $ be an additive character of $A_3$ such that
$\psi(a)=p,\;\; \psi(b)=q,\;\; \psi(c)=r$. Then the function
$\varphi(x)=f(x)-\psi(x)$ belongs to $PK_2(A_3)$ and satisfies the
condition $\varphi(a)= \varphi(b)= \varphi(c)=0$. Let us show that
$\varphi\equiv 0$.

\medskip
Let $\delta $ be a positive number such that for any $x,y,z\in A_3$
\begin{equation*}
 |\;\varphi(xyz)+ \varphi(x)+ \varphi(y)+
\varphi(z)-\varphi(xy)-\varphi(xz)-\varphi(yz)\;| \le \delta.
\end{equation*}

Then for any $p,q,k , \ell \in \Bbb{N}$ we have
\begin{equation*}
\bigg |\;\varphi(a^{pk}b^{\ell qk})+( \ell -1)[
\varphi(a^{pk})+\ell \varphi(b^{qk})]-\ell
\varphi(a^{pk}b^{qk})-\frac{\ell (\ell  -1)}{2}\varphi(b^{2qk})\;
\bigg |\le \frac{\ell (\ell -1)}{2}\delta
\end{equation*}
which is
\begin{equation*}
|\;\varphi(a^{pk}b^{\ell qk})-\ell \varphi(a^{pk}b^{qk})\;|\le
\frac{\ell (\ell -1)}{2}\, \delta.
\end{equation*}
Hence
\begin{equation*}
k\, |\;\varphi(a^{p}b^{\ell q})-\ell \varphi(a^{p}b^{q})\;|\le
\frac{\ell (\ell -1)}{2}\, \delta
\end{equation*}
which is
\begin{equation*}
|\;\varphi(a^{p}b^{\ell q})-\ell \varphi(a^{p}b^{q})\;|\le
\frac{\ell (\ell -1)}{2k}\, \delta.
\end{equation*}
Therefore as $k \to \infty$, we get $\varphi(a^{p}b^{\ell q})=\ell
\varphi(a^{p}b^{q})$. Similarly we obtain $\varphi(a^{\ell
p}b^{q})=\ell \varphi(a^{p}b^{q})$. So, for any $n,m\in \Bbb{N}$,
we get $\varphi(a^n b^m)=n m\, \varphi(ab)$. It follows that for
$u=a^nb^m$ we have
$\varphi(u^k)=\varphi(a^{kn}b^{km})=k^2nm\varphi(ab)=k^2\varphi(u)$
But $\varphi(x)\in PK_2(A_3)$, therefore we have
$\varphi(u^k)=k\varphi(u)=k^2\varphi(u)$. The last relation
implies $\varphi(u)=0$.

\medskip
The same way we obtain equalities $\varphi(a^nc^m) =0$ and
$\varphi(b^nc^m) = 0$ for any $n,m\in \Bbb{N}$.

\medskip
Now for any $n,m,k, \ell \in \Bbb{N}$ we have
\begin{equation*}
|\;\varphi(a^{pk}b^{qk}c^{\ell k})- \varphi(a^{pk})-
\varphi(b^{qk}) - \varphi(c^{\ell k}) - \varphi(a^{pk}b^{qk})
 - \varphi(a^{pk}c^{\ell k}) - \varphi(b^{qk}c^{\ell k})
\;|\le \delta .
\end{equation*}
Hence $ |\;\varphi(a^{pk}b^{qk}c^{\ell k}) \;|\le \delta $, and we
have $k \, |\;\varphi(a^{p}b^{q}c^{{\ell}}) \;|\le \delta$. Thus
$$|\;\varphi(a^{p}b^{q}c^{\ell}) \;|\le\frac{1}{k^2} \, \delta . $$
By taking the limit as $k \to \infty$, we see that
$\varphi(a^{p}b^{q}c^{\ell}) = 0$.

\medskip
Therefore, $\varphi\equiv 0$ and $f\equiv \psi\in X(A_2)$.
\end{proof}

\begin{theorem} \label{abelian} 
Let $A$ be any abelian group. Then
$PK(A)=K_4(A)\oplus K_2(A)$.
\end{theorem}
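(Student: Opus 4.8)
The plan is to reduce the statement to the two preceding lemmas on the free abelian semigroup $A_3$ of rank three and then to quote Proposition~\ref{proposition3.3}. By Proposition~\ref{proposition3.2} we already have $K(A)=K_4(A)\oplus K_2(A)$, and the inclusions $K_4(A)\subseteq PK_4(A)$ and $K_2(A)\subseteq PK_2(A)$ hold by definition. So it suffices to establish the reverse inclusions $PK_4(A)\subseteq K_4(A)$ and $PK_2(A)\subseteq K_2(A)$; in other words, that on an abelian group every function of polynomial type that only \emph{approximately} satisfies \eqref{kan} in fact satisfies it \emph{exactly}. These two equalities give at once the asserted decomposition $PK_4(A)\oplus PK_2(A)=K_4(A)\oplus K_2(A)$ and, via Proposition~\ref{proposition3.3}, stability of \eqref{kan} on $A$.

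First I would set up the reduction to three generators. Fixing an arbitrary triple $x,y,z\in A$, let $\pi\colon A_3\to A$ be the homomorphism from the free abelian semigroup $A_3$ with generators $a,b,c$ determined by $\pi(a)=x$, $\pi(b)=y$, $\pi(c)=z$; such a $\pi$ exists by the universal property of $A_3$, since $A$ is abelian. The key observation is that precomposition with $\pi$ preserves the relevant classes: if $f\in PK_4(A)$ then $f\circ\pi\in PK_4(A_3)$, and if $f\in PK_2(A)$ then $f\circ\pi\in PK_2(A_3)$. Indeed, because $\pi$ is multiplicative the quasikannappan estimate for $f$ transports verbatim to $f\circ\pi$ with the same constant, and the power condition survives since $(f\circ\pi)(u^k)=f(\pi(u)^k)=k^2\,(f\circ\pi)(u)$ in the degree-four case, respectively $=k\,(f\circ\pi)(u)$ in the degree-two case.

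Next I would apply the lemmas. By Lemma~\ref{A_3-1} we have $PK_4(A_3)=K_4(A_3)$, and by the lemma immediately following it $PK_2(A_3)=K_2(A_3)$; hence in either case $f\circ\pi$ is an \emph{exact} solution of \eqref{kan} on $A_3$. Evaluating that identity at the generators $u=a$, $v=b$, $w=c$ and pushing it through $\pi$ yields
\begin{equation*}
f(xyz)+f(x)+f(y)+f(z)-f(xy)-f(xz)-f(yz)=0 .
\end{equation*}
As $x,y,z\in A$ were arbitrary, $f$ satisfies \eqref{kan} identically, so $f\in K(A)$; together with its power condition this places $f$ in $K_4(A)$ (respectively $K_2(A)$). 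This proves $PK_4(A)=K_4(A)$ and $PK_2(A)=K_2(A)$, and Proposition~\ref{proposition3.3} completes the argument.

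The main obstacle is the bookkeeping in the reduction rather than any deep idea: one must verify carefully that $f\circ\pi$ lands in exactly the right space $PK_4(A_3)$ or $PK_2(A_3)$ (both the quasikannappan bound \emph{and} the exact power-scaling identity), and that the single instance of \eqref{kan} at $(a,b,c)$ transported through $\pi$ recovers precisely the instance at the arbitrary triple $(x,y,z)$. Once this is in place the genuinely hard work has already been done inside the two lemmas, whose explicit three-generator computations are what force the vanishing of the quasikannappan defect.
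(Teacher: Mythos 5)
Your proposal is correct and follows essentially the same route as the paper: both arguments fix a triple $x,y,z\in A$, pull the problem back to the free abelian semigroup $A_3$ through the homomorphism sending $a,b,c$ to $x,y,z$ (which preserves both the quasikannappan bound and the power-scaling conditions), and then invoke Lemma~\ref{A_3-1} and its $PK_2$ companion to force the Kannappan defect at $(x,y,z)$ to vanish. The only cosmetic difference is that the paper phrases this as a proof by contradiction (assuming a triple with nonzero defect) while you argue directly; the substance is identical.
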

\begin{proof}
Let show that $PK_4(A)= K_4(A)$ and  $PK_2(A)= K_2(A)$.
\medskip
Suppose that $PK_4(A)\ne K_4(A)$. In this case there are $f\in
PK_4(A)$ and $x,y,z\in A$ such that
\begin{eqnarray*}
|\; f(xyz)+f(x)+f(y)+f(z)-f(xy)-f(xz)-f(yz)\;|=d>0.
\end{eqnarray*}
Denote by $B$ the subsemigroup of $A$ generated by three elements
$x,y,z$. Let $\tau $ be an epimorphism of $A_3$ onto $B$ given by
the rule $\tau(a)=x,\;\tau(b)=y,\; \tau(c)=z$. So, if we consider
function $g(t)=f(\tau(t))$ we get an element of $PK_4(A_3)$ such
that
\begin{eqnarray*}
|\; g(abc)+g(a)+g(b)+g(c)-g(ab)-g(ac)-g(bc)\;|=d>0
\end{eqnarray*}
which contradicts Lemma~\ref{A_3-1}.

\medskip
Similarly, we come to a contradiction if we suppose that
$PK_2(A)\ne K_2(A)$. Hence $PK_4(A)= K_4(A)$, and
$PK_2(A)=K_2(A)$.
\end{proof}

\begin{corollary}
\label{jung-th=} Suppose $A$ is an abelian group. Then 
$$K(A)=PK(A)=K_4(A)\oplus K_2(A).$$
\end{corollary}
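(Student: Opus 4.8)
The plan is to read this off from the two structural results already in hand, so that no new analytic estimate is needed. First I would recall Proposition~\ref{proposition3.2}, which asserts $K(A)=K_4(A)\oplus K_2(A)$ for every semigroup, and in particular for the abelian group $A$; this already pins down the right-hand equality in the claim and identifies $K(A)$ with $K_4(A)\oplus K_2(A)$.

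Next I would use Theorem~\ref{abelian}. Its proof establishes the two equalities $PK_4(A)=K_4(A)$ and $PK_2(A)=K_2(A)$. Since $PK(A)$ is by definition $PK_4(A)\oplus PK_2(A)$ (the directness of this sum being guaranteed by the decomposition $KK(S,X)=PK_4\oplus PK_2\oplus B$ of Theorem~\ref{theorem2.10}), substituting the two equalities gives $PK(A)=K_4(A)\oplus K_2(A)$ as well.

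Combining the two chains yields $K(A)=K_4(A)\oplus K_2(A)=PK(A)$, which is exactly the assertion. Put differently, the inclusion $K(A)\subseteq PK(A)$ holds on any semigroup because $K_4\subseteq PK_4$ and $K_2\subseteq PK_2$ (an exact solution is an approximate one with constant zero), while the reverse inclusion $PK(A)\subseteq K(A)$ is precisely the content of Theorem~\ref{abelian}. There is no real obstacle remaining: all the genuine work—reducing an arbitrary triple $x,y,z$ to the free abelian semigroup $A_3$ of rank three and killing the off-diagonal and higher-order terms by the limiting arguments in the two lemmas preceding Theorem~\ref{abelian}—has already been carried out, so the corollary is essentially a bookkeeping step, which via Proposition~\ref{proposition3.3} also records that the equation is stable on $A$.
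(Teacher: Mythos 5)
Your proposal is correct and matches the paper's (implicit) argument exactly: the corollary is obtained by combining Proposition~\ref{proposition3.2}, which gives $K(A)=K_4(A)\oplus K_2(A)$ for any semigroup, with Theorem~\ref{abelian}, whose proof establishes $PK_4(A)=K_4(A)$ and $PK_2(A)=K_2(A)$ and hence $PK(A)=K_4(A)\oplus K_2(A)$. The paper treats this as immediate bookkeeping after Theorem~\ref{abelian}, which is precisely how you have presented it.
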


Now from Proposition~\ref{proposition3.3} we get the following
corollary.

\begin{corollary}
The equation~$(\ref{kan})$ is stable on any
abelian semigroup $A$.
\end{corollary}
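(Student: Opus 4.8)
The plan is to reduce the stability assertion, via Proposition~\ref{proposition3.3}, to the two subspace equalities $PK_4(A)=K_4(A)$ and $PK_2(A)=K_2(A)$, and then to transfer those equalities from the free abelian semigroup $A_3$ of rank three---where they have already been verified---to an arbitrary abelian semigroup $A$ by a pullback along a generating epimorphism.

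First I would apply Theorem~\ref{rangeindependent} to reduce to the scalar case $X=\mathbb{R}$, so that it suffices to show equation~(\ref{kan}) is stable for the pair $(A,\mathbb{R})$. By Proposition~\ref{proposition3.3} this stability is equivalent to the conjunction $PK_4(A)=K_4(A)$ and $PK_2(A)=K_2(A)$, so the entire task is to establish these two equalities.

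To prove $PK_4(A)=K_4(A)$ I would argue by contradiction, following the proof of Theorem~\ref{abelian}. Suppose some $f\in PK_4(A)\setminus K_4(A)$ exists; then there are $x,y,z\in A$ at which the Kannappan defect $f(xyz)+f(x)+f(y)+f(z)-f(xy)-f(xz)-f(yz)$ is nonzero. Writing $B$ for the subsemigroup of $A$ generated by $x,y,z$, the universal property of $A_3$ produces an epimorphism $\tau:A_3\to B$ with $\tau(a)=x$, $\tau(b)=y$, $\tau(c)=z$. A direct check shows that $g=f\circ\tau$ again lies in $PK_4(A_3)$---its defect at $(u,v,w)$ equals the defect of $f$ at $(\tau u,\tau v,\tau w)$, and $g(u^k)=f(\tau(u)^k)=k^2 g(u)$---while its defect at $(a,b,c)$ equals the nonzero defect of $f$ at $(x,y,z)$. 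This contradicts Lemma~\ref{A_3-1}, which asserts $PK_4(A_3)=K_4(A_3)$. The identical pullback, using instead the lemma that $PK_2(A_3)=K_2(A_3)$, yields $PK_2(A)=K_2(A)$.

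The one point deserving attention is that this argument never invokes any group structure: it uses only that $A_3$ is free abelian on three generators and hence maps onto every three-generated abelian subsemigroup. Consequently the proof of Theorem~\ref{abelian} applies verbatim with \emph{group} replaced by \emph{semigroup}, and I expect no substantive obstacle beyond recording this observation. Once both equalities are secured, Proposition~\ref{proposition3.3} delivers stability for $(A,\mathbb{R})$, and Theorem~\ref{rangeindependent} promotes it to stability for $(A,X)$ with $X$ any real Banach space.
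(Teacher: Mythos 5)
Your proposal is correct and takes essentially the same route as the paper: after reducing to $X=\mathbb{R}$ via Theorem~\ref{rangeindependent}, you invoke Proposition~\ref{proposition3.3} to turn stability into the equalities $PK_4(A)=K_4(A)$ and $PK_2(A)=K_2(A)$, and you obtain these by pulling back a nonzero Kannappan defect along the epimorphism $A_3\to B$ onto the subsemigroup generated by the three witnessing elements, contradicting Lemma~\ref{A_3-1} and its $PK_2$ counterpart, exactly as in the proof of Theorem~\ref{abelian}. Your explicit observation that this pullback argument uses no group structure, so Theorem~\ref{abelian} holds verbatim for abelian semigroups, is precisely the tacit step the paper relies on in passing from its group-stated theorem to the semigroup-stated corollary.
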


For any group $G$, let $Q(G)$ be the set of solutions of the
quadratic functional equation
\begin{eqnarray*}
f(xy)+f(xy^{-1})=2f(x)+2f(y).
\end{eqnarray*}
Moreover, we denote by $PK^+(G)$ and by  $PK^-(G)$  subspaces of  $PK(G)$
consisting of functions $f$ such that $f(x^{-1})=f(x)$ and
$f(x^{-1})=-f(x)$, respectively.

\begin{lemma} 
For any group $G$ the following relations
$$PK_4(G)=PK^+(G),\;\; PK_2(G)=PK^-(G)$$ hold.
\end{lemma}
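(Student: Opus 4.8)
The plan is to prove the two equalities by establishing both inclusions in each: first the forward inclusions $PK_4(G)\subseteq PK^+(G)$ and $PK_2(G)\subseteq PK^-(G)$, obtained from the quasikannappan inequality together with the homogeneity relations defining $PK_4$ and $PK_2$, and then the reverse inclusions, which follow cleanly from the direct sum decomposition $PK(G)=PK_4(G)\oplus PK_2(G)$.

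First I would record that every $f\in PK_4(G)$ and every $f\in PK_2(G)$ vanishes at the identity $1$: taking $k=2$ in $f(1^k)=k^2 f(1)$ (respectively $f(1^k)=k f(1)$) gives $f(1)=4f(1)$ (respectively $f(1)=2f(1)$), hence $f(1)=0$. The central computation is then to feed the substitution $x=a$, $y=a^{-1}$, $z=a$ into the defining inequality $\|f(xyz)+f(x)+f(y)+f(z)-f(xy)-f(xz)-f(yz)\|\le c$. Since $xyz=a$, $xy=yz=1$ and $xz=a^2$, the left-hand side collapses to $\|\,3f(a)+f(a^{-1})-2f(1)-f(a^2)\,\|$. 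Using $f(1)=0$ together with the relevant homogeneity relation, this reduces to $\|f(a^{-1})-f(a)\|\le c$ in the $PK_4$ case (where $f(a^2)=4f(a)$) and to $\|f(a^{-1})+f(a)\|\le c$ in the $PK_2$ case (where $f(a^2)=2f(a)$).

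To upgrade these bounded estimates to exact identities I would replace $a$ by $a^k$ and apply the homogeneity once more, noting $a^{-k}=(a^{-1})^k$. In the $PK_4$ case the relation $f(a^{\pm k})=k^2 f(a^{\pm 1})$ turns the inequality into $k^2\|f(a^{-1})-f(a)\|\le c$, and letting $k\to\infty$ forces $f(a^{-1})=f(a)$; in the $PK_2$ case $f(a^{\pm k})=k\,f(a^{\pm 1})$ gives $k\|f(a^{-1})+f(a)\|\le c$, so $f(a^{-1})=-f(a)$. This establishes $PK_4(G)\subseteq PK^+(G)$ and $PK_2(G)\subseteq PK^-(G)$. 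For the reverse inclusions I would use that $PK^+(G)$ and $PK^-(G)$ are by definition subspaces of $PK(G)=PK_4(G)\oplus PK_2(G)$. Given $f\in PK^+(G)$, write $f=f_4+f_2$ with $f_4\in PK_4(G)$ and $f_2\in PK_2(G)$; by the forward inclusions $f_4(x^{-1})=f_4(x)$ and $f_2(x^{-1})=-f_2(x)$, so the hypothesis $f(x^{-1})=f(x)$ reads $f_4(x)-f_2(x)=f_4(x)+f_2(x)$, whence $f_2\equiv 0$ and $f=f_4\in PK_4(G)$. The same argument applied to $f\in PK^-(G)$ yields $f_4\equiv 0$ and $f\in PK_2(G)$, completing both equalities.

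The only genuinely delicate point is the amplification step: the raw substitution yields merely a bounded difference, and it is the homogeneity scaling $a\mapsto a^k$ that converts this into the exact parity identity by driving the right-hand side to zero. Everything else is bookkeeping from the substitution and a direct consequence of the decomposition $PK(G)=PK_4(G)\oplus PK_2(G)$; I would take care that the homogeneity relations are applied to the inverse powers $a^{-k}=(a^{-1})^k$ correctly, since that is precisely where the sign distinguishing the $PK_4$ case from the $PK_2$ case enters.
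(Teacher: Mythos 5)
Your proposal is correct and follows essentially the same route as the paper: the reverse inclusions are obtained exactly as in the paper's proof, by writing $f\in PK^{+}(G)$ (resp.\ $PK^{-}(G)$) as $f=f_4+f_2$ with $f_4\in PK_4(G)$, $f_2\in PK_2(G)$ and comparing parities to kill one summand. The only difference is that you supply a detailed argument (the substitution $x=a$, $y=a^{-1}$, $z=a$ followed by the amplification $a\mapsto a^{k}$) for the forward inclusions $PK_4(G)\subseteq PK^{+}(G)$ and $PK_2(G)\subseteq PK^{-}(G)$, which the paper simply declares to be clear; your argument for that step is valid and fills in the omitted detail.
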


\begin{proof} 
It is clear that $PK_4(G)\subseteq PK^+(G)$, and $PK_2(G)\subseteq
PK^-(G)$. Let us show that $PK^+(G)\subseteq PK_4(G)$, and
$PK^-(G)\subseteq PK_2(G)$, respectively.
Let $f\in PK(G)$, then there are
$\varphi\in PK_4(G)$ and $\psi\in PK_2(G)$ such that
$f(x)=\varphi(x)+\psi(x)$. If $f\in PK^+(G)$, then
\begin{eqnarray*}
f(x)=f(x^{-1})=\varphi(x^{-1})+\psi(x^{-1})=\varphi(x)-\psi(x),
\end{eqnarray*}
so $\varphi(x)+\psi(x)=\varphi(x)-\psi(x)$ and we see that
$\psi(x)\equiv 0$ and $f(x)=\varphi(x)\in PK_4(G)$. Therefore
$PK^+\subseteq PK_4(G)$.
\medskip

Now if  $f\in PK^-(G)$, then
\begin{eqnarray*}
f(x)=-f(x^{-1})=-\varphi(x^{-1})-\psi(x^{-1})=-\varphi(x)+\psi(x),
\end{eqnarray*}
so $\varphi(x)+\psi(x)=-\varphi(x)+\psi(x)$ and we see that
$\varphi\equiv 0$ and $f(x)=\psi(x)\in PK_2(G)$. Therefore
$PK^-(G)\subseteq PK_2(G)$.
\end{proof}

\begin{lemma}
\label{quadratic} Let $G$ be an arbitrary group and $f\in Q(G)$,
then for any $x,y,z\in G$ we have
\begin{equation}
\label{quadr} 
f(xyz)+f(xzy)=f(x)+3f(y)+3f(z)+f(xy) +f(xz)-f(yz).
\end{equation}
\end{lemma}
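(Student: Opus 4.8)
The plan is to exploit only the defining parallelogram law of $Q(G)$, namely $f(ab)+f(ab^{-1})=2f(a)+2f(b)$, applied three or four times with carefully chosen groupings; no structural hypothesis on $G$ beyond this is needed. As preliminary bookkeeping I would first record the standard consequences $f(1)=0$ (put $a=b=1$) and $f(x^{-1})=f(x)$ (put $a=1$), which let me move freely between a word and its inverse.

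First I would split each of the two target words as a product of a two-letter block and a single letter. Applying the law to the pair $(xy,z)$ gives $f(xyz)=2f(xy)+2f(z)-f(xyz^{-1})$, and applying it to the pair $(xz,y)$ gives $f(xzy)=2f(xz)+2f(y)-f(xzy^{-1})$. Adding these yields
\[
f(xyz)+f(xzy)=2f(xy)+2f(xz)+2f(y)+2f(z)-\bigl[\,f(xyz^{-1})+f(xzy^{-1})\,\bigr].
\]
It then remains to eliminate the two leftover cross terms $f(xyz^{-1})$ and $f(xzy^{-1})$ in favour of the admissible quantities.

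The crucial observation, and the step I expect to be the main obstacle, is that these two cross terms are produced \emph{together} by a single further application of the law, to the pair $(x,\,yz^{-1})$. Indeed, since $(yz^{-1})^{-1}=zy^{-1}$, inversion reverses the block $yz^{-1}$ into $zy^{-1}$, so the law gives precisely $f(xyz^{-1})+f(xzy^{-1})=2f(x)+2f(yz^{-1})$. Recognising this grouping is the one non-routine idea; everything before and after is mechanical. A final application to the pair $(y,z)$ rewrites $f(yz^{-1})=2f(y)+2f(z)-f(yz)$, and substituting both relations into the displayed identity and collecting terms produces the right-hand side of \eqref{quadr}. I would confirm the final coefficients by specialising to an abelian group, where $f(x)=B(x,x)$ for a symmetric biadditive $B$, so that both sides expand in the values of $B$ and can be matched directly.
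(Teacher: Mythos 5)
Your route is exactly the paper's: the same three applications of the parallelogram law, to the pairs $(xy,z)$, $(x,yz^{-1})$ and $(xz,y)$, followed by the rewriting $f(yz^{-1})=2f(y)+2f(z)-f(yz)$. The gap is your final sentence. The three relations you display are all correct, but collecting terms from them does \emph{not} give the right-hand side of \eqref{quadr}; it gives
\begin{align*}
f(xyz)+f(xzy) &= 2f(xy)+2f(xz)+2f(y)+2f(z)-2f(x)-2f(yz^{-1})\\
&= 2\bigl[f(xy)+f(xz)+f(yz)\bigr]-2\bigl[f(x)+f(y)+f(z)\bigr].
\end{align*}
The abelian check you proposed but evidently did not carry out would have flagged this at once: on $G=\mathbb{Z}$ with $f(n)=n^2$ and $x=y=z=1$, the left side of \eqref{quadr} equals $18$, its right side equals $11$, while the display above gives $18$. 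So ``substituting both relations and collecting terms produces the right-hand side of \eqref{quadr}'' is not a routine computation that was merely skipped; it is false.

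The reason you cannot close this gap is that Lemma \ref{quadratic} as printed is itself false, and the paper's own proof slips at precisely the spot where your write-up goes vague: after listing the same three equations, it asserts ``Therefore $f(xyz)+f(xzy)=f(xy)+f(z)+f(x)+f(yz^{-1})+f(xz)+f(y)$,'' which halves every coefficient and reverses the signs of $f(x)$ and $f(yz^{-1})$ (the correct combination is the first equation plus the third minus the second, with the factors of $2$ intact). Feeding $f(yz^{-1})=2f(y)+2f(z)-f(yz)$ into that erroneous line is what yields \eqref{quadr}. Your algebra, carried through honestly, proves the corrected identity displayed above --- equivalently, since every $f\in Q(G)$ satisfies $f(uv)=f(vu)$, that the expression $f(xyz)+f(x)+f(y)+f(z)-f(xy)-f(xz)-f(yz)$ changes sign when $y$ and $z$ are interchanged --- and in doing so it refutes the statement you were asked to prove rather than establishing it. Note that the error propagates: identity \eqref{quadr1} in the proof of Lemma \ref{qA_3-1}, which is derived from \eqref{quadr}, must be replaced by the Kannappan equation itself (this is what the corrected identity reduces to on abelian groups), after which that argument can be repaired.
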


\begin{proof}
We have
\begin{align*}
f(xyz)+f(xyz^{-1}) &= 2\, f(xy)+2 \, f(z), \\
f(xyz^{-1})+f(xzy^{-1}) &= 2\, f(x) + 2\, f(yz^{-1}), \\
f(xzy^{-1})+f(xzy) &= 2\, f(xz) + 2\, f(y).
\end{align*}
Therefore
\begin{align*}
f(xyz)+f(xzy) = f(xy)+f(z) +f(x)+f(yz^{-1}) +f(xz)+f(y).
\end{align*}
Now taking into account relations
\begin{eqnarray*}
f(yz)+f(yz^{-1})=2f(y)+2f(z),
\\
f(yz^{-1})=2f(y)+2f(z)-f(yz)
\end{eqnarray*}
we get
\begin{align*}
f(xyz)+f(xzy) &= f(xy)+f(z) +f(x)+2f(y)+2f(z)-f(yz) +f(xz)+f(y) \\
& = f(xy) +f(x)+3f(y)+3f(z)-f(yz) +f(xz).
\end{align*}
This completes the proof of the lemma.
\end{proof}

\begin{lemma} \label{qA_3-1}
Let $\overline A_3$ be an abelian free group of rank three. Then
$Q(\overline A_3)=BM(\overline A_3)$.
\end{lemma}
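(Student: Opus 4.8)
The plan is to prove the two inclusions $BM(\overline{A}_3) \subseteq Q(\overline{A}_3)$ and $Q(\overline{A}_3) \subseteq BM(\overline{A}_3)$ separately. The first inclusion is routine: if $f(x) = B(x,x)$ for a symmetric bimorphism $B$, then expanding $B(xy,xy)$ and $B(xy^{-1},xy^{-1})$ by biadditivity and using $B(x,y^{-1}) = -B(x,y)$ together with $B(y^{-1},y^{-1}) = B(y,y)$ gives $f(xy) + f(xy^{-1}) = 2B(x,x) + 2B(y,y) = 2f(x) + 2f(y)$, so $f \in Q(\overline{A}_3)$. This direction uses nothing about the rank.

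For the reverse inclusion I would first record the standard elementary consequences of the quadratic equation: setting $x=y=1$ gives $f(1)=0$; setting $x=1$ gives $f(y^{-1}) = f(y)$, so $f$ is even; and an easy induction (using evenness to pass to negative exponents) gives $f(x^n) = n^2 f(x)$ for all $n \in \Bbb{Z}$. The decisive step is to show that every $f \in Q(\overline{A}_3)$ satisfies the Kannappan equation \eqref{kan}. For this I would invoke Lemma~\ref{quadratic}: since $\overline{A}_3$ is abelian we have $f(xyz) = f(xzy)$, so the left-hand side of \eqref{quadr} equals $2f(xyz)$, and after simplifying the right-hand side the symmetric identity $f(xyz) + f(x) + f(y) + f(z) = f(xy) + f(xz) + f(yz)$ falls out. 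Thus $f \in K(\overline{A}_3)$, and since $f(x^n) = n^2 f(x)$ we in fact have $f \in K_4(\overline{A}_3)$.

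With the Kannappan identity in hand I would define the candidate bimorphism by the symmetric cocycle formula
$$ B(u,v) = \tfrac12\bigl[\, f(uv) - f(u) - f(v)\, \bigr]. $$
Symmetry $B(u,v) = B(v,u)$ is immediate from commutativity, and a direct computation shows that the additivity $B(uw,v) = B(u,v) + B(w,v)$ is equivalent, after clearing the factor $\tfrac12$, precisely to the Kannappan identity applied to $(u,w,v)$, which we have just established; hence $B$ is a symmetric bimorphism. Finally, using $f(x^2) = 4f(x)$ we obtain $B(x,x) = \tfrac12[\, f(x^2) - 2f(x)\, ] = f(x)$, so $f(x) = B(x,x) \in BM(\overline{A}_3)$, completing the reverse inclusion.

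The step I expect to be the main obstacle is extracting the Kannappan identity from the quadratic equation: Lemma~\ref{quadratic} is stated asymmetrically in its three arguments, so one must use commutativity carefully (or, if a more symmetric route is preferred, apply the relation three times with each of $x,y,z$ played against the other two and add the resulting instances) in order to cancel the off-diagonal terms and isolate the symmetric form. Once this identity is secured, the verification that the cocycle $B$ is biadditive and reproduces $f$ on the diagonal is purely formal. I note that this argument never actually uses that the rank is three; it establishes $Q(A) = BM(A)$ for every abelian group $A$, with Lemma~\ref{A_3-1} available as an alternative route to the conclusion once one knows that $f \in K_4$.
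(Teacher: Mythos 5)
Your route is genuinely different from the paper's, and in outline it is cleaner and more general. The paper interpolates a symmetric bimorphism $B$ through the six values $f(a)$, $f(b)$, $f(c)$, $f(ab)$, $f(ac)$, $f(bc)$, sets $\varphi=f-B(\cdot,\cdot)$, and then uses Lemma~\ref{quadratic} together with the rank-three structure to force $\varphi(a^nb^mc^k)=0$ for all exponents; you instead build $B$ globally by polarization, $B(u,v)=\frac{1}{2}[f(uv)-f(u)-f(v)]$, observe that biadditivity of $B$ is literally equivalent to the Kannappan identity, and recover $f(x)=B(x,x)$ from $f(x^2)=4f(x)$. As you note, this never uses the rank and would give $Q(A)=BM(A)$ for every abelian group $A$, which is exactly what Proposition~\ref{prop3.17} needs later anyway.

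However, the pivotal step --- the one you yourself flagged as the main obstacle --- has a genuine gap as written: the identity \eqref{quadr} is false as printed, so no manipulation of it can produce the Kannappan identity. Counterexample: $G=\Bbb{Z}$, $f(n)=n^2$, $x=y=z=1$ gives left side $18$ and right side $1+3+3+4+4-4=11$. Worse, if \eqref{quadr} did hold on an abelian group, subtracting from it the instance with $x$ and $y$ interchanged would give $f(x)-f(y)=f(xz)-f(yz)$ for all $x,y,z$, hence (taking $y=1$) additivity of $f$, hence $f\equiv 0$; and your proposed fallback of summing the three instances in which each variable is played against the other two yields $6f(xyz)=7[f(x)+f(y)+f(z)]+[f(xy)+f(xz)+f(yz)]$, which is again not the Kannappan identity. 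So the symmetric identity does not ``fall out'' by simplifying the stated right-hand side. The repair is to redo the elimination in the proof of Lemma~\ref{quadratic} with correct bookkeeping: from $f(xyz)+f(xyz^{-1})=2f(xy)+2f(z)$, $f(xyz^{-1})+f(xzy^{-1})=2f(x)+2f(yz^{-1})$, $f(xzy^{-1})+f(xzy)=2f(xz)+2f(y)$, and $f(yz^{-1})=2f(y)+2f(z)-f(yz)$, the correct conclusion is
\begin{equation*}
f(xyz)+f(xzy)=2\left[\,f(xy)+f(xz)+f(yz)-f(x)-f(y)-f(z)\,\right],
\end{equation*}
which on an abelian group is exactly twice the Kannappan identity. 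With this corrected form of Lemma~\ref{quadratic}, your argument is complete: evenness, $f(x^n)=n^2f(x)$, the equivalence of biadditivity of the polarization with the Kannappan identity, and $B(x,x)=f(x)$ all check out. You should also know this flaw is inherited rather than invented: the paper's own proof of the present lemma rests on \eqref{quadr1}, i.e.\ on the same misstated identity, so it needs the identical correction (after which its ``set $k=0$'' step becomes vacuous and must be replaced by additivity in the exponents, e.g.\ $\varphi(a^nb^{m+k})=\varphi(a^nb^m)+\varphi(a^nb^k)$, giving $\varphi(a^nb^m)=nm\,\varphi(ab)=0$).
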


\begin{proof}
Let $\overline A_3$ be a free abelian group of rank three with
free generators $a,b,c$. It is clear that $BM(\overline A_3 )\subseteq Q(\overline A_3 )$.
The space of symmetric bimorphisms on $\overline A_3$ is six dimensional. For
$f \in Q( \overline A_3 )$, we choose a symmetric bimorphism $B(x,y)$ such
that $B(a,a)=f(a)$, $B(b,b)=f(b)$, $B(c,c)=f(c)$,
$B(a,b)=\frac{1}{2}[f(ab)-f(a)-f(b)]$,
$B(a,c)=\frac{1}{2}[f(ac)-f(a)-f(c)]$,
$B(b,c)=\frac{1}{2}[f(bc)-f(b)-f(c)]$.
\medskip

Hence, the function $\varphi(x)=f(x)-B(x,x)$ belongs to $Q( \overline A_3 )$,
and
\begin{eqnarray}
\varphi(a)=\varphi(b)=\varphi(c)=\varphi(ab)=\varphi(ac)=\varphi(bc)=0.
\end{eqnarray}
We have $\varphi(a^k)=\varphi(b^k)=\varphi(c^k)=0$ for any $k\in \Bbb{Z}$.

\medskip
Now from~\eqref{quadr} we get
\begin{equation}
\label{quadr1}
2\varphi(xyz)=\varphi(x)+3\varphi(y)+3\varphi(z)+\varphi(xy)
+\varphi(xz)-\varphi(yz).
\end{equation}
From this equality we get
\begin{align*}
2\varphi(a^nb^mb^k) &=\varphi(a^n)+3\varphi(b^m)+3\varphi(b^k)+\varphi(a^nb^m)
+\varphi(a^nb^k)-\varphi(b^mb^k), \\
2\varphi(a^nb^mb^k) &=\varphi(a^nb^m) +\varphi(a^nb^k).
\end{align*}
So, if $k=0$, then $2\varphi(a^nb^m)=\varphi(a^nb^m)$ and we see
that $\varphi(a^nb^m)=0$ for any $n,m\in \Bbb{Z}$. Similarly we verify
that $\varphi(a^nc^m)=0,\,\varphi(b^nc^m)=0$. Now
from~\eqref{quadr1} we get
\begin{align*}
\label{quadr13}
2\varphi(a^nb^mc^k) &=\varphi(a^n)+3\varphi(b^m)+3\varphi(c^k)+\varphi(a^nb^m)
+\varphi(a^nc^k)-\varphi(b^mc^k) \\
&= 0.
\end{align*}
It means that
$$f(x)=B(x,x)\in BM( \overline A_3 )$$
and the proof of the lemma is now finished.
\end{proof}

\begin{lemma} 
Let $\overline A_3$ be an abelian group of rank three,
then $K_2(\overline A_3)=X(\overline A_3)$.
\end{lemma}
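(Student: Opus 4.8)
The plan is to prove the nontrivial inclusion $K_2(\overline A_3) \subseteq X(\overline A_3)$, the reverse inclusion being immediate: every additive character $\psi$ satisfies $\psi(x^k)=k\,\psi(x)$ and, by a one-line substitution into \eqref{kan}, solves the Kannappan equation, so $X(\overline A_3)\subseteq K_2(\overline A_3)$. I would therefore start from an arbitrary $f\in K_2(\overline A_3)$, let $\psi$ be the additive character determined by $\psi(a)=f(a)$, $\psi(b)=f(b)$, $\psi(c)=f(c)$, and set $\varphi=f-\psi$. Since $K_2(\overline A_3)$ is a linear space containing $\psi$, we have $\varphi\in K_2(\overline A_3)$ with $\varphi(a)=\varphi(b)=\varphi(c)=0$, and the goal reduces to showing $\varphi\equiv 0$. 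Because $K_2\subseteq PK_2$ I may use $\varphi(x^k)=k\,\varphi(x)$ for all $k\in\mathbb{N}$, and the earlier lemma identifying $PK_2(G)$ with $PK^-(G)$ supplies the odd symmetry $\varphi(x^{-1})=-\varphi(x)$; hence $\varphi(x^k)=k\,\varphi(x)$ for every $k\in\mathbb{Z}$ and $\varphi$ vanishes on every integer power of $a$, $b$, $c$.

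The heart of the argument concerns the two-generator monomials. Since $f$, and therefore $\varphi$, solves \eqref{kan} \emph{exactly}, I would invoke Lemma \ref{lemma2.2} with $c=0$, so the expanded identity holds with zero right-hand side. Feeding in $x_1=a^N$ and $x_2=\cdots=x_{\ell+1}=b^{q}$, and using that $\varphi$ vanishes on $a^N$, $b^q$, $b^{2q}$, yields the exact scaling $\varphi(a^N b^{\ell q})=\ell\,\varphi(a^N b^q)$, and symmetrically $\varphi(a^{\ell p} b^{q})=\ell\,\varphi(a^p b^q)$, valid for all integers $N,p,q$ because the needed vanishing persists for negative exponents by the odd symmetry. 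Composing the two relations gives $\varphi\big((a^n b^m)^k\big)=\varphi(a^{kn}b^{km})=k^2\,\varphi(a^n b^m)$, whereas membership in $PK_2$ forces $\varphi\big((a^n b^m)^k\big)=k\,\varphi(a^n b^m)$; comparing the two at $k=2$ gives $\varphi(a^n b^m)=0$ for all $n,m\in\mathbb{Z}$. The identical computation yields $\varphi(a^n c^m)=\varphi(b^n c^m)=0$.

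For a general element I would then apply \eqref{kan} directly with $x=a^n$, $y=b^m$, $z=c^{\ell}$: every term other than $\varphi(a^n b^m c^{\ell})$ is a value of $\varphi$ on a one- or two-generator monomial and hence vanishes, so $\varphi(a^n b^m c^{\ell})=0$. As every element of $\overline A_3$ has the form $a^n b^m c^{\ell}$ with $n,m,\ell\in\mathbb{Z}$, we conclude $\varphi\equiv 0$, i.e. $f=\psi\in X(\overline A_3)$, which completes the inclusion $K_2(\overline A_3)\subseteq X(\overline A_3)$.

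I expect the main obstacle to be the bookkeeping required to carry the scaling relations through for \emph{negative} exponents: Lemma \ref{lemma2.2} is most naturally applied with positive powers, and one must check that the vanishing of $\varphi$ on $a^N$, $b^q$, $b^{2q}$ (and their companions) genuinely survives when exponents are negative. This is precisely where the odd symmetry $\varphi(x^{-1})=-\varphi(x)$, and hence the identification $PK_2(\overline A_3)=PK^-(\overline A_3)$, is indispensable. Once that uniformity over all sign patterns is secured, the comparison of the $k^2$-scaling forced by the two-generator relations against the $k$-scaling coming from $PK_2$ closes the argument exactly as in the free abelian semigroup case.
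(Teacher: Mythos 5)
Your proposal is correct and is essentially the paper's own argument: the paper proves this lemma simply by declaring it ``similar to the proof of Lemma~\ref{qA_3-1}'', whose template---subtract the exact solution matched on the generators, kill the two-generator monomials, then finish with the three-variable substitution in \eqref{kan}---is exactly your skeleton. Your mechanism for the middle step (exact scaling relations from Lemma~\ref{lemma2.2} with $c=0$, odd symmetry via $PK_2=PK^-$ to cover negative exponents, and the $k$-versus-$k^{2}$ comparison forced by membership in $PK_2$) is the same device the paper itself uses in its free abelian semigroup lemma, so nothing here constitutes a genuinely different route.
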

\begin{proof} The proof is similar to the proof of
Lemma~\ref{qA_3-1}.
\end{proof}

\begin{propos}\label{prop3.17}
Let $A$ be an abelian group, then
\begin{eqnarray*}
PK(A)=PK_4(A)\oplus PK_2(A)=K_4(A)\oplus K_2(A)=Q(A)\oplus X(A).
\end{eqnarray*}
Another words general solution $f: A\to R$ of equation~\eqref{kan}
is of the form
$$
f(x)=B(x,x)+\psi(x),
$$
where $B(x,y)\in BM(A),\; \psi\in X(A)$.
\end{propos}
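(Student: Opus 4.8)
The plan is to establish the chain of equalities in Proposition~\ref{prop3.17} by assembling the pieces already proved and then extracting the explicit form of the general solution. The key observation is that all the hard analytic work---the decomposition and the identification of the two summands on free generators---is already in place, so the proof reduces to a reduction argument followed by bookkeeping.

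First I would prove the leftmost equality $PK(A)=PK_4(A)\oplus PK_2(A)$. This is exactly the restriction of Theorem~\ref{theorem2.10} to solutions of the quasikannappan inequality with $X=\reals$; indeed, by definition $PK(A)=PK_4(A)\oplus PK_2(A)$ once one notes that the bounded part $B(A)$ plays no role inside $PK(A)$, since $PK_4$ and $PK_2$ are precisely the pieces satisfying $f(x^k)=k^2f(x)$ and $f(x^k)=kf(x)$ respectively. Next I would prove the middle equality $PK_4(A)\oplus PK_2(A)=K_4(A)\oplus K_2(A)$, which is the content of Theorem~\ref{abelian}: it suffices to invoke $PK_4(A)=K_4(A)$ and $PK_2(A)=K_2(A)$, established there by pulling back an arbitrary three-element configuration to the free semigroup $A_3$ and applying Lemma~\ref{A_3-1} and its $K_2$-analogue.

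For the rightmost equality $K_4(A)\oplus K_2(A)=Q(A)\oplus X(A)$ I would argue summand by summand. The inclusion $BM(A)\subseteq K_4(A)$ is routine (any $x\mapsto B(x,x)$ with $B$ a symmetric bimorphism satisfies \eqref{kan}), and conversely I would show $K_4(A)=BM(A)=Q(A)$ on the abelian group $A$ by the same free-group reduction: pull an element of $K_4(A)$ back to the free abelian group $\overline A_3$ of rank three, where Lemma~\ref{qA_3-1} gives $Q(\overline A_3)=BM(\overline A_3)$, and use that on abelian groups $K_4$ and $Q$ coincide because the kannappan equation \eqref{kan} is equivalent to the quadratic equation together with evenness. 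For the odd part, $K_2(A)=X(A)$ follows from the last lemma, $K_2(\overline A_3)=X(\overline A_3)$, via the same pullback. Finally I would read off the explicit form: any $f\in PK(A)$ decomposes as $f=\varphi+\psi$ with $\varphi\in K_4(A)=BM(A)$ and $\psi\in K_2(A)=X(A)$, so $f(x)=B(x,x)+\psi(x)$ with $B\in BM(A)$ and $\psi\in X(A)$.

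The step I expect to require the most care is verifying that the three claimed direct-sum decompositions are mutually compatible---that is, that the identifications $K_4(A)=Q(A)$ (the even quadratic part) and $K_2(A)=X(A)$ (the odd additive part) are the \emph{same} splitting as the abstract $PK_4\oplus PK_2$ decomposition coming from Theorem~\ref{theorem2.10}. Concretely, one must check that a symmetric bimorphism gives an element killed by the $PK_2$-projection and that an additive character gives an element killed by the $PK_4$-projection, so that the explicit form $f(x)=B(x,x)+\psi(x)$ is genuinely the decomposition and not merely an existence statement. This matching is where the parity condition $f(x^{-1})=\pm f(x)$ (available since $A$ is a group) does the real work, and it is the only place the argument uses group inverses rather than just the semigroup structure.
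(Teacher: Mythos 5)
Your skeleton matches the paper's: the first equality is essentially definitional (Theorem~\ref{theorem2.10} restricted to the unbounded components), the second is exactly Theorem~\ref{abelian}, and the third is to be obtained by identifying each summand via a pullback to the rank-three free abelian group $\overline A_3$. The odd part of your argument is complete as sketched: pulling an element of $K_2(A)$ back through an epimorphism $\pi:\overline A_3\to\langle x,y,z\rangle$ and applying $K_2(\overline A_3)=X(\overline A_3)$ gives $K_2(A)\subseteq X(A)$, and $X(A)\subseteq K_2(A)$ is routine.

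The gap is in the even part, and it is precisely the inclusion $Q(A)\subseteq K_4(A)$. Your pullback is aimed at elements of $K_4(A)$; combined with Lemma~\ref{qA_3-1} this can only yield $K_4(A)\subseteq BM(A)\subseteq Q(A)$ (which is useful for the explicit representation $f(x)=B(x,x)+\psi(x)$, granted one glues the local bimorphisms via $B(x,y)=\frac{1}{2}[f(xy)-f(x)-f(y)]$, but the inclusion $K_4(A)\subseteq Q(A)$ itself is in any case routine: a $K_4$ function is even by the lemma $PK_4(G)=PK^{+}(G)$, and putting $z=y^{-1}$ in \eqref{kan} then gives the quadratic equation). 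What is never established is the reverse containment, that every quadratic function on $A$ satisfies \eqref{kan}; your only justification is the remark that on abelian groups $K_4$ and $Q$ coincide "because the kannappan equation is equivalent to the quadratic equation together with evenness" --- but that equivalence \emph{is} the statement being proved, so the argument is circular at exactly the point where the paper does all of its work. The paper's proof runs the pullback in the other direction: given $f\in Q(A)$ and a triple $x,y,z$ at which \eqref{kan} might fail, restrict $f$ to $\langle x,y,z\rangle$, pull it back through $\pi:\overline A_3\to\langle x,y,z\rangle$ to an element of $Q(\overline A_3)$, apply Lemma~\ref{qA_3-1} (whose proof rests on the identity of Lemma~\ref{quadratic}) to write the pullback as the diagonal of a symmetric bimorphism, and observe that such diagonals satisfy \eqref{kan} identically --- contradicting the choice of $x,y,z$; together with $f(x^k)=k^2f(x)$ (induction on the quadratic equation) this gives $f\in K_4(A)$. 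So your proof closes once the free-group reduction is applied to $Q(A)$ rather than to $K_4(A)$. Finally, the "compatibility of splittings" you single out as the delicate step is a non-issue: once the set equalities $K_4(A)=Q(A)$ and $K_2(A)=X(A)$ are known, the direct sums $K_4(A)\oplus K_2(A)$ and $Q(A)\oplus X(A)$ are literally the same decomposition; parity is needed only for the routine inclusion $K_4(A)\subseteq Q(A)$, not for matching any projections.
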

\begin{proof}
First we verify equality $K_4(A)=Q(A)$. We have $K_4(A)\subseteq
Q(A)$. Suppose that there is $f \in Q(A) \setminus K_4(A)$, then
there are $x,y,z\in A$ such that
\begin{eqnarray}
\label{ne1} f(xyz)+f(x)+f(y)+f(z)\ne f(xy)+f(xz)+f(yz) .
\end{eqnarray}
Now let $\overline A_3$ be a free abelian group with free
generators $a,b,c$. Let $B$ be a subgroup of $A$ generated by
elements $x,y,z$ and let $\pi: \overline A_3 \to B$ be an
epimorphism such that $\pi(a)=x,\, \pi(b)=y, \, \pi(c)=z$. Then function
$\varpi(t) = f(\pi(t))$ is an element of $Q(\overline A_3)$. By
Lemma~\ref{qA_3-1} we have $\varpi\in BM(\overline A_3)$. But this
contradicts to~\eqref{ne1} because
\begin{align*}
\varpi(abc)&+\varpi(a)+\varpi(b)+\varpi(c)-\varpi(ab)-\varpi(ac)-\varpi(bc) \\
&= f(xyz)+f(x)+f(y)+f(z)- f(xy)-f(xz)-f(yz) \ne 0.
\end{align*}
Therefore, $f\in K_4(A)$. Similar way we verify that
$K_2(A)=X(A)$.
\end{proof}

As a first corollary of Proposition~\ref{prop3.17} we obtain the following
corollary that generalizes Kannappan's result \cite{ka} (see Introduction) in the  
case $\Bbb{K} = \Bbb{R}$.

\begin{corollary} 
Let $A$ be an abelian group, then general solution $f:
A\to \Bbb{R}$ of equation~\eqref{kan} is of the form
$$
f(x)=B(x,x)+\psi(x),
$$
where $B(x,y)$ is an symmetric bimorphism and $\psi\in X(A)$.
\end{corollary}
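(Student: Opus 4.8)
The plan is to read the result directly off Proposition~\ref{prop3.17}, since this corollary is precisely the concrete description of the general solution contained in that proposition, specialized to real-valued $f$. I would begin by observing that a function $f: A \to \mathbb{R}$ solves equation~\eqref{kan} exactly when $f \in K(A)$. The abelian decomposition established earlier (Theorem~\ref{abelian} together with its corollary) gives $K(A) = PK(A) = K_4(A) \oplus K_2(A)$, so every solution splits uniquely as $f = f_4 + f_2$ with $f_4 \in K_4(A)$ and $f_2 \in K_2(A)$.

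Next I would identify the two summands using Proposition~\ref{prop3.17}, whose proof establishes $K_4(A) = Q(A)$ and $K_2(A) = X(A)$, and which in turn, via Lemma~\ref{qA_3-1} and the reduction to the rank-three free abelian group by epimorphisms, identifies $Q(A)$ with $BM(A)$. Thus $f_4(x) = B(x,x)$ for a symmetric bimorphism $B$ and $f_2 = \psi$ is an additive character. Combining these gives $f(x) = B(x,x) + \psi(x)$, the asserted form. (If one wants uniqueness of $B$, it follows by polarization, since $f_4(x) = B(x,x)$ forces $B(x,y) = \tfrac{1}{2}[f_4(xy) - f_4(x) - f_4(y)]$.)

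Since all the substantive work, namely the decomposition of $K(A)$, the coincidence $Q(A) = BM(A)$, and the characterization of $K_2(A)$ as $X(A)$, has already been carried out in Proposition~\ref{prop3.17} and the lemmas preceding it, there is no genuine obstacle remaining: the corollary is a verbatim translation of that proposition into the language of symmetric bimorphisms and characters. Were one instead to prove the statement from scratch, the main difficulty would be exactly the equality $Q(A) = BM(A)$, that is, showing that every quadratic function on an arbitrary abelian group arises as the diagonal of a symmetric bimorphism; this is what forces the reduction to finitely generated (indeed rank-three) subgroups handled by Lemma~\ref{qA_3-1}.
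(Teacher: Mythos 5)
Your proposal is correct and takes essentially the same route as the paper: the paper offers no separate argument for this corollary, stating it as an immediate restatement of Proposition~\ref{prop3.17} (whose ``in other words'' clause is literally the corollary), with all the substantive work --- $K(A)=PK(A)=K_4(A)\oplus K_2(A)$, the identifications $K_4(A)=Q(A)$ and $K_2(A)=X(A)$, and $Q(\overline A_3)=BM(\overline A_3)$ from Lemma~\ref{qA_3-1} --- carried out in the preceding results, exactly as you describe. Your added polarization remark on the uniqueness of $B$ is a harmless extra not present in the paper.
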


From Proposition~\ref{prop3.17} we obtain the following
two theorems that generalize the results of Jung \cite{jung1} mentioned in the  
Introduction.

\begin{theorem}
Suppose that $A$ is an abelian group, and $X$ a real Banach space.
Let $f: A\to X$ satisfies the inequalities
\begin{eqnarray*}
\|\,f(xyz)+f(x)+f(y)+f(z)-f(xy)-f(xz)-f(yz)\,\|\le d  \\
\|\,f(x)-f(-x)\,\|\le \theta
\end{eqnarray*}
for some $d, \theta>0$ and for all $x,y,z\in A$. Then there exists
a unique quadratic mapping $q: A\to X$ which satisfies
\begin{eqnarray*}
\|\,f(x)-q(x)\,\|\le \delta
\end{eqnarray*}
for some positive $\delta$ and all $x\in A$.
\end{theorem}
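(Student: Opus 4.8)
The plan is to reduce everything to the decomposition of solutions of~(\ref{kan}) already established for abelian groups, and then to exploit the second (evenness) hypothesis to annihilate the additive summand. Writing $A$ multiplicatively, so that the hypothesis $\|f(x)-f(-x)\|\le\theta$ reads $\|f(x)-f(x^{-1})\|\le\theta$, I first note that the first inequality says exactly that $f\in KK(A,X)$. Since equation~(\ref{kan}) is stable on abelian semigroups, Theorem~\ref{rangeindependent} gives its stability for the pair $(A,X)$, so there is a solution $\varphi\in K(A,X)$ of~(\ref{kan}) with $\|f-\varphi\|\le d_0$ for some $d_0>0$. By Proposition~\ref{proposition3.2}, I may write $\varphi=q+\psi$ with $q\in K_4(A,X)$ and $\psi\in K_2(A,X)$, so that $q(x^k)=k^2q(x)$ and $\psi(x^k)=k\,\psi(x)$ for all $k\in\Bbb{N}$.

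Next I would record the parities of the two summands, which is where the quadratic structure enters. Putting $x=y=z=1$ in~(\ref{kan}) gives $\varphi(1)=0$, and substituting $z=y^{-1}$ turns~(\ref{kan}) into the Drygas relation
\begin{equation*}
\varphi(xy)+\varphi(xy^{-1})=2\varphi(x)+\varphi(y)+\varphi(y^{-1}).
\end{equation*}
Applying this to $q$ with $y=x$, and using $q(1)=0$ together with $q(x^2)=4q(x)$, yields $q(x^{-1})=q(x)$; hence $q$ is even and, feeding this back into the Drygas relation, $q$ satisfies $q(xy)+q(xy^{-1})=2q(x)+2q(y)$, i.e.\ $q$ is quadratic. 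Applying the same relation to $\psi$ with $y=x$, and using $\psi(1)=0$ and $\psi(x^2)=2\psi(x)$, yields $\psi(x^{-1})=-\psi(x)$, so $\psi$ is odd.

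The heart of the argument is then to use the evenness hypothesis to kill $\psi$. From $\|f-\varphi\|\le d_0$ I have $\|f(x)-q(x)-\psi(x)\|\le d_0$, and replacing $x$ by $x^{-1}$ and inserting the parities $q(x^{-1})=q(x)$, $\psi(x^{-1})=-\psi(x)$ gives $\|f(x^{-1})-q(x)+\psi(x)\|\le d_0$. Subtracting these two estimates and invoking $\|f(x)-f(x^{-1})\|\le\theta$ produces
\begin{equation*}
\|2\psi(x)\|\le\|f(x)-f(x^{-1})\|+2d_0\le\theta+2d_0
\end{equation*}
for all $x\in A$, so $\psi$ is bounded. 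Since $\psi\in K_2(A,X)$ satisfies $\|\psi(x^k)\|=k\,\|\psi(x)\|$, boundedness forces $\psi\equiv0$. Hence $q=\varphi$ is a quadratic map with $\|f-q\|\le d_0$, and I take $\delta=d_0$. For uniqueness, if $q_1,q_2$ are quadratic with $f-q_i$ bounded, then $q_1-q_2$ is quadratic and bounded; since a quadratic map obeys $(q_1-q_2)(x^k)=k^2(q_1-q_2)(x)$, boundedness forces $q_1-q_2\equiv0$.

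I expect the only genuinely delicate point to be the transfer of the real-valued structural results of the previous section to the Banach-space-valued setting, which is precisely what Theorem~\ref{rangeindependent} and Proposition~\ref{proposition3.2} are designed to supply; everything else is the parity bookkeeping and the elementary observation that a bounded member of $K_2$ (respectively a bounded quadratic map) must vanish.
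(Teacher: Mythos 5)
Your proof is correct and takes essentially the same route as the paper's: both decompose $f$ as (quadratic part) $+$ (additive part) $+$ (bounded part) --- you via stability on abelian groups, Theorem~\ref{rangeindependent}, and Proposition~\ref{proposition3.2}, the paper via Theorem~\ref{theorem2.10} and Proposition~\ref{prop3.17} after reducing to $X=\mathbb{R}$ --- and then both use $\|f(x)-f(x^{-1})\|\le\theta$ in the same way to force the odd summand to be bounded and hence identically zero. Two minor points in your favor: you derive the evenness/oddness and the quadratic property of the $K_4$ and $K_2$ components directly from the Drygas substitution $z=y^{-1}$ rather than citing the structure theory, and you supply the uniqueness argument (a bounded quadratic map vanishes since $q(x^k)=k^2q(x)$), which the statement asserts but the paper's own proof never addresses.
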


\begin{proof}
According to the Theorem~\ref{rangeindependent} we can assume that
$X=\Bbb{R}$. From Theorem~\ref{theorem2.10} and Proposition~\ref{prop3.17}
it follows that there are $q(x)\in Q(A)$, $\psi\in X(A)$  and
$\gamma\in B(A)$ such that $f(x)=q(x)+\psi(x)+\gamma(x)$.
Therefore,
\begin{align*}
|\,f(x)-f(x^{-1}) \,|&= |\,
q(x)+\psi(x)+\gamma(x)-q(x^{-1})-\psi(x^{-1})-\gamma(x^{-1}) \,| \\
&= |\, 2\psi(x)+\gamma(x)-\gamma(x^{-1}) \, |\le \theta,
\end{align*}
and we see that $\psi(x) $ is a bounded function. Hence,
$\psi\equiv 0$ and $f(x) = q(x)+\gamma(x)$. If $\delta$ is a
positive real number such that $|\gamma(x)|\le \delta$\, for all $x\in
A$, then we have $|f(x)-q(x)|\le \delta$\, for all $x\in A$. The
proof is now complete.
\end{proof}

\begin{theorem}
Suppose that $A$ is an abelian group, and $X$ a real Banach space.
Let $f: A\to X$ satisfies the inequalities
\begin{eqnarray*}
\|\,f(xyz)+f(x)+f(y)+f(z)-f(xy)-f(xz)-f(yz)\,\|\le d
\\
\|\,f(x)+f(-x)\,\|\le \theta
\end{eqnarray*}
for some $d, \theta>0$ and for all $x,y,z\in A$. Then there exists
a unique additive mapping $\psi: A\to X$ which satisfies
\begin{eqnarray*}
\|\,f(x)-\psi(x)\,\|\le \delta
\end{eqnarray*}
for some positive $\delta$ and all $x\in A$.
\end{theorem}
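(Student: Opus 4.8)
The plan is to follow the proof of the preceding theorem up to the point where the second hypothesis enters, and then to exploit the opposite parity to eliminate a different summand. First I would apply Theorem~\ref{rangeindependent} to reduce to the scalar case $X=\mathbb{R}$. Since $f$ satisfies the quasikannappan inequality, Theorem~\ref{theorem2.10} together with Proposition~\ref{prop3.17} furnishes a decomposition $f(x)=q(x)+\psi(x)+\gamma(x)$ in which $q\in Q(A)$ is quadratic, $\psi\in X(A)$ is an additive character, and $\gamma\in B(A)$ is bounded.

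The decisive step is to substitute this decomposition into the hypothesis $|f(x)+f(x^{-1})|\le\theta$. Every quadratic function is even, so $q(x^{-1})=q(x)$, while every additive character is odd, so $\psi(x^{-1})=-\psi(x)$. Hence in the sum $f(x)+f(x^{-1})$ the additive contributions cancel and the quadratic contributions double, leaving $|2q(x)+\gamma(x)+\gamma(x^{-1})|\le\theta$. Because $\gamma$ is bounded, this inequality forces $q$ to be bounded on $A$.

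I would then invoke the defining relation $q(x^k)=k^2q(x)$ of $K_4(A)$: if $|q(x)|\le M$ for all $x$, then $k^2|q(x)|=|q(x^k)|\le M$ for every $k\in\mathbb{N}$, which is impossible unless $q\equiv 0$. Thus $f(x)=\psi(x)+\gamma(x)$ with $\gamma$ bounded, say $|\gamma(x)|\le\delta$, and therefore $|f(x)-\psi(x)|\le\delta$ for all $x\in A$. For uniqueness, if $\psi_1$ and $\psi_2$ are additive maps each within bounded distance of $f$, their difference $\psi_1-\psi_2$ is a bounded additive map, and the relation $(\psi_1-\psi_2)(x^k)=k(\psi_1-\psi_2)(x)$ forces it to vanish, so $\psi_1=\psi_2$.

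The only point demanding genuine care is the evenness of the quadratic summand, which underlies the whole cancellation. This I would establish by first setting $x=y=1$ in the quadratic equation $f(xy)+f(xy^{-1})=2f(x)+2f(y)$ to obtain $q(1)=0$, and then setting $x=1$ to deduce $q(y^{-1})=q(y)$. Apart from this verification, the argument is exactly dual to that of the previous theorem, the change of sign in the second hypothesis simply interchanging the roles played by the quadratic and the additive components.
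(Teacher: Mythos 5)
Your proof is correct and takes essentially the same route as the paper, whose proof of this theorem is just the remark that it is ``similar to the previous one'': you reduce to $X=\mathbb{R}$ via Theorem~\ref{rangeindependent}, decompose $f=q+\psi+\gamma$ with $q\in Q(A)$, $\psi\in X(A)$, $\gamma$ bounded using Theorem~\ref{theorem2.10} and Proposition~\ref{prop3.17}, and exploit the evenness of $q$ and oddness of $\psi$ so that the hypothesis $\|f(x)+f(-x)\|\le\theta$ bounds $q$, which together with $q(x^k)=k^2q(x)$ forces $q\equiv 0$. Your explicit checks of the evenness of quadratic functions and of uniqueness (via boundedness of a difference of additive maps) only fill in details the paper leaves implicit.
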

\begin{proof} The proof is similar to that of the previous
theorem.
\end{proof}


\section{Embedding}

\begin{remark} \label{rem1}
If $S$ is a semigroup with zero and $f\in KK(S)$, then $f$ is bounded.
\end{remark}

\begin{proof}
Since $f\in KK(S)$, the function $f$ satisfies
\begin{eqnarray*}
|\;f(xyz)+f(x)+f(y)+f(z)-f(xy)-f(xz)-f(yz)\;|\le d,
\end{eqnarray*}
for all $x,y,z\in S$ and for some $d > 0$.
If we put $y=z=0$ in the above inequality, we obtain
\begin{equation*}
|\;f(0)+f(x)+f(0)+f(0)-f(0)-f(0)-f(0)\;|\le d.
\end{equation*}
Therefore $|\;f(x)\;|\le d $.
So $f$ is a bounded function.
\end{proof}

The following corollary follows from the Remark \ref{rem1}.
\begin{corollary} 
\label{cor5}
Let $S_0$ be a semigroup obtained by adjoining the zero to the arbitrary
semigroup $S$. Then $S$ can be embedded into the semigroup
$S_0$ such that the equation $(\ref{kan})$ is stable on $S_0$.
\end{corollary}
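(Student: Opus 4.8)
The final statement, Corollary~\ref{cor5}, asserts that any semigroup $S$ embeds into the semigroup $S_0 = S \cup \{0\}$ (obtained by adjoining a zero element) in such a way that equation~$(\ref{kan})$ is stable on $S_0$.

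The plan is to deduce this directly from Remark~\ref{rem1}, which has just been established. First I would recall that the embedding itself is entirely standard: given an arbitrary semigroup $S$, one adjoins a new symbol $0$ and defines multiplication on $S_0 = S \cup \{0\}$ by extending the multiplication of $S$ via the rules $0 \cdot s = s \cdot 0 = 0$ for every $s \in S_0$ (in particular $0 \cdot 0 = 0$). One checks associativity: any product of three elements of $S_0$ that involves $0$ collapses to $0$ regardless of bracketing, and any product not involving $0$ is associative because $S$ is a semigroup. Thus $S_0$ is a genuine semigroup containing $S$ as a subsemigroup, and $0$ is a (two-sided) zero element. So the inclusion $S \hookrightarrow S_0$ is a semigroup embedding.

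Next I would invoke the stability criterion. To show equation~$(\ref{kan})$ is stable on $S_0$, I must verify that every $f \in KK(S_0)$ differs from a solution of~$(\ref{kan})$ by a bounded function. But Remark~\ref{rem1} shows precisely that every $f \in KK(S_0)$ is itself bounded, since $S_0$ is a semigroup with zero. Therefore $f$ differs from the trivial solution $\varphi \equiv 0$ (which obviously lies in $K(S_0)$) by the bounded function $f$ itself. Hence the stability condition is satisfied trivially, and equation~$(\ref{kan})$ is stable on $S_0$.

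There is essentially no serious obstacle here, since the entire content has been front-loaded into Remark~\ref{rem1}. The only point requiring a modicum of care is confirming that $S$ embeds into $S_0$ as a subsemigroup rather than merely sitting inside it as a set, i.e.\ checking that the adjunction of a zero produces an associative operation extending that of $S$ and does not alter products already defined in $S$. Once that routine verification is in place, the corollary follows immediately from the remark. I would therefore present the proof as a short paragraph: state the embedding, note that $0$ is a zero element of $S_0$, apply Remark~\ref{rem1} to conclude every quasikannappan function on $S_0$ is bounded, and observe that boundedness of all of $KK(S_0)$ is exactly the statement that~$(\ref{kan})$ is stable on $S_0$.
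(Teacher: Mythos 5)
Your proposal is correct and follows essentially the same route as the paper: the paper's proof simply cites Remark~\ref{rem1} to conclude $KK(S_0)=B(S_0)$ and hence stability, exactly as you do by taking the trivial solution $\varphi\equiv 0$. Your additional verification that adjoining a zero yields an associative operation extending that of $S$ is routine but harmless elaboration of the same argument.
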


\begin{proof}
From Remark \ref{rem1}, we have $KK(S_0)= B(S_0)$.
Hence the equation $(\ref{kan})$ is stable on $S_0$.
\end{proof}

\begin{definition}
We shall say that in a semigroup $S$ a {\it left law of reduction}
is fulfilled if any equality  $xy =xz$ in $S$ implies $y = z$.
Similarly, we shall say that in a semigroup $S$ a {\it right law
of reduction} is fulfilled if any equality  $yx =zx$ in $S$
implies $y =z$.
\end{definition}

Obviously in a semigroup with zero neither left nor right law of
reduction is fulfilled.

\medskip
The embedding presented in Corollary \ref{cor5}
does not preserve some important properties of
semigroup. For instance, if $S$ is a group $S_0$ is not necessarily a
group. Similarly, if $S$ is a semigroup with law of reduction, then $S_0$
does not have the same property.

\medskip
Our main goal in this section is to construct another embedding 
preserving properties of semigroups such as laws of reduction
and the axioms of a group. From now on let $S$ be an arbitrary
semigroup with unit $e$.

\begin{lemma} 
\label{qkan-1}
Let $f\in KK(S)$ so that
\begin{equation}\label{qkan2}
|\;   f(xyz)+f(x)+f(y)+f(z)-f(xy)-f(xz)-f(yz) \;|\le d
\end{equation}
for any $x, y, z\in S$ and for some $d > 0$.
Further, let $c$ be an element of order two. Then
\begin{align}
\label{cu}
 |\;  f(u) - f(cu)  \;|&\le 2 \, d , \\
\label{uc}
 |\; f(u) - f(uc)  \;|&\le 2 \, d, \\
\label{u^c}
 |\; f(u^c)-f(u) \;|&\le 8 \, d
\end{align}
for any $u\in S$.
\end{lemma}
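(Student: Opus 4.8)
The plan is to feed carefully chosen triples into the defining inequality \eqref{qkan2} and then strip off the unwanted constant terms using a~priori bounds on $f$ at the unit $e$ and at $c$. Since $c$ has order two we have $c^2=e$ and $c^{-1}=c$, and these relations are exactly what collapse the products in the substitutions below.

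First I would record two auxiliary bounds. Setting $x=y=z=e$ in \eqref{qkan2} gives $|f(e)|\le d$. Setting $x=y=z=c$, so that $c^2=e$ makes each product of two factors equal $e$ while the triple product equals $c^3=c$, gives $|4f(c)-3f(e)|\le d$, whence $|f(c)|\le d$ after inserting the bound on $f(e)$. These two estimates are precisely what will let residual $f(e)$ and $f(c)$ terms be absorbed into the constant.

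Next, for \eqref{cu} I would substitute $x=c$, $y=c$, $z=u$. Then $xyz=u$, $xy=e$, and $xz=yz=cu$, so \eqref{qkan2} reads $|2f(u)+2f(c)-f(e)-2f(cu)|\le d$; isolating $2f(u)-2f(cu)$ and applying $|f(c)|\le d$ and $|f(e)|\le d$ yields $|f(u)-f(cu)|\le 2d$. The bound \eqref{uc} is obtained in the same way from the mirror substitution $x=u$, $y=c$, $z=c$, for which $xyz=u$, $xy=xz=uc$, and $yz=e$.

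Finally, \eqref{u^c} follows by the triangle inequality once \eqref{cu} and \eqref{uc} are in hand: writing $u^c=cuc$, I would chain $f(cuc)\to f(uc)\to f(u)$, using \eqref{cu} applied with $uc$ in place of $u$ for the first step and \eqref{uc} for the second, so that $|f(u^c)-f(u)|\le 2d+2d=4d\le 8d$. I do not expect a genuine obstacle: the only mildly delicate point is establishing $|f(c)|\le d$ and $|f(e)|\le d$ at the outset, since without them the substitutions leave dangling $f(c)$ and $f(e)$ contributions; everything beyond that is routine triangle-inequality bookkeeping, and the stated constant $8d$ is in fact comfortably loose.
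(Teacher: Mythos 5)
Your proposal is correct. For the preliminary bounds $|f(e)|\le d$, $|f(c)|\le d$ and for the two estimates \eqref{cu} and \eqref{uc}, your substitutions ($x=y=z=e$; $x=y=z=c$; $x=y=c$, $z=u$; $x=u$, $y=z=c$) are exactly the ones used in the paper, with the same bookkeeping. Where you genuinely depart from the paper is in the proof of \eqref{u^c}: the paper makes one further substitution, $x=c$, $y=u$, $z=c$, which yields
$$|\,f(u^c)+f(u)-f(cu)-f(uc)\,|\le 4\,d,$$
and then combines this with \eqref{cu} and \eqref{uc} in two triangle-inequality steps (first bounding $|f(u^c)-f(uc)|\le 6d$, then $|f(u^c)-f(u)|\le 8d$). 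You instead observe that \eqref{u^c} is a purely formal consequence of \eqref{cu} and \eqref{uc}: writing $u^c=cuc$ and applying \eqref{cu} with $uc$ in place of $u$ gives $|f(cuc)-f(uc)|\le 2d$, and chaining with \eqref{uc} gives $|f(u^c)-f(u)|\le 4d$. Your route is shorter, needs no third substitution into the defining inequality, and produces the sharper constant $4d$ in place of the paper's $8d$; the paper's extra substitution buys nothing here beyond what \eqref{cu} and \eqref{uc} already contain. Since the lemma is only ever used qualitatively (the constants are absorbed into larger ones in Lemmas \ref{lemma4.7} and \ref{lemma4.9}), the improvement is harmless but also costless, and your argument would serve as a drop-in replacement.
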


\begin{proof}
Letting $x=y=z=e$ in $\eqref{qkan2}$, we obtain $|f(e) | \leq d$. Similarly, letting
$x=y=z=c$ in $(\ref{qkan2})$, we have $|\;   f(ccc)+3f(c)-3f(cc) \;| \le d$. Since
$c$ is an element of order two, the last inequality reduces to $|\; 4f(c)-3f(e) \;|\le d$.
Hence we have $|\; 4f(c) \;|\le d +3|f(e)|$ and consequently
\begin{equation*}\label{c}
 |\;f(c)\;|\le \frac{1}{4}\, d+\frac{3}{4}\, f(e)\le d.
\end{equation*}

\medskip
Next substituting $x=c$, $y=c$ and $z=u$ in $\eqref{qkan2}$, we have
\begin{equation*}
|\;   f(ccu)+f(c)+f(c)+f(u)-f(cc)-f(cu)-f(cu) \;|\le d .
\end{equation*}
Since $c$ is an element of order two, the last inequality yields
$$ |\;   2f(u)+2f(c)-f(e)-2f(cu)\;|\le d$$
and hence we have
$ |\;   2f(u)-2f(cu)\;|\le d+3\, d$. Therefore simplifying, we see that
\begin{equation*}
\label{cu2} |\;   f(u)-f(cu)\;|\le 2\, d
\end{equation*}
which is \eqref{cu}.

\medskip
Similarly, letting $x=u$, $y=c$ and $z=c$ in $\eqref{qkan2}$, we get
\begin{equation*}
|\;   f(ucc)+f(c)+f(c)+f(u)-f(uc)-f(uc)-f(cc) \;|\le d .
\end{equation*}
Using the fact that $c$ is of order two, we have
$$ |\;   2f(u)+2f(c)-2f(uc)-f(e)\;|\le d . $$
This last inequality yields
$ |\;   2f(u)-2f(uc)\;|\le d+3\, d$. Simplifying, we get
$$|\;   f(u)-f(uc)\;|\le 2\, d$$
which is \eqref{uc}.

\medskip
Again, substituting $x=c$, $y=u$ and $z=c$ in $\eqref{qkan2}$, we obtain
\begin{equation*}
|\;   f(cuc)+f(c)+f(c)+f(u)-f(cu)-f(uc)-f(cc) \;|\le d .
\end{equation*}
Using the fact that $c$ is of order two and simplifying, we have
$$ |\;   f(u^c)+f(u) -f(cu) -f(uc)\;|\le d+3\, d .$$
Using \eqref{cu} we obtain
\begin{align*}
|\;   f(u^c)-f(uc)\;| &= | \;  f(u^c) + f(u) - f(cu) - f(uc) + f(cu) - f(u) \;|  \\
&\leq  | \;  f(u^c) + f(u) - f(cu) - f(uc) \;|  +  | \; f(cu) - f(u) \;| \\
&\leq 4\, d + 2\, d = 6\, d .
\end{align*}
Similarly using \eqref{uc} we have
$$ |\;   f(u^c)-f(cu)\;|\le 6\, d. $$
Now taking into account the last inequality and \eqref{cu}, we get
\begin{align*}
|\;   f(u^c)-f(u)\;| &= |\;  f(u^c)- f(cu) + f(cu) - f(u) \; | \\
&=  |\;  f(u^c)- f(cu) \; | + |\; f(cu) - f(u) \; | \\
&\leq 6\, d + 2\, d =  8\, d.
\end{align*}
The proof of the lemma is now complete.
\end{proof}

Now consider semidirect product $H=K \semidirect S$ of semigroup $S$ and
a group $K$, where elements of $K$ act on $S$ by automorphisms.
Also we suppose that every non unit element of $K$ has order two.

\begin{lemma}\label{lemma4.7}
Suppose that  $f\in PK_4(G)$ and satisfies  condition
~$(\ref{qkan2})$ on $H$. Let $b, c, bc \in K$ be the elements of
order two. Suppose for $u\in S$ the elements $u^{bc}, \, u^c, \,
u$ generate an abelian subsemigroup, then
\begin{equation}\label{9k}
f(u^{bc}u^cu)=9\, f(u) \qquad \forall \, u\in S.
\end{equation}
\end{lemma}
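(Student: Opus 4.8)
The plan is to exploit a structural feature of the semidirect product $H=K\semidirect S$: for the triple $(u^{bc},u^c,u)$, each of the three pairwise products entering the quasi-Kannappan inequality \eqref{qkan2} is in fact a \emph{square} of a single element of $H$. Writing the action as $u^k=kuk^{-1}$ (so $u^c=cuc$ since $c^2=e$, and likewise for $b$ and $bc$), and using the commutation rule $ks=s^k\,k$ valid for $k\in K$, $s\in S$, I would first verify the three identities
\begin{align*}
(cu)^2 &= u^c u, \\
(bu^c)^2 &= u^{bc}u^c, \\
((bc)u)^2 &= u^{bc}u.
\end{align*}
Each uses only that the relevant element of $K$ has order two: from $cu=u^c c$ one gets $(cu)^2=u^c c^2 u=u^cu$, from $bu^c=u^{bc}b$ one gets $(bu^c)^2=u^{bc}b^2u^c=u^{bc}u^c$, and from $(bc)u=u^{bc}(bc)$ one gets $((bc)u)^2=u^{bc}(bc)^2u=u^{bc}u$.

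Since $f\in PK_4(H)$ obeys $f(x^k)=k^2f(x)$, and in particular $f(x^2)=4f(x)$, these identities give
\begin{equation*}
f(u^cu)=4f(cu),\qquad f(u^{bc}u^c)=4f(bu^c),\qquad f(u^{bc}u)=4f((bc)u).
\end{equation*}
Now Lemma~\ref{qkan-1} provides the near-invariance estimates $|f(cu)-f(u)|\le 2d$, $|f(bu^c)-f(u^c)|\le 2d$ and $|f((bc)u)-f(u)|\le 2d$ (this is \eqref{cu}, applied with the order-two elements $c$, $b$, and $bc$ respectively), while the estimate $|f(u^c)-f(u)|\le 8d$ of the same lemma, applied also with $bc$, gives $|f(u^c)-f(u)|\le 8d$ and $|f(u^{bc})-f(u)|\le 8d$. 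Together these place each of $f(u^cu)$, $f(u^{bc}u^c)$, $f(u^{bc}u)$ within a fixed multiple of $d$ of $4f(u)$, and the three linear terms within a fixed multiple of $d$ of $f(u)$. Substituting all of this into \eqref{qkan2} for $(x,y,z)=(u^{bc},u^c,u)$ then yields
\begin{equation*}
|\,f(u^{bc}u^cu)-9f(u)\,|\le C\,d
\end{equation*}
for some constant $C$ independent of $u$.

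To upgrade this approximate identity to the exact equality \eqref{9k}, I would rerun the argument with $u$ replaced by $u^n$. The elements $(u^n)^{bc}=(u^{bc})^n$, $(u^n)^c=(u^c)^n$, $u^n$ all lie in the abelian subsemigroup generated by $u^{bc},u^c,u$, so the hypothesis is preserved and the same bound gives $|f((u^{bc})^n(u^c)^n u^n)-9f(u^n)|\le C\,d$. Because this subsemigroup is abelian, $(u^{bc})^n(u^c)^n u^n=(u^{bc}u^cu)^n$, and the $PK_4$ scaling converts the left-hand term into $n^2f(u^{bc}u^cu)$ and $f(u^n)$ into $n^2f(u)$. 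Hence $n^2\,|f(u^{bc}u^cu)-9f(u)|\le C\,d$ for every positive integer $n$, and letting $n\to\infty$ forces $f(u^{bc}u^cu)=9f(u)$.

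The main obstacle is the first step: recognizing that the mixed products $u^cu$, $u^{bc}u^c$, $u^{bc}u$ are perfect squares in $H$, which is exactly what turns the linear scaling $f(x^2)=4f(x)$ into the factor $4$ on each pairwise term and ultimately into the total $9=3^2$. Once the square identities are spotted, the remainder is the routine combination of Lemma~\ref{qkan-1}, the $PK_4$ property, and the standard $n\to\infty$ sharpening that eliminates the additive constant. The only points needing care are the bookkeeping of the (immaterial) constant $C$ and checking that the order-two relations among $b$, $c$, $bc$ are precisely what make the three square identities valid.
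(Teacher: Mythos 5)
Your proof is correct, and it reaches the crucial intermediate estimate $|f(u^{bc}u^cu)-9f(u)|\le Cd$ by a genuinely different mechanism than the paper. The paper applies Lemma~\ref{lemma2.2} with $n=5$ to the word $ubucu$, uses $|f(b)|,|f(c)|,|f(bc)|\le d$ for the order-two elements, rewrites $ubucu=bc\,u^{bc}u^cu$ via the semidirect-product relations, and then invokes Lemma~\ref{qkan-1} to strip the prefix $bc$ and replace $f(ub),f(uc),f(bu),f(cu)$ by $f(u)$; combined with $f(u^2)=4f(u)$ this yields the bound with constant $27$. You instead apply the defining three-variable inequality \eqref{qkan2} directly to the triple $(u^{bc},u^c,u)$ and dispose of the three pairwise products through the square identities $u^cu=(cu)^2$, $u^{bc}u^c=(bu^c)^2$, $u^{bc}u=((bc)u)^2$, converting each via the $PK_4$ scaling $f(x^2)=4f(x)$; this bypasses Lemma~\ref{lemma2.2} entirely and makes the origin of $9=3\cdot 4-3$ transparent. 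The trade-off is that your route invokes the $PK_4$ property on the mixed elements $cu,\;bu^c,\;(bc)u$ of $H$ lying outside $S$, whereas the paper uses it only on elements of $S$; this is legitimate here, since $f$ is assumed $PK_4$ on the whole semidirect product (as in the application in Lemma~\ref{lemma-2}, where $f\in PK_4(S\wr C)$), but it is worth flagging as the one point where your argument consumes slightly more of the hypothesis. Your concluding step --- replacing $u$ by $u^n$, using the abelian hypothesis to get $(u^n)^{bc}(u^n)^cu^n=(u^{bc}u^cu)^n$, applying the $PK_4$ scaling, and letting $n\to\infty$ to kill the additive constant --- is identical to the paper's.
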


\begin{proof}
Using Lemma \ref{lemma2.2} with $n=5$ and
$x_1=u, x_2=b, x_3=u, x_4=c, x_5=u$, we get
\begin{align*}
|\;f(ubucu)+3[ 3f(u) &+ f(b) + f(c)] - 3f(u^2) - f(ub) \qquad\qquad\qquad \\
 &- \, 2 f(uc) - 2 f(bu) - f(cu) - f(bc) \;|\le 6d.
\end{align*}
Now taking into account relations $|f(b)|\le d, \, |f(c)|\le d$, and
$|f(bc)|\le d$, we obtain

\begin{equation*}
|\;f(ubucu)+9f(u)-3f(u^2)-f(ub)-2f(uc)-2f(bu)-f(cu) \;|\le 13\, d
\end{equation*}
which is
\begin{equation*}
|\;f(bcu^{bc}u^cu)+9f(u)-3f(u^2)-f(ub)-2f(uc)-2f(bu)-f(cu) \;|\le
13 \, d.
\end{equation*}
Now using~$(\ref{uc})$ and ~$(\ref{cu})$
 we get
\begin{equation*}
|\;f(u^{bc}u^cu)+9f(u)-3f(u^2)-f(u)-2f(u)-2f(u)-f(u) \;|\le 13 \,
d+14 \, d=27\,d.
\end{equation*}
Using $ f( u^2) = 4\, f(u)$, the last inequality yields
\begin{equation*}
|\;f(u^{bc}u^cu)-9f(u)\;|\le 27\, d.
\end{equation*}
Therefore for any $n \in \Bbb{N}$ we have
\begin{align*}
n^2|f(u^{bc}u^cu)-9f(u)|&=|f((u^{bc}u^cu)^n)-9f(u^n)|\\
&=|f((u^n)^{bc}(u^n)^cu^n)-9f(u^n)|\le 27 d.
\end{align*}
Thus we have
\begin{equation*}
\;f(u^{bc}u^cu)=9 \, f(u)
\end{equation*}
and the proof of the lemma is complete.
\end{proof}

\begin{lemma}\label{lemma4.9}
Let $f$ be an element of  $PK_2(G)$ satisfying condition
~$(\ref{qkan2})$ on $H$.  Let $b, c, bc \in K$ be the elements of
order two. Suppose for $u\in S$ the elements $u^{bc}, \, u^c, \,
u$ generate an abelian subsemigroup, then
\begin{equation}\label{3k}
f(u^{bc}u^cu)=3\, f(u) \qquad \forall \, u\in S.
\end{equation}
\end{lemma}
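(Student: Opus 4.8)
The plan is to mirror the proof of Lemma~\ref{lemma4.7} almost verbatim, since $PK_{2}(G)$ and $PK_{4}(G)$ differ only in the power law $f(x^{n})=n\,f(x)$ versus $f(x^{n})=n^{2}\,f(x)$, and only two steps feel that difference. First I would apply Lemma~\ref{lemma2.2} with $n=5$ and $x_{1}=u,\ x_{2}=b,\ x_{3}=u,\ x_{4}=c,\ x_{5}=u$. Since $f$ satisfies \eqref{qkan2} on $H$, inequality \eqref{eq:1.2} gives
\[
|\,f(ubucu)+3[3f(u)+f(b)+f(c)]-3f(u^{2})-f(ub)-2f(uc)-2f(bu)-f(cu)-f(bc)\,|\le 6d,
\]
and absorbing $f(b),f(c),f(bc)$, each bounded by $d$ as computed in the proof of Lemma~\ref{qkan-1}, tightens this to
\[
|\,f(ubucu)+9f(u)-3f(u^{2})-f(ub)-2f(uc)-2f(bu)-f(cu)\,|\le 13d.
\]

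Next I would pass to a normal form in the semidirect product. As every nonidentity element of $K$ has order two, $K$ is abelian and the action satisfies $(u^{b})^{c}=u^{bc}$, so exactly as in Lemma~\ref{lemma4.7} one has $ubucu=bc\,u^{bc}u^{c}u$. I would then invoke \eqref{cu} and \eqref{uc}, which hold for any order-two element and therefore for $b$, $c$, and $bc$ alike, to replace each of $f(ub),f(uc),f(bu),f(cu)$ by $f(u)$ and to strip the prefix $bc$ from $f(bc\,u^{bc}u^{c}u)$. Tracking the slack (seven applications of a $2d$ estimate, hence an extra $14d$) enlarges the bound to $27d$ and leaves
\[
|\,f(u^{bc}u^{c}u)+9f(u)-3f(u^{2})-6f(u)\,|\le 27d.
\]

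The single genuine departure from Lemma~\ref{lemma4.7} comes here: because $f\in PK_{2}(G)$ we have $f(u^{2})=2f(u)$ rather than $4f(u)$, so $3f(u^{2})=6f(u)$ and the estimate collapses to $|\,f(u^{bc}u^{c}u)-3f(u)\,|\le 27d$, already carrying the constant $3$ of \eqref{3k}. To turn this approximate identity into an exact one I would use the abelian subsemigroup hypothesis: since $u^{bc},u^{c},u$ commute and $K$ acts by automorphisms, $(u^{bc}u^{c}u)^{n}=(u^{n})^{bc}(u^{n})^{c}u^{n}$, so applying the last estimate with $u$ replaced by $u^{n}$ and using the scaling $f(x^{n})=n\,f(x)$ yields
\[
n\,|\,f(u^{bc}u^{c}u)-3f(u)\,|=|\,f((u^{n})^{bc}(u^{n})^{c}u^{n})-3f(u^{n})\,|\le 27d
\]
for every $n\in\Bbb{N}$; letting $n\to\infty$ forces $f(u^{bc}u^{c}u)=3f(u)$. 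I expect the only delicate points to be the constant bookkeeping of the second paragraph and checking that \eqref{cu}--\eqref{uc} apply to $bc$ as well as to $b$ and $c$; the closing scaling step is then immediate, being linear in $n$ here rather than quadratic as in Lemma~\ref{lemma4.7}.
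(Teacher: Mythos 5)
Your proposal is correct and is essentially the paper's own proof: the same application of Lemma~\ref{lemma2.2} with $n=5$ and $x_1=u,\,x_2=b,\,x_3=u,\,x_4=c,\,x_5=u$, the same absorption of $f(b)$, $f(c)$, $f(bc)$ and the seven $2d$-estimates from \eqref{cu}--\eqref{uc} to reach the bound $27d$, the same use of $f(u^2)=2f(u)$, and the same power-scaling step to kill the bounded error. The one discrepancy is to your credit: the scaling factor for $f\in PK_2(G)$ is indeed linear, $n\,|f(u^{bc}u^cu)-3f(u)|\le 27d$, as you write, whereas the paper's text carries over the factor $n^2$ from Lemma~\ref{lemma4.7} --- a harmless slip, since either factor forces $f(u^{bc}u^cu)=3f(u)$.
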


\begin{proof}
Using Lemma \ref{lemma2.2} with $n=5$ and $x_1=u, x_2=b, x_3=u,
x_4=c, x_5=u$, we get
\begin{align*}
|\;f(ubucu)+3[ 3f(u) &+ f(b) + f(c)] - 3f(u^2) - f(ub) \qquad\qquad\qquad \\
 &- \, 2 f(uc) - 2 f(bu) - f(cu) - f(bc) \;|\le 6d.
\end{align*}
Now taking into account relations $|f(b)|\le d, \, |f(c)|\le d$,
and $|f(bc)|\le d$, we obtain

\begin{equation*}
|\;f(ubucu)+9f(u)-3f(u^2)-f(ub)-2f(uc)-2f(bu)-f(cu) \;|\le 13\, d
\end{equation*}
which is
\begin{equation*}
|\;f(bcu^{bc}u^cu)+9f(u)-3f(u^2)-f(ub)-2f(uc)-2f(bu)-f(cu) \;|\le
13 \, d.
\end{equation*}
Now using~$(\ref{uc})$ and ~$(\ref{cu})$
 we get
\begin{equation*}
|\;f(u^{bc}u^cu)+9f(u)-3f(u^2)-f(u)-2f(u)-2f(u)-f(u) \;|\le 13 \,
d+14 \, d=27\,d.
\end{equation*}
Using $ f( u^2) = 2 \, f(u)$, the last inequality yields
\begin{equation*}
|\;f(u^{bc}u^cu)-3f(u)\;|\le 27\, d.
\end{equation*}
Therefore for any $n \in \Bbb{N}$ we have
\begin{align*}
n^2|f(u^{bc}u^cu)-3f(u)|&=|f((u^{bc}u^cu)^n)-3f(u^n)|\\
&=|f((u^n)^{bc}(u^n)^cu^n)-3f(u^n)|\le 27 d.
\end{align*}
Thus we have
\begin{equation*}
\;f(u^{bc}u^cu)=3 \, f(u)
\end{equation*}
and the proof of the lemma is complete.
\end{proof}

Let $S$ be an arbitrary semigroup with unit and $B$ a group. For
each $b\in B$ denote by $S(b)$ a group that is isomorphic to $S$
under isomorphism $a\to a(b)$. Denote by $H= S^{(B)}=\prod_{b\in
B}S(b)$ the direct product of groups $S(b)$. It is clear that if
$a_1(b_1)a_2(b_2)\cdots a_k(b_k)$ is an element of $H$, then for
any $b\in B$, the mapping
$$
b^* : a_1(b_1)a_2(b_2)\cdots a_k(b_k)\to a_1(b_1b)a_2(b_2b)\cdots a_k(b_kb)
$$
is an automorphism of $D$ and $b\to b^*$ is an embedding of $B$
into $Aut\,H$. Thus, we can form a semidirect product $G= B\semidirect
H$. This semigroup is called {\it the wreath product\/} of the
semigroup $S$ and the group $B$, and will be denoted by $G=S\wr B$. We
will identify the group $S$ with subgroup $S(1)$ of $H$, where
$1\in B$. Hence, we can assume that $S$ is a subgroup of $H$.

\medskip
Let us denote, by $C$, the group of order four having generators
$b, c$ and defining relations: $b^2=c^2=1, bc=cb$.
Consider the semigroup group $S\wr C$.

\begin{lemma}
\label{lemma-2} 
Suppose that $f\in PK_4(S\wr C)$. If for some
$x,y,z\in S$ we have
$$
|f(xyz)+f(x)+f(y)+f(z)-f(xy)-f(xz)-f(yz)|=\delta > 0
$$
then for some $x_1,y_1,z_1\in H$ we have
$$
|f(x_1y_1z_1)+f(x_1)+f(y_1)+f(z_1)-f(x_1y_1)-f(x_1z_1)-f(y_1z_1)|=9\delta.
$$
\end{lemma}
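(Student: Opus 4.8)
The plan is to amplify the Kannappan defect at $(x,y,z)\in S$ by the factor $9$ by pushing the triple through the homomorphism $\Phi\colon S\to H$ defined by $\Phi(u)=u^{bc}u^{c}u$, and then invoking Lemma~\ref{lemma4.7} seven times. First I would set
\begin{equation*}
x_1=\Phi(x)=x^{bc}x^{c}x,\qquad y_1=\Phi(y)=y^{bc}y^{c}y,\qquad z_1=\Phi(z)=z^{bc}z^{c}z .
\end{equation*}
These lie in $H$, because for $u\in S=S(1)$ the three factors $u^{bc},u^{c},u$ sit in the three \emph{distinct} direct factors $S(bc),S(c),S(1)$ of $H=\prod_{t\in C}S(t)$.

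The structural heart of the argument is that $\Phi$ is a homomorphism. Since elements lying in distinct factors $S(t)$ of the direct product $H$ commute, while each map $u\mapsto u^{t}$ is a homomorphism $S(1)\to S(t)$, I would rearrange
\begin{align*}
\Phi(u)\Phi(v)&=u^{bc}u^{c}u\,v^{bc}v^{c}v
=(u^{bc}v^{bc})(u^{c}v^{c})(uv)
=(uv)^{bc}(uv)^{c}(uv)=\Phi(uv),
\end{align*}
where $v^{bc}$ and $v^{c}$ are moved leftward past those factors of $u$ that live in different copies $S(t)$, and the order is preserved \emph{within} each copy exactly so that $u^{t}v^{t}=(uv)^{t}$. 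Consequently $x_1y_1=\Phi(xy)$, $x_1z_1=\Phi(xz)$, $y_1z_1=\Phi(yz)$, and $x_1y_1z_1=\Phi(xyz)$; note that these products land back in $S$ since $S$ is a subsemigroup.

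Next I would apply Lemma~\ref{lemma4.7} to each of the seven elements $u\in\{x,y,z,xy,xz,yz,xyz\}$ of $S$. For each such $u$ the elements $u^{bc},u^{c},u$ lie in distinct direct factors and hence generate an abelian subsemigroup, so the hypothesis of Lemma~\ref{lemma4.7} is automatically satisfied, giving $f(\Phi(u))=f(u^{bc}u^{c}u)=9f(u)$. Substituting these identities into the Kannappan expression at $(x_1,y_1,z_1)$ yields
\begin{align*}
f(x_1y_1z_1)&+f(x_1)+f(y_1)+f(z_1)-f(x_1y_1)-f(x_1z_1)-f(y_1z_1)\\
&=9\big[\,f(xyz)+f(x)+f(y)+f(z)-f(xy)-f(xz)-f(yz)\,\big],
\end{align*}
whose absolute value is $9\delta$, as claimed.

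The main obstacle is the careful bookkeeping in establishing $\Phi(u)\Phi(v)=\Phi(uv)$: one must keep straight which factors commute (those indexed by distinct elements of $C$) and which do not (those in a common $S(t)$, whose relative order must be left intact). Once the homomorphism property of $\Phi$ is in hand, the remainder is just linearity together with the seven applications of Lemma~\ref{lemma4.7}; the only other point to check, namely that each $\Phi(u)$ meets the abelian-generation hypothesis of that lemma, is immediate from the direct-product structure of $H$.
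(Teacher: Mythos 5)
Your proof is correct and takes essentially the same approach as the paper: the paper likewise sends $x,y,z$ into $H$ via the map $u\mapsto uu^{b}u^{c}$ (a harmless relabeling of your $\Phi(u)=u^{bc}u^{c}u$, since $b$, $c$, $bc$ play symmetric roles), uses commutation of elements lying in distinct direct factors of $H$ to rearrange the products, and applies Lemma~\ref{lemma4.7} to the seven elements $x,y,z,xy,xz,yz,xyz$. Your write-up merely makes the homomorphism property of $\Phi$ more explicit than the paper does.
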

\begin{proof}
Let $x_1=xyz$, $y_1=x_1^b$, $z_1=x_1^c$. We have $x_1\in S(1),
x_1^b\in S(b), x_1^c\in S(c)$, therefore subsemigroup generated by
$x_1, x_1^b, x_1^c$ is an abelian semigroup. Applying
Lemma~\ref{lemma4.7} we get
\begin{equation*}
f(xyz(xyz)^b(xyz)^c)=f(xx^bx^cyy^by^czz^bz^c),
\end{equation*}
\begin{align*}
&|\,f(xx^bx^cyy^by^czz^bz^c)+f(xx^bx^c) +f(yy^by^c)+f(zz^bz^c)  \\
&\qquad\qquad\quad - \, f(xx^bx^cyy^by^c)-f(xx^bx^czz^bz^c)-f(yy^by^czz^bz^c)\,| \\
&\qquad =9|f(xyz)+f(x)+f(y)+f(z)-f(xy)-f(xz)-f(yz)|=9\delta
\end{align*}
and the proof is now complete.
\end{proof}

\begin{lemma} \label{lemma-2a} 
Suppose that $f\in PK_2(S\wr C)$. If for some
$x,y,z\in S$ we have
$$
|f(xyz)+f(x)+f(y)+f(z)-f(xy)-f(xz)-f(yz)|=\delta > 0
$$
then for some $x_1,y_1,z_1\in H$ we have
$$
|f(x_1y_1z_1)+f(x_1)+f(y_1)+f(z_1)-f(x_1y_1)-f(x_1z_1)-f(y_1z_1)|=3\delta.
$$
\end{lemma}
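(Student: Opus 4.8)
The plan is to follow the proof of Lemma~\ref{lemma-2} line for line, the only change being that the factor $9$ produced by Lemma~\ref{lemma4.7} is everywhere replaced by the factor $3$ produced by Lemma~\ref{lemma4.9}, since now $f\in PK_2(S\wr C)$. For a word $w$ in $x,y,z$ write $\bar w = w^{bc}w^cw\in H$. Here $w\in S(1)$, $w^c\in S(c)$ and $w^{bc}\in S(bc)$ lie in three distinct factors of the direct product $H$, so they commute and generate an abelian subsemigroup; hence Lemma~\ref{lemma4.9} applied to $u=w$ gives $f(\bar w)=3\,f(w)$.

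First I would take as witnesses $x_1=\bar x$, $y_1=\bar y$, $z_1=\bar z$, all of which lie in $H$. Because elements of distinct factors of $H$ commute and because $c$ and $bc$ act as automorphisms, collecting components in $S(bc)$, $S(c)$ and $S(1)$ yields the rearrangement identities
\begin{align*}
x_1y_1z_1 &= (xyz)^{bc}(xyz)^c(xyz)=\overline{xyz}, \qquad x_1y_1=\overline{xy},\\
x_1z_1 &= \overline{xz}, \qquad y_1z_1=\overline{yz}.
\end{align*}
Thus every one of the seven terms appearing in the Kannappan expression evaluated at $(x_1,y_1,z_1)$ has the form $f(\bar w)$ for a suitable word $w$, and Lemma~\ref{lemma4.9} converts each of them into $3\,f(w)$.

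Substituting these equalities, I would obtain
\begin{align*}
f(x_1y_1z_1)&+f(x_1)+f(y_1)+f(z_1)-f(x_1y_1)-f(x_1z_1)-f(y_1z_1)\\
&=3\big[f(xyz)+f(x)+f(y)+f(z)-f(xy)-f(xz)-f(yz)\big],
\end{align*}
and taking absolute values gives exactly $3\delta$, as required. The computation is entirely mechanical once Lemma~\ref{lemma4.9} is available; the only point needing attention is the bookkeeping for the rearrangement identities above, namely that multiplying the elements $\bar x,\bar y,\bar z$ in $H$ and collecting factors is compatible with forming the products $xy,xz,yz,xyz$ inside $S(1)$ — this is precisely where the commutativity of the factors $S(1),S(c),S(bc)$ and the automorphism property of $c$ and $bc$ are used. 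The substantive work, producing the scaling factor $3$ (rather than the $9$ of the $PK_4$ case, which ultimately traces back to $f(u^2)=2f(u)$ instead of $f(u^2)=4f(u)$), has already been carried out in Lemma~\ref{lemma4.9}, so no genuinely new difficulty arises here.
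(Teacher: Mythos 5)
Your proposal is correct and follows essentially the same route as the paper: the paper likewise takes the witnesses $\bar x, \bar y, \bar z$ (with $\bar w = w^{bc}w^c w$ lying in distinct factors $S(bc)$, $S(c)$, $S(1)$ of $H$), uses the commutation/automorphism rearrangement $\bar x\bar y\bar z=\overline{xyz}$, $\bar x\bar y=\overline{xy}$, etc., and applies Lemma~\ref{lemma4.9} to each of the seven terms to extract the factor $3$. If anything, your write-up is slightly cleaner, since the paper's displayed formulas conjugate by $b$ rather than $bc$, which is a minor notational slip relative to the statement of Lemma~\ref{lemma4.9}.
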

\begin{proof}
Let $x_1=xyz$, $y_1=x_1^b$, $z_1=x_1^c$. We have $x_1\in S(1), \, 
x_1^b\in S(b), \, x_1^c\in S(c)$, therefore subsemigroup generated by
$x_1, \, x_1^b, \, x_1^c$ is an abelian semigroup. Applying
Lemma~\ref{lemma4.9} we get
\begin{equation*}
f(xyz(xyz)^b(xyz)^c)=f(xx^bx^cyy^by^czz^bz^c),
\end{equation*}
\begin{align*}
&|\,f(xx^bx^cyy^by^czz^bz^c)+f(xx^bx^c) +f(yy^by^c)+f(zz^bz^c) \\
&\qquad\qquad\quad - \, f(xx^bx^cyy^by^c)-f(xx^bx^czz^bz^c)-f(yy^by^czz^bz^c)\,| \\
&\qquad =3\, |f(xyz)+f(x)+f(y)+f(z)-f(xy)-f(xz)-f(yz)|=3\delta
\end{align*}
and the proof is now complete.
\end{proof}

\begin{theorem} Let $S$ be a semigroup
with left (or right) law of reduction. Then $S$ can be embedded
into a semigroup $G$ with the left (or right respectively) law of
reduction and the equation $(\ref{kan})$ is stable on $G$.
Moreover,  if $S$ is a group then $G$ is a group too.
\end{theorem}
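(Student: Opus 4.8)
The plan is to embed $S$ into a direct limit of iterated wreath products with the Klein four-group $C$, on which the amplification Lemmas~\ref{lemma-2} and~\ref{lemma-2a} force every quasikannappan function to be a genuine kannappan function; stability on the limit then follows from Proposition~\ref{proposition3.3}.

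First I would set $G_0=S$ and define recursively $G_{n+1}=G_n\wr C$ by the wreath product construction above (taking $B=C$), so that $G_{n+1}=C\semidirect H_n$ with $H_n=\prod_{d\in C}G_n(d)$ a product of four copies of $G_n$. The identification of the base copy $G_n(1)$ inside $H_n\subseteq G_{n+1}$ gives an injective unital homomorphism $G_n\hookrightarrow G_{n+1}$; I let $G=\bigcup_{n\ge 0}G_n$ be the direct limit, into which $S=G_0$ embeds. Next I would check that the construction preserves the reduction laws and the group axioms. Since $C$ is finite, $H_n$ is a finite direct product of copies of $G_n$ and so inherits the left law of reduction componentwise; and because $C$ is a group acting on $H_n$ by automorphisms, left cancellation passes to $C\semidirect H_n$: from $(\gamma,h)(\gamma_1,h_1)=(\gamma,h)(\gamma_2,h_2)$ one gets $\gamma_1=\gamma_2$ (as $C$ is a group) and then $h\,\gamma(h_1)=h\,\gamma(h_2)$, whence $h_1=h_2$ by left reduction in $H_n$ and bijectivity of the action of $\gamma$. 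The right case is symmetric. If $S$ is a group then each $H_n$ is a group and each $G_{n+1}=C\semidirect H_n$ is a semidirect product of groups, hence a group. All these properties are preserved under direct limits along injective unital homomorphisms, so $G$ carries the same reduction law as $S$ and is a group whenever $S$ is.

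The core of the argument is the stability step. By Theorem~\ref{rangeindependent} it suffices to take $X=\mathbb{R}$, and by Proposition~\ref{proposition3.3} stability on $G$ is equivalent to $PK_4(G)=K_4(G)$ and $PK_2(G)=K_2(G)$. Let $f\in PK_4(G)$, so that the defect $|f(xyz)+f(x)+f(y)+f(z)-f(xy)-f(xz)-f(yz)|$ is bounded by some $d$ throughout $G$, and suppose toward a contradiction that $f\notin K_4(G)$. Then this defect equals some $\delta>0$ at a triple lying in some $G_n$. Restricting $f$ to $G_{n+1}=G_n\wr C$ yields an element of $PK_4(G_{n+1})$, so Lemma~\ref{lemma-2}, applied with base semigroup $G_n$, produces a triple in $H_n\subseteq G_{n+1}$ at which the defect equals $9\delta$. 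Feeding this triple into the next wreath product $G_{n+2}=G_{n+1}\wr C$ and iterating $k$ times, I obtain a triple in $G_{n+k}$ at which the defect equals $9^k\delta$; since $9^k\delta\to\infty$, this contradicts the uniform bound $d$, so $\delta=0$ and $PK_4(G)=K_4(G)$. The same argument with Lemma~\ref{lemma-2a}, whose amplification factor is $3$, gives $PK_2(G)=K_2(G)$. Proposition~\ref{proposition3.3} then shows that equation~\eqref{kan} is stable on $G$, and together with the structural facts above this is exactly the assertion of the theorem.

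I expect the only real obstacle to be bookkeeping in the iteration: each application of Lemma~\ref{lemma-2} (or~\ref{lemma-2a}) requires its input triple to lie in the base semigroup of the wreath product being formed, and one must verify that the amplified triple produced at each stage lands in $G_{n+k}$ and can serve as the input at the next stage. This is automatic here because the output of Lemma~\ref{lemma-2} already lies in the base factor $H$, which is contained in the next ground semigroup $G_{n+k}$.
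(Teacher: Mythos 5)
Your proposal is correct and follows essentially the same route as the paper's own proof: iterated wreath products with the Klein four-group, passage to the direct limit, amplification of the defect by a factor of $9$ (Lemma~\ref{lemma-2}) for $PK_4$ and a factor of $3$ (Lemma~\ref{lemma-2a}) for $PK_2$ to force $PK_4(G)=K_4(G)$ and $PK_2(G)=K_2(G)$, and then Proposition~\ref{proposition3.3}. The only difference is that you explicitly verify preservation of the cancellation laws and group axioms under wreath products and direct limits, which the paper asserts without proof.
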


\begin{proof}
Let $C_i$, for $i\in \N$, be a group of order 4 with two
generators $b_i, c_i$ and defining relations $b_i^2=1, c_i^2=1,
b_ic_i=c_ib_i$. Consider the chain of groups defined as follows:
$$
S_1=S,\, S_2=S_1\wr C_1,\, S_3=S_2\wr C_2,\, \dots ,
\,S_{k+1}=S_k\wr C_k, \dots
$$
Define a chain of embeddings
\begin{equation}
\label{chain} S_1=S\,\to \,  S_2=S_1\wr C_1\,\to \, S_3=S_2\wr
C_2\,\to \, \dots \to \, \,S_{k+1}=S_k\wr C_k\to \, \dots
\end{equation}
by identifying $S_k$ with $S_k(1)$ a subgroup of $S_{k+1}$. Let
$G$ be the direct limit of the chain~(\ref{chain}). Then we have
$G=\cup_{k\in \N}S_k$ and
$$
S_1\subset S_2\subset \dots \subset S_k \subset S_{k+1} \subset
\dots \dots\,\, \subset G.
$$
Let $f\in PK_4(G)$, and let for $k\in \N$
$$
\delta_k =\sup\big\{|\, f(xyz)+f(x)+f(y)+f(z)-f(xy)-f(xz)-f(yz)
\,|\, ; \,\, x, y, z\in S_k \big\}.
$$
Let us verify that $\delta_k =0$ for any $k$. Suppose that
$\delta_1 >0$. Then for some $x_1,y_1,z_1$ in $S_1$, we have
$$
|\,
f(x_1y_1z_1)+f(x_1)+f(y_1)+f(z_1)-f(x_1y_1)-f(x_1z_1)-f(y_1z_1)
\,|=\delta >0.
$$
By Lemma~\ref{lemma-2} there are $x_2,y_2,z_2\in S_2$ such that
$$
|\,
f(x_2y_2z_2)+f(x_2)+f(y_2)+f(z_2)-f(x_2y_2)-f(x_2z_2)-f(y_2z_2)
\,|=9\delta >0.
$$
By repeated applications of Lemma~\ref{lemma-2} we obtain, for any $k\in
\Bbb{N}$, there are $x_k,y_k,z_k\in S_k$ such that
$$
|\,
f(x_ky_kz_k)+f(x_k)+f(y_k)+f(z_k)-f(x_ky_k)-f(x_kz_k)-f(y_kz_k).
\,|=9^{k-1}\delta >0
$$
This gives  a contradiction to the assumption that $f\in PK_4(G)$.
Therefore $\delta_1=0$. Similarly,
using Lemma \ref{lemma-2a},
we verify that $\delta_n=0$
for any $n\in\N$. So, $PK_4(G)=K_4(G)$. Similarly we verify that
$PK_2(G)=K_2(G)$. Thus by Proposition~\ref{proposition3.3} we
get $PK(G)=K(G)$ and the equation~\eqref{kan} is stable on $G$.
This finishes the proof of the theorem.
\end{proof}

\medskip
\centerline{\small ACKNOWLEDGMENTS}
\medskip

The work was partially supported by an IRI Grant from the Office of
 the Vice President for Research, University of Louisville.

\small


\begin{thebibliography}{99}
\bibitem{ck}
I.-S. Chang and H.-M. Kim,
{\it Hyers-Ulam-Rassias stability of a quadratic functional equation\/},
Kyungpook Math. J.
{\bf 42} (2002), 71-86.
\bibitem{ch}
P. W. Cholewa,
{\it Remarks on the stability of functional equations\/},
Aequationes Math.
{\bf 27} (1984), 76-86.
\bibitem{cz1}
S. Czerwik,
{\it On the stability of the quadratic mapping in normed spaces\/},
Abh. Math. Sem. Univ. Hamburg
{\bf 62} (1992), 59--64.
\bibitem{cz2}
S. Czerwik,
{\it The stability of the quadratic functional equation\/},
 In: Stability of Mappings of Hyers-Ulam Type
(ed. Th. M. Rassias \& J. Tabor),
 Hadronic Press, Florida, 1994, pp. 81--91.
\bibitem{cz3}
S. Czerwik and K. Dlutek,
{\it Quadratic difference operators in $L_p$ spaces\/},
Aequationes Math.
{\bf 67} (20044), 1--11.
\bibitem{cz4}
S. Czerwik and K. Dlutek,
{\it Stability of the quadratic functional equation  in Lipschitz spaces\/},
J. Math. Anal. Appli.
{\bf 293} (2004), 79--88
\bibitem{eks}
B. R. Ebanks, PL. Kannappan and P. K. Sahoo,
{\it A common generalization of functional equations
characterizing normed and quasi-inner-product spaces\/},
Canad. Math. Bull.
{\bf 35} (1992), 321--327.
\bibitem{fs1}
V. A. Faiziev and P. K. Sahoo,
{\it On the stability of Drygas functional equation on groups\/},
Banach J. Math. To appear in 2007.
\bibitem{fs2}
V. A. Faiziev and P. K. Sahoo,
{\it On Drygas functional equation on groups\/},
Internat. J. Appl. Math. Stat. {\bf 7} (2007), 59-69.
\bibitem{fs3}
V. A. Faiziev and P. K. Sahoo,
{\it On the stability of the quadratic functional equation on groups\/},
Bull. Belgian Math. Soc. To appear in 2007.
\bibitem{fs4}
V. A. Faiziev and P. K. Sahoo,
{\it Stability of Drygas functional equation on $T( 3, \Bbb{R} )$\/},
Internat. J. Appl. Math. Stat. {\bf 7} (2007), 70-81.
\bibitem{wf}
W. Fechner,
{\it On the Hyers-Ulam stability of functional equations
connected with additive and quadratic mappings\/},
J. Math. Anal. Appl. 322 (2006), 774-786.
\bibitem{fe}
I. Feny\H{o},
{\it On an inequality of P. W. Cholewa\/},
In: General Inequalities 5 (ed. W. Walter),
Birkhauser, Basel, 1987, pp. 277-280.
\bibitem{gav}
P. Gavruta,
{\it A generalization of the Hyers-Ulam-Rassias stability of approximately additive mappings\/},
J. Math. Anal. Appl.
{\bf 184} (1994), 169-176.
\bibitem{ger}
R. Ger,
{\it Functional inequalities stemming from stability questions\/},
In: General Inequalities 6 (ed. W. Walter),
Birkhauser, Basel, 1992, pp. 227--240.
\bibitem{hy}
D. H. Hyers,
{\it On the stability of the linear functional equation\/},
Proc. Nat. Acad. Sci. U.S.A.
{\bf 27} (1941), 222--224.
\bibitem{jung1}
S.-M. Jung,
{\it On the Hyers-Ulam stability of the functional equations that have the quadratic property\/},
J. Math. Anal. Appl.
{\bf 222} (1998), 126--137.
\bibitem{jung2}
S.-M. Jung,
{\it Stability of the quadratic equation of Pexider type\/},
Abh. Math. Sem. Univ. Hamburg
{\bf 70} (2000), 175--190.
\bibitem{jung3}
S.-M. Jung and P.K. Sahoo,
{\it Stability of a functional equation of Drygas\/},
Aequationes Math.
{\bf 64} (2002), 263--273.
\bibitem{ka}
Pl. Kannappan,
{\it Quadratic functional equation and inner product spaces\/},
Results Math.
{\bf 27} (1995), 368-372.
\bibitem{ghk}
G.-H. Kim,
{\it On the stability of the quadratic mapping in normed spaces\/},
Internat. J. Math. \& Math. Sci.
{\bf 25} (2001), 217-229.
\bibitem{ra3}
Th. M. Rassias,
{\it On the stability of the functional equations in Banach spaces\/},
J. Math. Anal. Appl.,
{\bf 251} (2000), 284--124.
\bibitem{ra2}
Th. M. Rassias,
{\it On the stability of the quadratic functional equation and its applications\/},
Studia Univ. Babes--Bolyai Math.
{\bf 43} (1998), 89--124.
\bibitem{sk}
F. Skof,
{\it Proprieta locali e approssimazione di operatori\/},
Rend. Sem. Mat. Fis. Milano
{\bf 53} (1983), 113--129.
\bibitem{ul}
S. M. Ulam,
Problems in Modern Mathematics,
Wiley, New York, 1964.
\bibitem{dy}
D. Yang,
{\it Remarks on the stability of Drygas' equation and the Pexider-quadratic equation\/},
Aequationes Math.
{\bf 68} (2004), 108-116.
\bibitem{dy1}
D. Yang,
{\it The stability of the quadratic functional equation on amenable groups\/},
J. Math. Anal. Appl.
{\bf 291} (2004), 666-672.
\end{thebibliography}
\end{document}